\tikzset{dummy/.style= {circle,fill,draw,inner sep=0pt,minimum size=1.2mm}}
\tikzset{vertex/.style={fill, circle, minimum size=.1cm, inner sep=0pt}}
\numberwithin{equation}{section} 
\numberwithin{figure}{section}
\newcommand{\newrefformat}[2]{}
\crefname{lemma}{Lemma}{Lemmas}
\crefname{theorem}{Theorem}{Theorems}
\crefname{definition}{Definition}{Definitions}
\crefname{proposition}{Proposition}{Propositions}
\crefname{remark}{Remark}{Remarks}
\crefname{corollary}{Corollary}{Corollaries}
\crefname{equation}{Equation}{Equations}
\crefname{construction}{Construction}{Constructions}
\crefname{ex}{Example}{Examples}
\crefname{appsec}{Appendix}{Appendices}
\crefname{section}{Section}{Sections}
\crefname{subsection}{Subsection}{Subsections}
\newcommand{\TT}{\mathbb{T}}
\newcommand{\abs}[1]{\lvert#1\rvert}
\newcommand{\Aut}{\operatorname{Aut}}
\newcommand{\Cat}{\mathcal Cat}
\newcommand{\cat}[1]{\mathcal{#1}}
\newcommand{\Fin}{\mathcal Fin}
\newcommand{\Fun}{\operatorname{Fun}}
\newcommand{\Hom}{\operatorname{Hom}}
\newcommand{\id}{\operatorname{id}}
\newcommand{\im}{\operatorname{im}}
\newcommand{\Lie}{\operatorname{Lie}}
\newcommand{\longto}{\longrightarrow}
\newcommand{\ob}{\operatorname{ob}}
\newcommand{\onto}{\twoheadrightarrow}
\newcommand{\ord}{\operatorname{ord}}
\renewcommand{\phi}{\varphi}
\newcommand{\Set}{\mathcal Set}
\newcommand{\set}[1]{\{ #1 \}}
\newcommand{\sSet}{s\mathcal Set}
\newcommand{\Top}{\mathcal Top}
\title{Equivariant trees and partition complexes}
\author[J.E.\ Bergner, P.\ Bonventre, M.E.\ Calle, D.\ Chan, and M.\ Sarazola]{Julia E.\ Bergner, Peter Bonventre, Maxine E.\ Calle, David Chan, and Maru Sarazola}
\address{Department of Mathematics, University of Virginia,\\ Charlottesville, VA 22904\\[5pt]
Department of Mathematics and Statistics, Georgetown University,\\ Washington, DC 20007\\[5pt]
Department of Mathematics, University of Pennsylvania,\\ Philadelphia, PA 19104\\[5pt]
Department of Mathematics, Michigan State University,\\ East Lansing, MI 48824\\[5pt]
School of Mathematics, University of Minnesota,\\ Minneapolis MN, 55455}
\keywords{partition complexes, trees, equivariant homotopy theory}
\begin{document}
\maketitle
\begin{abstract}
    We introduce two definitions of $G$-equivariant partitions of a finite $G$-set, both of which yield $G$-equivariant partition complexes. By considering suitable notions of equivariant trees, we show that $G$-equivariant partitions and $G$-trees are $G$-homotopy equivalent, generalizing existing results for the non-equivariant setting.  Along the way, we develop equivariant versions of Quillen's Theorems A and B, which are of independent interest.
\end{abstract}

\section{Introduction}

Given a finite set $\mathbf n=\{1, \ldots, n\}$, we can consider the set of partitions of $\mathbf n$, which has a partial order by coarsening.  For example, we have partitions of the set $\mathbf 4$
\[ (1 2)(3)(4) < (12)(34). \] 

Thinking of this poset as a category allows us to take its classifying space and get 
a topological space.  If we include all partitions, this space is contractible, since the discrete partition consisting of singleton sets is an initial object, and the indiscrete partition consisting of the whole set is a terminal object. Discarding these two partitions results in a poset $\mathcal P(\mathbf n)$; its classifying space $|\mathcal P(\mathbf n)|$ is called a \emph{partition complex}.  

This space is of interest in a wide variety of mathematical applications, ranging from combinatorics to algebra to topology. For instance, it has been used to study the Goodwillie derivatives of the identity functor \cite{aronemah}, \cite{ching}; its homology is intimately related to Lie (super)algebras \cite{wachs}, \cite{hanlonwachs}, \cite{barcelo}, \cite{robinson:2004}; it plays a central role in the study of bar constructions for operads \cite{ching}, \cite{fresse}; and it has applications in pure combinatorics \cite[\S 7]{Stanley}.

Robinson and Whitehouse \cite{RW96,robinson:2004} first observed that the data of a partition complex can also be encoded in a suitable category of trees.  This comparison was further developed in recent work by Heuts and Moerdijk \cite{HM:21}, with an application to operad theory.  Let us briefly summarize these results; more details and formal definitions can be found in \cref{sec:P(n) and trees}.

Let $\mathcal T(\mathbf n)$ be the category of reduced $\mathbf n$-trees of \cref{ntree def}. There are two ways to show that $\mathcal P(\mathbf n)$ and $\mathcal T(\mathbf n)$ have suitably equivalent geometric realizations, as given by the following zig-zags of topological spaces:
\[
\begin{tikzcd}[column sep = scriptsize, row sep=tiny]
    {|\mathcal P (\mathbf n)| }
    &
    {|\Delta \mathcal P (\mathbf n)| }
    \arrow[r, "\simeq"]
    \arrow[l, "\simeq"']
    &
    {|\mathcal T (\mathbf n)|}
    \\
    {|\mathcal P (\mathbf n)| }
    &
    \mathbb T(\mathbf n) \arrow[l, "\cong_{\Sigma_n}"'] \arrow[r, "\cong_{\Sigma_n}"]
    &
    {|\mathcal T (\mathbf n)|.}    
\end{tikzcd}
\]
The first zig-zag uses the category of simplices $\Delta \mathcal P(\mathbf n)$ of the nerve of $\mathcal P(\mathbf n)$, and both maps are homotopy equivalences by Quillen's Theorem A. 
The argument for why the left-hand map is a homotopy equivalence can be found in \cite{Dug:06}, while the proof for the right hand map is given by Heuts and Moerdijk \cite{HM:21}.  Our work upgrades these maps to $\Sigma_n$-equivariant homotopy equivalences.  

The second zig-zag instead uses the space $\mathbb T(\mathbf n)$ of measured $\mathbf n$-trees given in \cref{def:mathbb T n}, and the maps are $\Sigma_n$-equivariant homeomorphisms.  The proof for the left-hand map was given by Robinson \cite[Theorem 2.7]{robinson:2004}, and we give an argument for the right-hand map in \cref{trees homeo}.  Notably, the second composite homeomorphism does not arise from a map between categories or simplicial sets.

Our goal is to show that these results hold in a $G$-equivariant setting, where $G$ is a finite group. As a first step, we introduce $G$-equivariant versions of the structures involved. We find that there are several possible ways to define both $G$-equivariant partition complexes and $G$-trees, depending on ``how equivariant" we ask them to be.

Given a finite set $A$, we can encode a partition of $A$ as a surjective function $A\twoheadrightarrow\mathbf k$ for some $\mathbf k$. If $A$ is now a finite $G$-set, this notion of partition still makes sense, as we can consider surjective functions on the underlying sets. Alternatively, we can ask for a non-trivial $G$-action on the target as well. That is, we can encode a partition of $A$ as a surjective function $A\to B$ where $B$ is some finite $G$-set, and either ask that the surjective map be equivariant or not. These distinctions are summarized in \cref{tab:partition cpxs}, and more details can be found in \cref{P(A) section}.

\begin{center}
{\renewcommand{\arraystretch}{1}
    \begin{table}[h!]
        \centering
        \begin{tabular}{c||c|c|c}
        & \multicolumn{3}{c}{Less equivariant \hfill More equivariant} \\
        \hline
        \hline
             \multirow{2}{*}{Partitions} & $A \twoheadrightarrow \mathbf k$ & $A \twoheadrightarrow B$ & $A \twoheadrightarrow B$  \\
             & non-equivariant & non-equivariant & equivariant\\
             \hline
             Partition complex & $\abs{\mathcal P(A)}$ & $\abs{\mathcal P_G(A)}$ & $\abs{\mathcal P^G(A)}$ \\
              \hline
        \end{tabular}
        
        \caption{Equivariant partitions}
        \label[table]{tab:partition cpxs}
    \end{table}}
\end{center}
\vspace{-1cm}

There are inclusions of poset categories $\mathcal P(A)\hookrightarrow \mathcal P_G(A)\hookleftarrow \mathcal P^G(A)$; however, it turns out that the two extreme cases, $\mathcal P(A)$ and $\mathcal P^G(A)$, have the most interesting connections to trees. The relevant types of $G$-trees are summarized in \cref{tab:equivariant trees}; see \cref{sec:G trees} for definitions and details.

\begin{center}
{\renewcommand{\arraystretch}{1}
    \begin{table}[h!]
        \centering
        \begin{tabular}{c||c|c}
        & \multicolumn{2}{c}{Less equivariant \hfill More equivariant} \\
        \hline
        \hline
             \multirow{2}{*}{Trees} & $A$-labeled trees with & $A$-labeled trees with  \\
             & $G$-action on leaves & $G$-action on entire tree\\
             \hline
             Category of trees & $\mathcal T(A)$ & $\mathcal T^G(A)$ \\
             \hline
             Space of trees & $\mathbb T(A)$ & $\mathbb T^G(A)$ \\
              \hline
        \end{tabular}
 
       \caption{Equivariant trees}
        \label[table]{tab:equivariant trees}
    \end{table}}
\end{center}
Each of these notions of $G$-trees has the expected interaction with the corresponding notion of partition; we thus obtain two different equivariant analogues of the zig-zags of equivalences above.  To prove these results, we develop equivariant versions of Quillen's Theorems A and B (\cref{thm:A,thm:B}), which we consider of independent interest.

\begin{theorem}[\cref{cor:zigzag for PA}, \cref{trees homeo}, \cref{thm:PGA and TGA}]  \label[theorem]{thm:main intro}
There are $G$-equivariant zig-zags of $G$-spaces
\[ \begin{tikzcd}[column sep = scriptsize, row sep=tiny]
    {|\mathcal P (A)| }
    &
    {|\Delta \mathcal P (A)| }
    \arrow[r, "\simeq_G"]
    \arrow[l, "\simeq_G"']
    &
    {|\mathcal T (A)|}
    \\
    {|\mathcal P (A)| }
    &
    \mathbb T(A) \arrow[l, "\cong_{G}"'] \arrow[r, "\cong_{G}"]
    &
    {|\mathcal T (A)|.}    
\end{tikzcd} \] 
By taking fixed points, there are analogous zig-zags for $\abs{\mathcal P^G(A)}$, $\mathbb{T}^G(A)$, and $\abs{\mathcal T^G(A)}$.
\end{theorem}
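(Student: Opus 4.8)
The plan is to establish the two zig-zags separately, since they rely on different machinery, and then deduce the fixed-point statements by a formal argument. For the first zig-zag, I would follow the strategy already outlined in the introduction for the non-equivariant case, but being careful that every map is $G$-equivariant and that the relevant comparison results hold $G$-equivariantly. The left-hand map $|\Delta\mathcal P(A)| \to |\mathcal P(A)|$ is the canonical map from the realization of the category of simplices of the nerve to the realization of the nerve itself; since $G$ acts on $\mathcal P(A)$ by functoriality (permuting the underlying set $A$), this map is automatically $G$-equivariant, and one invokes the equivariant Theorem A (\cref{thm:A}) to see its fibers are $G$-contractible, hence it is a $G$-homotopy equivalence. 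The right-hand map $|\Delta\mathcal P(A)| \to |\mathcal T(A)|$ requires identifying a suitable functor (or span of functors) between $\Delta\mathcal P(A)$ and $\mathcal T(A)$ that is $G$-equivariant — this is the content of \cref{cor:zigzag for PA} — and again applying equivariant Theorem A to its comma categories/fibers to conclude it is a $G$-homotopy equivalence.

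For the second zig-zag, the maps are honest $G$-homeomorphisms, so the work is to produce them explicitly and check equivariance. The left-hand homeomorphism $\mathbb T(A) \xrightarrow{\cong} |\mathcal P(A)|$ is Robinson's homeomorphism (the equivariant enhancement of \cite[Theorem 2.7]{robinson:2004}); since $G$ acts on $\mathbb T(A)$ and on $|\mathcal P(A)|$ through its action on $A$, and Robinson's construction is natural in the labeling set, the homeomorphism is $G$-equivariant essentially by naturality. The right-hand homeomorphism $\mathbb T(A) \xrightarrow{\cong} |\mathcal T(A)|$ is \cref{trees homeo}; again, the identification of a measured tree with a point in the realization of the poset of its contractions is natural in $A$, so the $G$-action is respected. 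Thus the second zig-zag is obtained for free once one observes that all three constructions are functorial in the finite $G$-set $A$ and the relevant equivalences/homeomorphisms are natural transformations.

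Finally, for the fixed-point statement, I would pass to $G$-fixed points throughout. For the realization of a small category $\mathcal C$ with $G$-action, $|\mathcal C|^G$ need not be the realization of the fixed-point category in general, but here one uses the explicit combinatorial descriptions: $|\mathcal P(A)|^G$ is computed via the equivariant Theorem A argument to be $G$-homotopy equivalent to $|\mathcal P^G(A)|$, and likewise $|\mathcal T(A)|^G \simeq |\mathcal T^G(A)|$, with $\mathbb T(A)^G = \mathbb T^G(A)$ on the nose since the latter is defined precisely as the $G$-fixed measured trees. Alternatively — and this is cleaner — since each map in the two zig-zags is either a $G$-homotopy equivalence or a $G$-homeomorphism, applying the fixed-point functor $(-)^G$ (which preserves homeomorphisms and sends $G$-homotopy equivalences to homotopy equivalences) yields the analogous zig-zag relating $|\mathcal P(A)|^G$, $\mathbb T(A)^G$, and $|\mathcal T(A)|^G$; one then identifies these three fixed-point objects with $|\mathcal P^G(A)|$, $\mathbb T^G(A)$, and $|\mathcal T^G(A)|$ respectively, which is the content of \cref{thm:PGA and TGA} together with the definitions in \cref{P(A) section} and \cref{sec:G trees}.

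The main obstacle, I expect, is the equivariant Theorem A input: one must verify that the comparison functors in the first zig-zag have $G$-contractible (not merely contractible) fibers or comma categories, which requires checking contractibility after passing to $H$-fixed points for every subgroup $H \le G$, and this is where the combinatorial structure of equivariant partitions versus equivariant trees must be analyzed carefully. The identification of the fixed points of $|\mathcal P(A)|$ and $|\mathcal T(A)|$ with the "more equivariant" complexes $|\mathcal P^G(A)|$ and $|\mathcal T^G(A)|$ is the conceptual heart of the theorem, and it is presumably not a formal consequence of taking fixed points of a nerve but rather relies on an equivariant Theorem A comparison in its own right.
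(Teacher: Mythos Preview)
Your overall strategy matches the paper's: the first zig-zag is handled by the equivariant Theorem A (applied to the last vertex map and to the Heuts--Moerdijk collapse functor $\varphi\colon\Delta\mathcal P(A)\to\mathcal T(A)$), and the second by restricting Robinson's $\Sigma_n$-homeomorphism and the explicit $\Sigma_n$-homeomorphism $\mathbb T(\mathbf n)\to|\mathcal T(\mathbf n)|$ along the action map $\alpha\colon G\to\Sigma_n$. Your identification of the main obstacle---checking $H$-contractibility of the comma categories $(T\downarrow\varphi)^H$ for all $H\le G_T$---is exactly right, and the paper carries this out by an equivariant adaptation of the Heuts--Moerdijk cone-covering argument using $H$-layerings.

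However, your treatment of the fixed-point step contains a misconception. You write that ``$|\mathcal C|^G$ need not be the realization of the fixed-point category in general,'' and that the identification of $|\mathcal P(A)|^G$ with $|\mathcal P^G(A)|$ ``presumably \dots\ relies on an equivariant Theorem A comparison in its own right.'' In fact, for a category with strict $G$-action (the setting throughout), geometric realization \emph{does} commute with fixed points: one has $(N\mathcal C)^H = N(\mathcal C^H)$ on the nose, and $|X|^H\cong|X^H|$ for any simplicial $G$-set (this is \cref{fixed pts vs realizaton}). So $|\mathcal P(A)|^G = |\mathcal P(A)^G|$ is formal. The remaining identifications $\mathcal P(A)^G\simeq\mathcal P^G(A)$, $\mathcal T(A)^G\cong\mathcal T^G(A)$, and $\mathbb T(A)^G\cong\mathbb T^G(A)$ are direct, hands-on equivalences of categories or homeomorphisms (\cref{fixedptrel} and \cref{fixedptlemmatrees}), obtained by observing that an $H$-fixed surjection $A\twoheadrightarrow\mathbf k$ is precisely one for which $\mathbf k$ acquires an $H$-action making the map equivariant. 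No Theorem A is invoked at this step. Your ``cleaner'' alternative is therefore exactly what the paper does, but the identifications it rests on are more elementary than you anticipate.
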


There are many applications of partition complexes and trees in the literature, and we can ask which of these applications have $G$-equivariant versions. We address two of these questions here.  The first is the computation of the homotopy type of a partition complex. In the non-equivariant setting, these homotopy types are given by wedges of spheres; in contrast, the situation for $G$-partition complexes is much more subtle.

\begin{theorem}[\cref{fixedptrel}, \cref{notIsovariantContractible}, \cref{EqTreesHType}]
Let $A$ be a finite $G$-set and $\downarrow^G_H \!A$ be the restriction of $A$ to an $H$-set for $H\leq G$. Then
\[ \abs{P(A)}^H \simeq \abs{P^H(\downarrow^G_H\!A)} \] 
is non-contractible only if $\downarrow^G_H\! A\cong_G \coprod_{i=1}^n H/K$ for some $K\leq H$.
\end{theorem}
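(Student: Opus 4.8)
The plan is to reduce the statement about $H$-fixed points of $|\mathcal P(A)|$ to a statement about the $H$-equivariant partition complex of the restricted $H$-set $\downarrow^G_H\!A$, and then to analyze when such a complex can fail to be contractible. First I would invoke the identification $\abs{\mathcal P(A)}^H \simeq \abs{\mathcal P^H(\downarrow^G_H\!A)}$ (this is \cref{fixedptrel} in the excerpt, which I may assume); since fixed points only see the $H$-action, this immediately lets me replace $G$ by $H$ and assume $A$ is a finite $H$-set, and it suffices to show $\abs{\mathcal P^H(A)}$ is contractible whenever $A$ is \emph{not} $H$-isomorphic to a coproduct $\coprod_{i=1}^n H/K$ of copies of a single orbit type $H/K$.

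The key step is therefore a contractibility criterion: if $A$ decomposes nontrivially as an $H$-set — either it has two distinct orbit types, or it is empty, or in general it is not ``homogeneous'' in the orbit-type sense above — then $\mathcal P^H(A)$ has either an initial or a terminal object after a suitable argument, or more likely a retraction onto a contractible subposet. The natural candidates come from the equivariant partitions $A \onto B$ with $B$ a transitive $H$-set: when $A$ is not homogeneous, no equivariant surjection onto a \emph{single} orbit can be the indiscrete-type partition while still being proper, so one gets room to build a contracting homotopy. Concretely, I would look for an element $P_0 \in \mathcal P^H(A)$ — for instance the partition whose blocks are the orbits of $A$, or a coarsening of it — together with natural comparison maps $P \leq P \vee P_0 \geq P_0$ (join in the partition lattice) exhibiting $\mathcal P^H(A)$ as conically contractible via the standard poset-homotopy argument (a zig-zag of natural transformations between $\id$ and a constant functor implies the nerve is contractible). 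The hypothesis that $A$ is not of the form $\coprod H/K$ is exactly what guarantees that this $P_0$ is a genuine element of $\mathcal P^H(A)$, i.e.\ that it is neither the discrete nor the indiscrete $G$-partition, so the join $P \vee P_0$ never escapes the poset.

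The main obstacle, I expect, will be pinning down precisely which $P_0$ works across all the non-homogeneous cases and checking that the join with $P_0$ stays \emph{proper} — that is, that $P \vee P_0 \neq \hat 1$ for every proper $P$, and that $P_0 \neq \hat 1$ — since equivariance constrains the available partitions and the indiscrete equivariant partition can be subtle when $A$ has several orbits of the same type versus orbits of different types. I would handle the case where $A$ has at least two distinct orbit types by taking $P_0$ to be the partition grouping each orbit-type-stratum together (so $P_0$ is strictly coarser than discrete and strictly finer than indiscrete), and the degenerate cases ($A$ empty, or $A$ a single orbit, where $\mathcal P^H(A)$ may itself be empty or the statement is vacuous) separately. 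Finally I would note that this only proves the ``only if'' direction claimed; the homotopy type in the remaining homogeneous case $\coprod_{i=1}^n H/K$ is the content of the companion results \cref{notIsovariantContractible} and \cref{EqTreesHType}, and need not be reproved here.
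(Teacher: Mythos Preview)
Your reduction via $\abs{\mathcal P(A)}^H\simeq\abs{\mathcal P^H(\downarrow^G_H A)}$ is correct and matches the paper, so the question is indeed whether $\abs{\mathcal P^H(A)}$ is contractible for a non-isovariant $H$-set $A$. The gap is in the conical-contractibility step: no single $P_0$ will work, because $P\vee P_0$ escapes the poset for partitions $P\colon A\twoheadrightarrow B$ with $B$ a \emph{single} $H$-orbit. For either of your candidates --- the orbit partition $A\to A/H$, or the coarser orbit-type-stratum partition --- the target has trivial $H$-action, and one computes that $P\vee P_0$ is then represented by $A\to B/H$; when $B$ is transitive this is $\hat 1$. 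Concretely, take $H=C_4$ and $A=C_4\amalg C_4/C_2$: this $A$ has two distinct orbit types and is not isovariant, yet the quotient $P\colon A\twoheadrightarrow C_4/C_2$ is a proper equivariant partition with $P\vee P_0=\hat 1$. So the zig-zag $P\leq P\vee P_0\geq P_0$ is not available on all of $\mathcal P^H(A)$, and the obstacle you flagged is not merely a matter of choosing $P_0$ more carefully.

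The paper's argument isolates exactly these problematic partitions. It first shows that the full subposet $\mathcal P^H_2(A)$ of partitions whose target has at least two orbits is contractible --- and here your join idea \emph{does} work, since then $\abs{B/H}\geq 2$. The substantive step is proving that the inclusion $I\colon\mathcal P^H_2(A)\hookrightarrow \mathcal P^H(A)$ is a homotopy equivalence, done via Quillen's Theorem~A: for a transitive-target partition $f\colon A\twoheadrightarrow H/K$, one identifies the slice $I\downarrow f$ (using the equivalence $K\Fin\simeq H\Fin\downarrow(H/K)$) with $\mathcal P^K_2(A')$ for the $K$-set $A'$ satisfying $\uparrow_K^H A'\cong A$; the non-isovariance of $A$ passes to $A'$, so this slice is contractible by the first step. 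Your sketch does not contain this slice-category identification, and something of that nature appears unavoidable for handling the transitive-target partitions.
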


This question was also addressed by Arone and Brantner \cite{AroneBrantner}, and some of our results in \cref{sec:PA homotopy} recover some of theirs, although with different proofs.  
The second question is the computation of the homology groups of spaces of trees.  In the classical setting, Robinson \cite{robinson:2004} showed that these groups are related to the Lie algebra operad via a twisted action of the integral sign representation of $\Sigma_n$.  We obtain an analogous result for our ``less equivariant'' $G$-trees by considering the integral sign representation of $G$.

\begin{theorem}[\cref{Homology of T(A)}]
There is an isomorphism of $G$-modules
\[ H^{n-3}(\mathbb T(A)) \cong \varepsilon_A^G \otimes \Lie_A, \]
where $\varepsilon_A^G$ is the sign representation of $G$ induced by the action on $A$.
\end{theorem}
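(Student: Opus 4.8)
The plan is to deduce this from Robinson's non-equivariant computation by functoriality and restriction along the structure homomorphism of the $G$-set $A$. Write $n=\abs{A}$. For a finite set $S$, the space of measured $S$-trees is functorial in $S$ with respect to bijections (relabeling leaves), and for a finite $G$-set $A$ the $G$-action on $\mathbb{T}(A)$ is, by construction, the one obtained by relabeling leaves along the structure homomorphism $\rho_A\colon G\to\Sigma_A=\Aut(A)$. Consequently $H^{n-3}(\mathbb{T}(A))$, as a $G$-module, is the restriction along $\rho_A$ of the $\Sigma_A$-module $H^{n-3}(\mathbb{T}(A))$, and it suffices to identify the latter as an $\Aut(A)$-representation, naturally in $A$. (Equivalently, one may pass through the $G$-equivariant homeomorphism $\mathbb{T}(A)\cong_G\abs{\mathcal P(A)}$ of \cref{thm:main intro}, after which the problem becomes that of computing the top cohomology of the partition complex; note that the $G$-action on $\abs{\mathcal P(A)}$ also factors through $\rho_A$, acting on the poset $\mathcal P(A)$ by relabeling, and that $\abs{\mathcal P(A)}$ is homotopy equivalent to a wedge of $(n-1)!$ copies of $S^{n-3}$, so its reduced cohomology is free abelian and concentrated in degree $n-3$.)

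The second step is to invoke the classical identification of this representation. Robinson's theorem \cite{robinson:2004} (see also \cite{RW96}), equivalently the computation of the top homology of the partition lattice due to Joyal, Klyachko, and Stanley as surveyed in \cite{wachs}, provides an isomorphism of $\Sigma_n$-modules
\[ H^{n-3}(\mathbb{T}(\mathbf n))\;\cong\;\varepsilon_n\otimes\Lie_n, \]
where $\varepsilon_n$ is the sign character of $\Sigma_n$ and $\Lie_n=\Lie(\mathbf n)$ the arity-$n$ component of the Lie operad; this is precisely the case $G=\Sigma_n$, $A=\mathbf n$ of the present theorem. Since both $A\mapsto H^{n-3}(\mathbb{T}(A))$ and $A\mapsto\varepsilon_A\otimes\Lie_A$ are functors on the groupoid of finite sets and the comparison is built from natural chain-level maps, this isomorphism can be chosen natural in $A$, hence $\Aut(A)$-equivariant: for every finite set $A$ one has $H^{n-3}(\mathbb{T}(A))\cong\varepsilon_A\otimes\Lie_A$ as $\Aut(A)$-modules, where now $\varepsilon_A$ denotes the sign character of $\Aut(A)$ and $\Lie_A=\Lie(A)$.

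Restricting this isomorphism along $\rho_A$ completes the argument: the character $\varepsilon_A$ pulls back to the sign representation $\varepsilon_A^G$ of $G$ induced by its action on $A$, the module $\Lie(A)$ pulls back to the $G$-module $\Lie_A$, and combining with the first paragraph yields the desired isomorphism of $G$-modules $H^{n-3}(\mathbb{T}(A))\cong\varepsilon_A^G\otimes\Lie_A$. I expect the only delicate points to be bookkeeping rather than substance. First, one must be sure the non-equivariant isomorphism really is natural in $A$ — so that it transports the $\Aut(A)$-action and not merely the abstract group structure — which is standard but should be cited or checked with care. Second, one must keep track of the homology/cohomology duality and the sign twist: passing between $H_{n-3}$ and $H^{n-3}$ (the groups being free of finite rank), and between the tree model and the partition complex, should introduce no spurious dualization of $\Lie_A$ or extra sign. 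Over $\ZZ$ this rests on self-duality properties already handled in the non-equivariant literature; everything else is formal.
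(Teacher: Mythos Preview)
Your proposal is correct and follows essentially the same route as the paper: restrict Robinson's $\Sigma_n$-module isomorphism $H^{n-3}(\mathbb T(\mathbf n))\cong\varepsilon^{\Sigma_n}\otimes\Lie_n$ along the structure homomorphism $\alpha\colon G\to\Sigma_n$, using that $\mathbb T(A)\cong_G\alpha^*\mathbb T(\mathbf n)$ and that $\Lie_A\cong\alpha^*\Lie_n$, $\varepsilon^G_A\cong\alpha^*\varepsilon^{\Sigma_n}$. Your closing worries are not genuine obstacles: you do not need naturality in the variable $A$, only that Robinson's isomorphism is $\Sigma_n$-equivariant, which is exactly how it is stated; and the passage through the partition complex, while harmless, is unnecessary since the paper works directly with $\mathbb T$.
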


In \cref{G tree homology} we explore the homology of our space of ``fully equivariant'' $G$-trees in relation to the homotopy type of their corresponding partitions.

\subsection*{Outline of the paper} 
In Section \ref{sec:P(n) and trees}, we summarize the non-equivariant comparison between partition complexes and trees, and in Section \ref{sec:eq back} we review some of the equivariant homotopy theory that we use.  We begin the equivariant story in Section \ref{sec:equivar partitions} by defining equivariant partition complexes, and we analogously define equivariant trees in Section \ref{sec:G trees}, and then in Section \ref{sec:comparison} we establish \cref{thm:main intro}.  In Section \ref{sec:PA homotopy} we discuss the homotopy type of the equivariant partition complexes, and in Section \ref{sec:homology Lie} we discuss the equivariant analogues of results relating the homology of spaces of trees to Lie algebras.

\subsection*{Acknowledgements}

This project was started at the Collaborative Workshop in Algebraic Topology in August 2022, supported by the Geometry and Topology  NSF RTG grant DMS-1839968 at University of Virginia.  We would like to thank the other participants of this workshop for an enjoyable and productive week, and the hosts at the workshop site for their hospitality. We also thank the referee for their helpful comments which improved the paper, as well as David Barnes for a helpful conversation that resulted in the addition of \cref{ex:P of C4/e}. The first-named author was partially supported by NSF grant DMS-1906281. The third-named author was supported by NSF GRFP grant DGE-1845298. The fourth-named author was partially supported by NSF grant DMS-2104300. The fifth-named author was partially supported by NSF grant DMS-2506116.

\section{A review of partition complexes and trees} \label[section]{sec:P(n) and trees}

In this section we review partition complexes, categories of trees, and the relationship between them in the non-equivariant setting.  We begin with partition complexes.  First, let us fix a finite set $\mathbf n = \{1, \ldots, n\}$ and consider the poset category $\mathcal P(\mathbf n)$ of non-trivial partitions of $\mathbf n$, ordered by coarsening, where we omit the discrete and indiscrete partitions. To turn this category into a topological space, we use the classifying space construction.  

\begin{definition} \label[definition]{defn:nerve} 
 The \emph{nerve} of a category $\mathcal C$, denoted by $N\mathcal C$, is the simplicial set whose $n$-simplices are given by functors $[n] \rightarrow \mathcal C$, where $[n]$ denotes the category with $n$ composable arrows. The \emph{classifying space} of $\mathcal C$ is the geometric realization of the nerve,
\[ \abs{\mathcal C}:= \abs{N\mathcal C}. \]
\end{definition}

\begin{definition}\label[definition]{defn:partitioncpx}
The \emph{partition complex} of $\mathbf n$ is the classifying space $\abs{\mathcal P(\mathbf n)}$ of $\mathcal P(\mathbf n)$.
\end{definition}

\begin{remark} \label[remark]{coarsening remark}
Other authors, including Heuts and Moerdijk  \cite{HM:21}, use the refinement relation on $\mathcal P(\mathbf n)$ instead.  We have chosen to use coarsening since it generalizes more conveniently to the equivariant setting in \cref{sec:equivar partitions}. Ultimately, the choice does not matter on the level of classifying spaces.
\end{remark}

\begin{definition}
For any category $\mathcal C$, the \emph{category of simplices} is the overcategory $\Delta \mathcal C := \Delta^{(-)} \downarrow N\mathcal C$. Explicitly, the objects are the $k$-simplices of the nerve, i.e. length $k$ chains of arrows in $\mathcal C$, and morphisms are generated by face and degeneracy maps.
\end{definition}

There is a functor $\Delta \mathcal C \to \mathcal C$ that sends a chain of arrows to its ultimate target, called the \emph{last vertex} functor. Using the discussion preceding Theorem 2.4 in \cite{Dug:06}, the last vertex map is homotopy initial (\cref{htpy initial def}) and hence by Quillen's Theorem A induces a homotopy equivalence on classifying spaces. It follows that $|\Delta \mathcal P(\mathbf n)|$ is another model for the partition complex.

We now introduce several varieties of trees, studied in \cite{robinson:2004} and \cite{HM:21}, that connect to the partition complex. By a \emph{tree}, we always mean a finite tree whose internal edges are attached to a vertex at both ends, but whose external edges are only attached to a single vertex. One external edge is distinguished as the \emph{root} of the tree, and the other external edges are called \emph{leaves}. The tree is oriented from the leaves down to the root.  Additionally, our trees are prohibited from having nullary vertices; see \cref{layeredtrees ex}. 

\begin{notation}
    For a tree $T$, we denote by $L(T)$, $V(T)$, and $E^i(T)$ the sets of leaves, vertices, and inner edges of $T$, respectively.
\end{notation}

\begin{definition}  \label[definition]{ntree def}
    For any $n > 0$, an \emph{$n$-labeled tree}, or simply \emph{$\mathbf{n}$-tree}, is a tree equipped with a labeling bijection $\mathbf n \to L(T)$. We say an $\mathbf{n}$-tree is
    \begin{itemize}
    \item \emph{layered} if there is a constant number of inner edges between any leaf and the root;
    
    \item \emph{reduced} if there are no unary vertices; and 
    
    \item \emph{measured} if it is equipped with the data of an assignment $E^i(T) \to (0,1]$ giving every inner edge a length in $(0,1]$, such that at least one inner edge has length 1. 
    \end{itemize}
    An \emph{isomorphism} of (reduced) $\mathbf{n}$-trees is a root-preserving homeomorphism. It is an isomorphism of labeled trees if it also preserves the labels, and an isomorphism of measured trees if it preserves edge measurements.
\end{definition}

\begin{remark}
What we call ``reduced $\mathbf{n}$-trees" are called ``$\mathbf{n}$-trees" in \cite{HM:21} and \cite{robinson:2004}.  Robinson uses the term ``fully grown $\mathbf{n}$-trees" for what we call ``measured $\mathbf{n}$-trees". 
\end{remark}

Let us look at these different kinds of trees in more depth.  First, we observe that the category of simplices $\Delta \mathcal P(\mathbf n)$ is isomorphic to the category of (isomorphism classes of) layered $\mathbf{n}$-trees, with face maps contracting an entire layer and degeneracy maps inserting a layer of unary edges.

\begin{example}  \label[example]{layeredtrees ex}
Let $\mathbf n = \mathbf 6$ and consider the $2$-simplex in $N\mathcal P(\mathbf{6})$
\[ (1)(2)(34)(5)(6) \leq (12)(34)(56) \leq (12)(3456) .\] 
This chain of partitions corresponds to the layered tree with 3 internal layers: 
\[ \scalebox{0.85}{ \begin{tikzpicture} 
[level distance=10mm, 
every node/.style={fill, circle, minimum size=.1cm, inner sep=0pt}, 
level 1/.style={sibling distance=20mm}, 
level 2/.style={sibling distance=20mm}, 
level 3/.style={sibling distance=14mm},
level 4/.style={sibling distance=7mm}]

\node (tree)[style={color=white}] {} [grow'=up] 
child {node (level1) {} 
	child{ node {}
		child{ node {}
			child{ node (level4) {}
			child}
			child{ node (level4) {}
			child}
		}
	}
	child{ node (level2) {}
		child{ node (level3) {}
			child{ node (level4) {}
			child
			child}
		}
		child{ node {}
			child{ node (level4) {}
			child}
			child{ node (level4) {}
			child}
		}
	}
};

\tikzstyle{every node}=[]

\draw[dashed] ($(level1) + (-2cm, .5cm)$) -- ($(level1) + (2.5cm, .5cm)$);
\draw[dashed] ($(level1) + (-2cm, 1.5cm)$) -- ($(level1) + (2.5cm, 1.5cm)$);
\draw[dashed] ($(level1) + (-2cm, 2.5cm)$) -- ($(level1) + (2.5cm, 2.5cm)$);

\node at ($(level1) + (-2.5cm, 2.5cm)$) {$0$};
\node at ($(level1) + (-2.5cm, 1.5cm)$) {$1$};
\node at ($(level1) + (-2.5cm, .5cm)$) {$2$};

\node at ($(level4) + (-3.4cm, 1.3cm)$){$1$};
\node at ($(level4) + (-2.7cm, 1.3cm)$){$2$};
\node at ($(level4) + (-2.1cm, 1.3cm)$){$3$};
\node at ($(level4) + (-1.4cm, 1.3cm)$){$4$};
\node at ($(level4) + (-0.7cm, 1.3cm)$){$5$};
\node at ($(level4) + (0cm, 1.3cm)$){$6$};

\end{tikzpicture}}. \]
Here, layer 0 corresponds to $(1)(2)(34)(5)(6)$, layer 1 to $(12)(34)(56)$, and layer 2 to $(12)(3456)$. The face map $d_0$ contracts the 0-th layer, i.e. all the edges that intersect with the dashed line labeled by $0$, resulting in the tree

\[\scalebox{0.85}{ 
\begin{tikzpicture} 
[level distance=10mm, 
every node/.style={fill, circle, minimum size=.1cm, inner sep=0pt}, 
level 1/.style={sibling distance=20mm}, 
level 2/.style={sibling distance=20mm}, 
level 3/.style={sibling distance=14mm},
level 4/.style={sibling distance=7mm}]

\node (tree)[style={color=white}] {} [grow'=up] 
child {node (level1) {} 
	child{ node {}
		child{ node (level 3) {}
			child
			child
		}
	}
	child{ node (level2) {}
		child{ node (level3) {}
			child
			child
		}
		child{ node {}
			child
			child
		}
	}
};

\tikzstyle{every node}=[]

\draw[dashed] ($(level1) + (-2cm, .5cm)$) -- ($(level1) + (2.5cm, .5cm)$);
\draw[dashed] ($(level1) + (-2cm, 1.5cm)$) -- ($(level1) + (2.5cm, 1.5cm)$);

\node at ($(level1) + (-2.5cm, 1.5cm)$) {$0$};
\node at ($(level1) + (-2.5cm, .5cm)$) {$1$};

\node at ($(level3) + (-1.7cm, 1.3cm)$){$1$};
\node at ($(level3) + (-0.9cm, 1.3cm)$){$2$};
\node at ($(level3) + (-0.4cm, 1.3cm)$){$3$};
\node at ($(level3) + (0.4cm, 1.3cm)$){$4$};
\node at ($(level3) + (1cm, 1.3cm)$){$5$};
\node at ($(level3) + (1.8cm, 1.3cm)$){$6$};

\end{tikzpicture} } \]
that corresponds to the chain $(12)(34)(56)\leq (12)(3456)$.  We leave it to the reader to compute the other face maps as well as the degeneracy maps.
\end{example}

We say a layered tree is \emph{non-degenerate} if its associated simplex is. Visually, this condition means that there is no layer whose vertices are all unary. Additionally, a layered tree is \emph{elementary} if every layer contains exactly one non-unary vertex.  A vertex is in a layer if it is the source of an edge in the layer. Both examples above are non-degenerate, but neither is elementary.

\begin{remark}
The exclusion of the trivial partitions in $\mathcal P(\mathbf n)$ imposes restrictions on what a layered tree can look like before the first layer and after the final layer. Specifically, excluding the coarsest partition means we do not allow the layer closest to the root in any $k$-simplex to be degenerate (depicted below left), and and excluding the finest partition means we do not allow the 0-th layer to be degenerate (depicted below right):
\[ \scalebox{0.85}{ \begin{tikzpicture}
[level distance=10mm, 
every node/.style={fill, circle, minimum size=.1cm, inner sep=0pt}, 
level 1/.style={sibling distance=20mm}, 
level 2/.style={sibling distance=20mm}, 
level 3/.style={sibling distance=14mm},
level 4/.style={sibling distance=7mm}]

\node (tree)[style={color=white}] {} [grow'=up] 
child {node (level1) {} 
	child
};
\tikzstyle{every node}=[]

\draw[dashed] ($(level1) + (-2cm, .5cm)$) -- ($(level1) + (2.5cm, .5cm)$);

\node at ($(level1) + (-2.5cm, .5cm)$) {$k$};

\end{tikzpicture} }
\hspace{1.5cm}
 \scalebox{0.85}{ \begin{tikzpicture}
[level distance=10mm, 
every node/.style={fill, circle, minimum size=.1cm, inner sep=0pt}, 
level 1/.style={sibling distance=20mm}, 
level 2/.style={sibling distance=20mm}, 
level 3/.style={sibling distance=14mm},
level 4/.style={sibling distance=7mm}]

\node at (-1, 0) (tree)[style={color=white}] {} [grow'=up] 
child {node (level2) {} 
child
};

\node at (0, 0) (tree)[style={color=white}] {} [grow'=up] 
child {node (level2) {} 
child
};

\node at (1.5, 0) (tree)[style={color=white}] {} [grow'=up] 
child {node (level2) {} 
child
};

\tikzstyle{every node}=[]

\draw[dashed] ($(level1) + (-2cm, -.5cm)$) -- ($(level1) + (2.5cm, -.5cm)$);
\node at (0.75, 1) {$\dots$};

\node at ($(level1) + (-2.5cm, -0.5cm)$) {$0$};

\node at ($(level1) + (-1cm, 1.3cm)$){$1$};
\node at ($(level1) + (0cm, 1.3cm)$){$2$};
\node at ($(level1) + (0.75cm, 1.3cm)$){$\dots $};
\node at ($(level1) + (1.5cm, 1.3cm)$){$n$};
\end{tikzpicture} }.
\] 
\end{remark}

\begin{definition}
Let $\mathcal T(\mathbf n)$ denote the poset whose objects are isomorphism classes of reduced $\mathbf{n}$-trees, where there is a unique morphism $T\to T'$ if $T'$ can be obtained from $T$ by contracting a collection of inner edges.  In this case we say $T'$ is a \emph{face} of $T$, as illustrated by the following picture:
\[\scalebox{0.85}{  \begin{tikzpicture} 
[level distance=10mm, 
every node/.style={fill, circle, minimum size=.1cm, inner sep=0pt}, 
level 1/.style={sibling distance=20mm}, 
level 2/.style={sibling distance=20mm}, 
level 3/.style={sibling distance=14mm},
level 4/.style={sibling distance=7mm}]

\node (tree) at (-3,0) [style={color=white}] {} [grow'=up] 
child {node (level1) {} 
	child{ node {}
		child
		child
	}
	child{ node (level2) {}
		child
		child
	}
};

\node[fill=white, label=$\longrightarrow$] at (-0.5,1) {};

\node (tree) at (3,0) [style={color=white}] {} [grow'=up] 
child {node (level2) {} 
	child{ node (level3) {}
		child
		child
	}
	child
	child
};

\tikzstyle{every node}=[]
\end{tikzpicture}}.
\]
This category has an terminal object, the \textit{corolla} $C_{\mathbf n}$, 
\[ \scalebox{0.85}{ \begin{tikzpicture}
[level distance=10mm, 
every node/.style={fill, circle, minimum size=.1cm, inner sep=0pt}, 
level 1/.style={sibling distance=20mm}, 
level 2/.style={sibling distance=20mm}, 
level 3/.style={sibling distance=14mm},
level 4/.style={sibling distance=7mm}]

\node (tree)[style={color=white}] {} [grow'=up] 
child {node (level1) {} 
	child
	child
	child
	child
};

\tikzstyle{every node}=[]

\node at ($(level1) + (-3cm, 1.3cm)$){$1$};
\node at ($(level1) + (-1cm, 1.3cm)$){$2$};
\node at ($(level1) + (0cm, 1.3cm)$){$\dots $};
\node at ($(level1) + (1.2cm, 1.3cm)$){$n-1$};
\node at ($(level1) + (3cm, 1.3cm)$){$n$};

\node at ($(level1) + (0cm, 0.5cm)$){$\dots$};
			\end{tikzpicture}} \] 
but for the rest of this paper we omit this object from $\mathcal{T}(\mathbf n)$.
\end{definition}

\begin{remark}
    The category $\mathcal T(\mathbf n)$ is the \emph{opposite} of the category denoted by $\mathcal T_+(n)$ in \cite{HM:21}, due to our choice of ordering $\mathcal P(\mathbf n)$ via coarsening; see \cref{coarsening remark}. Hence, these arrows are the opposite of the maps of trees in the dendroidal category $\Omega$.
\end{remark}

In \cite{HM:21}, Heuts and Moerdijk show that the functor 
\[ \begin{tikzcd}
\Delta \mathcal P(\mathbf n) \ar[r]
& \mathcal T(\mathbf n)
\end{tikzcd} \] 
that collapses unary vertices and forgets layers is homotopy final (\cref{htpy initial def}), and so again induces a homotopy equivalence $\abs{ \mathcal P(\mathbf n)} \simeq \abs{ \mathcal T(\mathbf n)}$ on classifying spaces. 

There are several rules governing the behavior of edges and leaves in a reduced tree, as well as the impact of a morphism $T\to T'$ in $\mathcal T(\mathbf n)$ on the sets $E^i(T)$ and $E^i(T')$ of inner edges. We now establish some technical results in this direction that will be useful for our goals in \cref{sec:comparison}.

\begin{definition}
    Given two edges $e$ and $f$ in a tree $T$, we say that $e\leq f$ if every path in $T$ from a point in $e$ to a point in the root must pass through $f$. Similarly, for a vertex $v$ of $T$, we say $e\leq v$ if every path in $T$ from a point in $e$ to a point in the root must pass through $v$.
\end{definition}

It is straightforward to check that this relation turns the set of edges in $T$ into a poset with maximal element given by the root.  It is often convenient also to extend this relation to the set of edges and vertices.  

For any edge $e$ we write $\Lambda(e)$ for the set of leaves $\ell\leq e$.  Similarly, for a fixed vertex $v$, we denote by $\Lambda(v)$ the set of leaves $\ell\leq v$. 

\begin{lemma}\label[lemma]{lemma: inclusion of leaf sets}
    If $e\leq f$ then there is an inclusion $\Lambda(e)\subseteq \Lambda(f)$.  If $e$ is strictly less than $f$ then this inclusion is also strict.
\end{lemma}

\begin{proof}
    The first claim follows from transitivity of the poset relation: if $\ell \leq e$ and $e\leq f$ then $\ell\leq f$.  For the second claim, suppose that $e<f$ and let $v$ be the outgoing vertex of $e$.  Since $\Lambda(e)\subseteq\Lambda(v)\subseteq \Lambda(f)$, it suffices to prove that the inclusion $\Lambda(e)\subseteq \Lambda(v)$ is strict.  Since the tree $T$ is reduced, there must be some edge $g\neq e$ which is incoming to $v$.  But then $\Lambda(g)$ is a non-empty subset of $\Lambda(v)$ which is disjoint from $\Lambda(e)$, which proves that $\Lambda(e)\neq \Lambda(v)$.   
\end{proof}

\begin{proposition}\label[proposition]{prop: edges determined by leaves}
    An edge $e$ in a reduced $\mathbf n$-tree is uniquely determined, up to label-preserving isomorphism, by the set of leaves $\ell$ such that $\ell\leq e$.
\end{proposition}

\begin{proof}
    It suffices to prove that if $e$ and $f$ are two distinct edges of a reduced $\mathbf{n}$-tree $T$, then there is some leaf $\ell$ such that $\ell\leq e$ but $\ell\nleq f$, or vice versa.  Let $v$ be the least element of the poset $V(T)$ that is greater than both $e$ and $f$.  Such a least element exists because the subset of $V(T)$ consisting of elements greater than both $e$ and $f$ is a subset of the linearly ordered finite set of vertices larger than $e$.  Let $e'$ and $f'$ be the unique incoming edges of $v$ such that $e'\geq e$ and $f'\geq f$. If $f'\neq e'$ then $\Lambda(f')\cap \Lambda(e') = \varnothing$.  
    
    If $f'=e'$ then we must have that either $e=e'$ or $f=f'$, as otherwise the vertex directly above $e'=f'$ is greater than both $e$ and $f$ and strictly less than $v$, contradicting the minimality of $v$.  Without loss of generality, assume that $e=e'$. Then we must have $f\neq e'$, since $e\neq f$, and thus $f<e$ which implies that the containment $\Lambda(f)\subseteq \Lambda(e)$ is strict by \cref{lemma: inclusion of leaf sets}.  Thus, there is some $\ell\geq e$ but $\ell\nleq f$, as claimed.
\end{proof}

\begin{corollary}\label[corollary]{cor:inclusionofinneredges}
    If $T\to T'$ is a morphism in $\mathcal T(\mathbf n)$, then there is a canonical inclusion $E^i(T')\hookrightarrow E^i(T)$.
\end{corollary}

It is perhaps worth noting that this corollary is nontrivial, as the data of a morphism $T\to T'$ is simply the fact that $T'$ can be obtained from $T$ by a contraction of edges, and does not contain the information of which edges are contracted, or in which order.  Moreover, it is important that the leaves of $T$ and the leaves of $T'$ can be canonically identified via the labeling by $\mathbf{n}$; the analogous claim for unlabeled trees is false.

\begin{proof}
    Given an edge $e\in E^i(T')$, we claim there is an edge $\tilde{e}\in E^i(T)$ such that $\Lambda(e) = \Lambda(\tilde{e})$.  Since such an $\tilde{e}$ is necessarily unique by \cref{prop: edges determined by leaves}, it defines a canonical map $E^i(T')\to E^i(T)$. To prove the claim, we make a choice of inner edges in $T$ that can be contracted to form $T'$.  Independently of this choice, there is at least one edge $\tilde{e}$ in $T$ that ``became'' $e$.  Finally, it suffices to observe that contracting inner edges does not affect the set of leaves that live over any particular edge; in particular we must have $\Lambda(\tilde{e}) = \Lambda(e)$.
\end{proof}

Finally, we discuss the space of measured $\mathbf{n}$-trees, following \cite{RW96} and \cite{robinson:2004}.

\begin{definition}\label[definition]{def:mathbb T n}
We denote the space of (isomorphism classes of) measured $\mathbf n$-trees by $\mathbb{T}(\mathbf n)$.  It is defined as the simplicial complex whose vertices are the measured $\mathbf n$-trees with exactly one inner edge. A $k$-simplex of $\mathbb T(\mathbf n)$ corresponds to the shape of a  measured tree with $k+1$ inner edges. The vertices of such a $k$-simplex are obtained by collapsing all but one inner edge that is assigned weight $1$.
\end{definition}

In particular, given a tree shape $S$, the vertices of the simplex corresponding to $S$ are in bijection with the inner edges of $S$.  More general points in a simplex consist of measured $\mathbf n$-trees of shape $S$; in other words, we assign lengths to all inner edges, and these lengths determine the barycentric coordinates of the point.  More explicitly, if $e\in S$ is an inner edge, the $e$-th barycentric coordinate of a measured tree $T$ in the simplex $S$ is the length of the edge $e$ in $T$, divided by the sum of the lengths of all internal edges of $T$. 
A proof that the space $\mathbb{T}(n)$ defined above is actually a simplicial complex can be found in \cite[Proposition 1.2]{RW96}.

Our definition agrees with the one in \cite[\S 1]{RW96} and \cite[\S 2]{robinson:2004}, used to produce an explicit 

homeomorphism $\TT(\mathbf n)\to \abs{\mathcal P(\mathbf n)}$ in \cite[Theorem 2.7]{robinson:2004}. Moreover, a similar argument shows there is a homeomorphism $\TT(\mathbf n)\to \abs{\mathcal T(\mathbf n)}$ as well; see \cref{trees homeo}. All together, we have a zig-zag of homeomorphisms
\[ \begin{tikzcd}
\abs{\mathcal P(\mathbf n)} & \mathbf \TT(\mathbf n) \ar[r]\ar[l]& \abs{\mathcal T(\mathbf n)}
\end{tikzcd} \] 
that does not appear to arise from any functors between these categories. In \cite{robinson:2004}, Robinson shows that $\TT(\mathbf n)$ is a simplicial complex with the homotopy type of a wedge of spheres and studies connections between measured $\mathbf{n}$-trees and Lie representations. 

\section{Background on equivariant homotopy theory} \label[section]{sec:eq back}

In equivariant homotopy theory, we consider familiar objects like sets, spaces, or categories, except now we give these objects the extra structure of a group action. Throughout this paper, we assume the group $G$ is finite.

This idea of equipping an object in a category $\mathcal C$ with a $G$-action is nicely encapsulated by a functor from the one-object groupoid whose morphism group is $G$. We use $BG$ to refer to both the one-object category and the classifying space of this category. 

\begin{definition}
A \emph{$\mathcal C$-object with $G$-action} is an object of $\Fun(BG, \mathcal C)$, the category of functors from $BG$ to $\mathcal C$. \emph{Equivariant morphisms}, or simply $G$-\emph{morphisms}, are natural transformations of these functors, and we often denote the resulting category by $G\mathcal C$. 
\end{definition}

We primarily consider the following examples.
\begin{itemize}
    \item For $\mathcal C=\Set$ the category of sets, a \emph{$G$-set} is a set $A$ together with a $G$-action map $G\times A\to A$ so that $e\cdot a=a$ and $(gg')\cdot a = g\cdot (g'\cdot a)$ for all $a\in A$ and $g,g'\in G$. A $G$-\emph{map} of $G$-sets is a set map $f\colon A\to A'$ so that $g\cdot f(a) = f(g\cdot a)$ for all $a\in A$ and $g\in G$. The category of $G$-sets is denoted by $G\Set$. We can also restrict to $\cat C=\Fin$, the category of finite sets, to get a category of finite $G$-sets, denoted by $G\Fin$.
    
    \item For $\mathcal C = \Top$ the category of compactly generated weak Hausdorff spaces, a \emph{$G$-space} is a space $X$ along with a continuous map $G\times X\to X$, where $G$ is given the discrete topology. A $G$-\emph{map} of $G$-spaces is a continuous map which is equivariant on underlying sets. The category of $G$-spaces is denoted by $G\Top$.
    
    \item For $\mathcal C=\Cat$ the category of small categories, a \emph{category with $G$-action} is a category $\mathcal D$ with action functors $(g\cdot)\colon \mathcal D\to \mathcal D$ for each $g\in G$ so that $(e\cdot)=\id_{\mathcal D}$ and $(g\cdot )\circ (g'\cdot ) = (gg'\cdot)$. A $G$-\emph{functor} is a functor $F\colon \mathcal D\to \mathcal D'$ so that $g\cdot F(d) = F(g\cdot d)$ and $g\cdot F(f) = F(g\cdot f)$ for all objects $d$ of $\mathcal D$, all morphisms $f$ of $\mathcal D$, and all $g\in G$. This data assembles into a category, denoted by $G\Cat$.
\end{itemize}

\begin{remark}
What we call categories with $G$-action are sometimes called \emph{strict} $G$-categories, and $G$-functors between them are called \emph{strict} $G$-functors.  Often it can be helpful to consider \emph{pseudo} $G$-categories where the $G$-actions are only associative and unital up to natural isomorphism.  In all of the examples in this paper the actions are strictly associative and unital so we do not need to make this distinction. 
\end{remark}

\subsection{Preliminaries on equivariant topological spaces}
 
We briefly review some basic ideas in the context of $G$-spaces, specifically, although the results we cite here have analogues in the setting of $G$-sets and $G$-categories.  Our exposition primarily follows \cite{may:96}.

Many non-equivariant constructions on spaces work equally well equivariantly, and additionally in the equivariant setting we have access to new structures that can be associated to subgroups $H\leq G$. 

\begin{definition}
Let $X$ be a $G$-space and $H\leq G$.
\begin{itemize}
    \item The \emph{$H$-fixed points} of $X$ are given by the space 
    \[ X^H := \{x\in X\mid h\cdot x = x \textrm{ for all }h\in H\}. \]
    
    \item The \emph{$H$-orbits} of $X$, denoted by $X/H$, is the quotient space of $X$ by the equivalence relation generated by $x\sim h\cdot x$ for all $h\in H$.
    
    \item For $x\in X$, the \emph{isotropy subgroup}, or \emph{stabilizer}, of $x$ is
    \[ G_x := \{g\in G\mid g\cdot x=x\}\leq G. \] 
\end{itemize}
Note that $x\in X^H$ precisely when $H\leq G_x$. Both $X^H$ and $X/H$ have the structure of $W_GH$-spaces, where $W_GH = N_GH/H$ is the \emph{Weyl group} of $H$ in $G$.  Here, $N_GH$ denotes the normalizer of $H$ in $G$.
\end{definition}

The functors $G\Top\to \Top$ that take a $G$-space $X$ to its $H$-fixed points and $H$-orbits are the right and left adjoints, respectively, of the functor $\Top\to G\Top$ that gives a space the trivial $G$-action. 

Given $H\leq G$, we can also consider the \emph{restriction} functor $\downarrow^G_H \colon G\Top\to H\Top$ that only remembers the $H$-action. This functor admits a left adjoint
$\uparrow_H^G \colon H\Top \to G\Top $
called \emph{induction}. 
Given an $H$-space $Y$, the induction of $Y$ is the balanced product 
\[\uparrow_H^G(Y) = G\times_H Y=G\times Y/\sim, \] 
where $\sim$ is the relation generated by $(g,h\cdot y)\sim (gh, y)$ for $g\in G$, $y\in Y$, and $h\in H$. 
If $X$ is a $G$-space, rather than just an $H$-space, then there is a $G$-homeomorphism
\[ G\times_H X \cong_G G/H\times X .
\] 

\begin{definition}
A \emph{homotopy between $G$-maps} $X\to Y$ is a homotopy $H\colon X\times I\to Y$ that is also a $G$-map, where $I$ is given the trivial $G$-action.  A $G$-map $f\colon X\to Y$ is a \emph{(weak) $G$-equivalence} if it is a (weak) equivalence upon passage to $H$-fixed points $f^H\colon X^H\to Y^H$ for each $H\leq G$.
\end{definition}

Taking $H=e$, we see that such an $f$ needs to be a homotopy equivalence of the underlying spaces. In light of the definition above, much of equivariant homotopy theory amounts to non-equivariant homotopy theory of fixed-point spaces. 

\subsection{Preliminaries on equivariant classifying spaces}

We now establish some basic facts about classifying spaces of $G$-categories. 
If $\mathcal C$ is a category with $G$-action, then its nerve $N \mathcal C$ is the same simplicial set from \cref{defn:nerve}, which now has a $G$-action given objectwise, making it a $G$-object in $\sSet$. 

This construction is functorial, in that a $G$-functor induces a $G$-map of classifying spaces.  Note that a $G$-functor also restricts to a functor on $H$-fixed points $\mathcal C^H\to \mathcal D^H$, so we also get maps on fixed points of nerves and classifying spaces. On nerves, we have that $(N\mathcal C)^H = N(\mathcal C^H)$, and the following proposition implies that taking fixed points also commutes with taking classifying spaces.

\begin{proposition} \label[proposition]{fixed pts vs realizaton}
Let $G$ be a finite group.  For any $H \leq G$ and simplicial $G$-space $X$, taking $H$-fixed points commutes with geometric realization, i.e., there is a homeomorphism $\abs{X^H} \cong \abs{X}^H$. 
\end{proposition}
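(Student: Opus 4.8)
The plan is to use the standard fact that geometric realization, as a left adjoint, commutes with colimits, together with the observation that taking $H$-fixed points of a simplicial $G$-space can be computed levelwise. The key point is that $H$-fixed points and geometric realization individually interact well with the simplicial structure, and the finiteness of $G$ (hence of $H$) lets us reduce to a finite-colimit computation where fixed points are well-behaved.

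First, I would record the levelwise statement: for a simplicial $G$-space $X_\bullet$, the simplicial space $(X^H)_\bullet$ defined by $(X^H)_n = (X_n)^H$ is well-defined, because the $G$-action on $X_\bullet$ is simplicial, so each face and degeneracy map is a $G$-map and therefore restricts to the $H$-fixed subspaces. Next, recall that $\abs{X} = \coprod_n X_n \times \Delta^n / \!\sim$, where the equivalence relation is generated by the face and degeneracy identifications and $G$ acts only on the $X_n$ factors (trivially on each $\Delta^n$). I would then argue that the canonical map $\abs{X^H} \to \abs{X}^H$ — induced by the inclusions $(X_n)^H \hookrightarrow X_n$ — is a homeomorphism. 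Injectivity and surjectivity: a point of $\abs{X}$ has a unique representative $(x, u)$ with $x \in X_n$ nondegenerate and $u$ in the interior of $\Delta^n$; since $G$ fixes $u$ and acts on the $X_n$ factor, such a point is $H$-fixed if and only if $x$ is $H$-fixed, i.e. $x \in (X_n)^H$, which exactly characterizes the image of $\abs{X^H}$. That it is a homeomorphism and not merely a continuous bijection follows because both sides carry the quotient topology from $\coprod_n (X_n)^H \times \Delta^n$ (for the left side by definition, and for the right side because $\abs{X}^H$, as a closed subspace of the realization, inherits the quotient topology — here one uses that $H$ is finite so $(-)^H$ is a finite limit and commutes with the relevant quotients, or alternatively that realization of simplicial spaces is built from pushouts and coproducts which $(-)^H$ preserves when $H$ is finite, since then $(-)^H$ is a right adjoint that also preserves these particular colimits).

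Concretely, the cleanest route is: $(-)^H \colon G\Top \to \Top$ preserves all limits (it is a right adjoint to the trivial-action functor precomposed with restriction, or directly an equalizer construction), and — crucially — because $H$ is finite it also preserves the filtered colimits and pushouts along closed inclusions that appear in the skeletal filtration $\abs{X} = \colim_n \abs{X}^{(n)}$, where $\abs{X}^{(n)}$ is obtained from $\abs{X}^{(n-1)}$ by a pushout attaching $X_n \times \Delta^n$ along the latching object and $\partial \Delta^n$. Applying $(-)^H$ to this filtration and these pushouts, using that $(-)^H$ sends the pushout square for $\abs{X}$ to the pushout square for $\abs{X^H}$ (the latching-object and boundary-inclusion maps are cofibrations preserved by $(-)^H$ for finite $H$), yields $\abs{X}^H \cong \abs{X^H}$ by induction on skeleta and passage to the colimit.

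The main obstacle is the topological bookkeeping in the last paragraph: verifying that $(-)^H$ commutes with the specific pushouts and sequential colimits in the skeletal construction of $\abs{X}$, rather than merely with limits. This is where finiteness of $H$ is essential — for a general topological group $(-)^H$ need not commute with these colimits — and it is worth isolating as a lemma (or citing the relevant statement in \cite{may:96} or \cite{LMS:80}) that for finite $H$, $(-)^H$ preserves pushouts along closed $H$-cofibrations and filtered colimits along closed inclusions. Everything else (the levelwise compatibility, the point-set description of fixed points via nondegenerate representatives) is routine.
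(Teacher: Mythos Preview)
Your bijectivity argument via unique nondegenerate representatives is exactly the paper's approach: the paper's proof consists of two sentences, observing that the natural inclusion $\abs{X^H}\to\abs{X}^H$ is surjective because any $H$-fixed class $[x,t]$ has an $H$-fixed representative (the paper phrases this as ``one of the faces of $x$ must be $H$-fixed,'' which is the same nondegenerate-representative reasoning you spell out).

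Where you go further than the paper is in addressing why the continuous bijection is a homeomorphism. The paper simply does not comment on this; it calls the map an ``inclusion'' and checks surjectivity. Your discussion of the quotient topologies, and your alternative skeletal-filtration argument using that $(-)^H$ for finite $H$ preserves the relevant pushouts and sequential colimits along closed inclusions, are genuinely additional content. For the paper's actual applications---nerves of categories, i.e., simplicial $G$-\emph{sets}---both sides are CW complexes and the point is moot, which is presumably why the authors did not elaborate; but for the stated generality of simplicial $G$-\emph{spaces}, your extra care (and the hypothesis of finiteness of $H$, which you correctly flag as essential) is warranted. In short: same core idea, but your write-up is more complete, and your skeletal-filtration route is a valid and somewhat more robust alternative that the paper does not pursue.
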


\begin{proof}
Since taking fixed points for a finite group is given by a finite limit, it suffices to show that geometric realization of simplicial spaces preserves all finite limits, for which it suffices to know that it preserves the terminal object and pullbacks.  The fact that it preserves the terminal object follows from the definition, and the fact that it preserves pullbacks was established in \cite[Corollary 11.6]{may:72}.
\end{proof}

Recall that a functor $F \colon \mathcal C \to \mathcal D$ is \emph{homotopy initial} (respectively, \emph{homotopy final}) if the overcategories $F \downarrow d$ (respectively, undercategories $d \downarrow F$) are contractible for every object $d$ of $\mathcal D$.  Quillen's Theorem A \cite[\S 1]{quillen:73} shows that such a functor induces a homotopy equivalence on classifying spaces. We can generalize this notion to $G$-functors.

\begin{definition} \label[definition]{htpy initial def}
    A $G$-functor $F \colon \mathcal C \to \mathcal D$ between $G$-categories is \emph{$G$-homotopy initial} (respectively, \emph{$G$-homotopy final}) if the overcategories $F \downarrow d$ (respectively,  undercategories $d \downarrow F$) are $G_d$-contractible for every object $d$ of $\mathcal D$.
\end{definition}

Note that, in \cite{DM:16}, Dotto and Moi instead use the terminology \emph{left $G$-cofinal} rather than $G$-initial, and \emph{right $G$-cofinal} rather than $G$-final.

In \cref{app Q thm A}, we prove that the realization of a $G$-homotopy initial or final functor is a $G$-equivalence on classifying spaces, in the form of an equivariant version of Quillen's Theorem A.

The category of simplices of a $G$-category inherits a $G$-action, and the last vertex functor $\varphi$ is a $G$-functor. In \cref{cor:last vertex}, we show that this functor is $G$-homotopy initial, which has the following consequence for the partition complex.

\begin{corollary}\label[corollary]{cor:last vertex for P(n)}
    The last vertex functor $\Delta \mathcal P(\mathbf n) \to \mathcal P(\mathbf n)$ is $\Sigma_n$-homotopy initial.
\end{corollary}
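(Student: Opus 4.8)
The plan is to deduce \cref{cor:last vertex for P(n)} as a direct consequence of the more general statement \cref{cor:last vertex} announced in the excerpt: namely, that for \emph{any} category $\mathcal{C}$ with $G$-action, the last vertex functor $\varphi\colon \Delta\mathcal{C}\to\mathcal{C}$ is $G$-homotopy initial. Specializing to $G=\Sigma_n$ and $\mathcal C=\mathcal P(\mathbf n)$, with its standard $\Sigma_n$-action by relabeling blocks of a partition, immediately yields the corollary. So the real content is showing $\varphi$ is $G$-homotopy initial in the sense of \cref{htpy initial def}: for each object $c$ of $\mathcal{C}$, the overcategory $\varphi\downarrow c$ must be $G_c$-contractible, where $G_c$ is the isotropy group of $c$.

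First I would unwind what $\varphi\downarrow c$ looks like. An object is a $k$-simplex $c_0\to\cdots\to c_k$ of $N\mathcal{C}$ together with a morphism $c_k\to c$; equivalently, it is a chain $c_0\to\cdots\to c_k\to c$ ending at $c$, and morphisms are generated by the simplicial face and degeneracy operators acting on the first $k+1$ terms. This is precisely the (nerve of the) category one gets by pulling back, and in fact $\varphi\downarrow c \cong \Delta(\mathcal{C}\downarrow c)$, the category of simplices of the slice $\mathcal{C}\downarrow c$. Since $c$ is fixed by $G_c$, the whole slice $\mathcal{C}\downarrow c$ carries a $G_c$-action, and the identification $\varphi\downarrow c\cong\Delta(\mathcal{C}\downarrow c)$ is $G_c$-equivariant. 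Thus, using \cref{fixed pts vs realizaton} and $(N\mathcal C)^H = N(\mathcal C^H)$, it suffices to show $\Delta(\mathcal{C}\downarrow c)$ is $G_c$-contractible.

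Next, $G_c$-contractibility means $|\Delta(\mathcal{C}\downarrow c)|^H$ is contractible for every $H\le G_c$. By the same fixed-point identifications, $|\Delta(\mathcal{C}\downarrow c)|^H \cong |\Delta((\mathcal{C}\downarrow c)^H)|$, and the slice $(\mathcal{C}\downarrow c)^H = \mathcal{C}^H\downarrow c$ (this last step uses that $c$ is $H$-fixed, so the $H$-fixed slice is the slice of the $H$-fixed category over $c$). But $\mathcal{C}^H\downarrow c$ has a terminal object, namely $\id_c$, hence its classifying space is contractible; and the last vertex map $|\Delta\mathcal{D}|\to|\mathcal{D}|$ is a (non-equivariant) homotopy equivalence for any $\mathcal{D}$, so $|\Delta(\mathcal{C}^H\downarrow c)|$ is contractible as well. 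This establishes $G_c$-contractibility of each overcategory and hence $G$-homotopy initiality of $\varphi$, giving the corollary.

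The main obstacle I anticipate is not conceptual but bookkeeping: carefully verifying that all the relevant constructions — forming the slice, taking the category of simplices, and passing to $H$-fixed points — commute with one another in a $G_c$- (resp.\ $H$-) equivariant way, in particular the identity $(\mathcal C\downarrow c)^H = \mathcal C^H\downarrow c$, which relies essentially on $H\le G_c$. One must also be mindful that the non-equivariant fact ``$|\Delta\mathcal D|\simeq|\mathcal D|$'' is being invoked only fixed-point-wise, which is legitimate precisely because we have reduced $G_c$-contractibility to a family of non-equivariant contractibility statements via \cref{fixed pts vs realizaton}. Once these compatibilities are pinned down, the argument is essentially the equivariant repackaging of the classical proof that the last vertex functor is homotopy initial.
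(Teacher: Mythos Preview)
Your proposal is correct and follows essentially the same approach as the paper: the corollary is deduced by specializing \cref{cor:last vertex} to $G=\Sigma_n$ and $\mathcal C=\mathcal P(\mathbf n)$, and your sketch of the proof of \cref{cor:last vertex} --- identifying $\varphi\downarrow c\cong\Delta(\mathcal C\downarrow c)$, passing to $H$-fixed points to obtain $\Delta(\mathcal C^H\downarrow c)$, and then invoking the non-equivariant last vertex equivalence together with the terminal object $\id_c$ --- matches the paper's argument in \cref{cor:last vertex} essentially verbatim.
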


\section{$G$-partition complexes} \label[section]{sec:equivar partitions}

We now introduce equivariant versions of partition complexes; that is, we develop an analogue of $\mathcal P(\mathbf{n})$, where  the finite set $\mathbf n$ is replaced with a $G$-set $A$ so that $|A|=n$. 

To figure out what we mean by a partition of a $G$-set $A$, we first note that the data of a partition of $\mathbf n$ can be encoded as the equivalence class of a surjective function $\mathbf n \twoheadrightarrow \mathbf k$, modulo the action by $\Sigma_k$ on $\mathbf k$. As an example, the partition 
\[ (12)(345)(6) \] 
can be expressed as the function $\mathbf 6\twoheadrightarrow\mathbf 3$ given by 
\[ 1,2 \mapsto 1, \ 3,4,5\mapsto 2, \ 6\mapsto 3. \] 
The role of the equivalence relation is to identify this mapping with the map 
\[ 1,2 \mapsto 2, \ 3,4,5\mapsto 1, \ 6\mapsto 3, \] 
that determines the same partition. 
From this perspective, there are several natural ways to extend this notion to account for a $G$-action:
\begin{itemize}
    \item through non-equivariant functions $A\twoheadrightarrow\mathbf k$ where $\mathbf k$ has the trivial $G$-action;
    
    \item through $G$-maps $A\twoheadrightarrow B$ where $A$ and $B$ are $G$-sets; or
    
    \item through non-equivariant functions $A\twoheadrightarrow B$ where $A$ and $B$ are $G$-sets.
\end{itemize}

We focus on the first two notions; see \cref{weirdtrees} for a discussion on why we choose to ignore the third.

\subsection{$G$-partitions} \label[subsection]{P(A) section} 

We now explore the first notion of $G$-partitions described above.

\begin{definition}
For any $G$-set $A$, let $\mathcal P(A)$ denote the $G$-poset of non-trivial partitions of $\downarrow^G_e A$, the underlying set of $A$, ordered by coarsening.
\end{definition}

Equivalently, we can describe $\mathcal P(A)$ as the category whose objects are equivalence classes of non-equivariant surjections $A \twoheadrightarrow \mathbf{k}$ modulo the action by $\Sigma_k$, and arrows $(A \twoheadrightarrow \mathbf k)\to (A \twoheadrightarrow \mathbf j)$ are factorizations 

\[\begin{tikzcd}
A\rar[two heads]\dar[two heads] & \mathbf j\\
\mathbf{k}\ar[ur,two heads]
\end{tikzcd}.\]
As in the non-equivariant case, the trivial partitions $A\to |A|$ and $A\to \mathbf 1$ are excluded. 

Note that this data is well-defined and indeed forms a poset, since by surjectivity of $A\twoheadrightarrow \mathbf k$, any two such maps $\mathbf k \twoheadrightarrow \mathbf j$ must agree, and this factorization determines a unique factorization between any two elements of the equivalence classes of the legs.  Moreover, $\mathcal{P}(A)$ is a $G$-poset, since an element $g\in G$ acts on $\mathcal{P}(A)$ by precomposition with its inverse; that is, $g$ sends $A \twoheadrightarrow \mathbf k$ to $\begin{tikzcd}[column sep = scriptsize]A \arrow[r, "g^{-1}"] & A \arrow[r, twoheadrightarrow] & \mathbf k \end{tikzcd}$.

The following result gives us information about how to relate $\mathcal P(A)$ and $\mathcal P(\mathbf n)$, as well as their categories of simplices. The proof is omitted, as it merely consists of a detailed unpacking of the definitions involved.

\begin{lemma} \label[lemma]{restrictionlemma}
Let $A$ be a $G$-set with $|A| = n$, and let $\alpha \colon G \to \Sigma_n$ denote the group homomorphism encoding the $G$-action. Then 
\[ \mathcal P(A) = \alpha^{\**}\mathcal P(\mathbf n) \quad 
{\rm and} \quad 
 \Delta \mathcal P(A) = \alpha^{\**} \Delta \mathcal P(\mathbf n). \]
\end{lemma}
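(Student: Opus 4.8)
The plan is to recognize that both equalities are purely formal consequences of the definitions, once things are set up carefully. Since $|A|=n$, we may take the underlying set of $A$ to be $\mathbf n$ itself, with $g$ acting as the permutation $\alpha(g)\in\Sigma_n$; this is precisely the content of the phrase ``$\alpha$ encodes the $G$-action.'' Recall that $\alpha^{*}\mathcal P(\mathbf n)$ denotes the poset $\mathcal P(\mathbf n)$ equipped with the $G$-action $g\cdot x := \alpha(g)\cdot x$, where on the right we use the usual $\Sigma_n$-action on partitions of $\mathbf n$ by relabeling (which is itself the special case $G=\Sigma_n$ of the $G$-action on $\mathcal P(\mathbf n)$ described in \cref{P(A) section}). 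So the only thing to check for the first equality is that the $G$-actions agree: the underlying posets are literally equal, since $\downarrow^G_e\! A = \mathbf n$, the coarsening order is the same, and the excluded objects (the discrete and indiscrete partitions, equivalently $A\to|A|$ and $A\to\mathbf 1$) coincide on both sides.

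Next I would compare the two actions. By definition $g\in G$ sends the class $[f\colon A\twoheadrightarrow\mathbf k]$ in $\mathcal P(A)$ to $[f\circ g^{-1}]$, where $g^{-1}\colon A\to A$ is the set map $i\mapsto \alpha(g)^{-1}(i)$. Hence $[f\circ g^{-1}] = [f\circ \alpha(g)^{-1}]$, and the right-hand side is exactly $\alpha(g)\cdot[f]$ for the $\Sigma_n$-action on $\mathcal P(\mathbf n)$. Therefore the $G$-action on $\mathcal P(A)$ is the restriction along $\alpha$ of the $\Sigma_n$-action on $\mathcal P(\mathbf n)$, i.e.\ $\mathcal P(A) = \alpha^{*}\mathcal P(\mathbf n)$. (If one prefers not to identify the underlying set of $A$ with $\mathbf n$, the same computation goes through after fixing a labeling bijection $\ell\colon\mathbf n\xrightarrow{\sim}\downarrow^G_e\! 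A$ satisfying $g^{-1}\circ\ell = \ell\circ\alpha(g)^{-1}$, and transporting the action along $\ell$.)

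For the statement about categories of simplices I would argue by functoriality. The nerve $N(-)$ and the category-of-simplices construction $\Delta(-)=\Delta^{(-)}\downarrow N(-)$ are functors $\Cat\to\sSet$ and $\Cat\to\Cat$, and for a category with group action the induced action on $N\mathcal C$ and on $\Delta\mathcal C$ is defined objectwise from the action morphisms. Consequently both lift to functors that commute with restriction of actions along an arbitrary group homomorphism: restriction along $\alpha$ is likewise defined objectwise, so $\Delta(\alpha^{*}\mathcal D) = \alpha^{*}(\Delta\mathcal D)$ for any $\Sigma_n$-category $\mathcal D$. Applying this with $\mathcal D = \mathcal P(\mathbf n)$ and invoking the first equality gives $\Delta\mathcal P(A) = \Delta(\alpha^{*}\mathcal P(\mathbf n)) = \alpha^{*}\Delta\mathcal P(\mathbf n)$, as claimed.

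There is no substantive obstacle here, which is presumably why the authors omit the proof; the only step warranting care is the bookkeeping in the second paragraph. One must use the hypothesis on $\alpha$ in its precise form ($A$ is $\mathbf n$ with action $\alpha$, equivalently $g^{-1}\circ\ell = \ell\circ\alpha(g)^{-1}$) and keep straight that both the $G$-action and the $\Sigma_n$-action are by precomposition with inverses, so that the directions match and the $\Sigma_k$-equivalence on targets plays no role in the comparison.
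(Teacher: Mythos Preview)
Your proposal is correct and is precisely the ``detailed unpacking of the definitions'' that the paper alludes to; indeed, the paper omits the proof entirely on exactly these grounds. Your bookkeeping is sound: identifying $\downarrow^G_e A$ with $\mathbf n$ via the action homomorphism $\alpha$, matching the precomposition-by-inverse actions, and then deducing the second equality from the functoriality of $\Delta(-)$ and its compatibility with restriction along $\alpha$.
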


Just as for $P({\mathbf n})$,  the last vertex functor for $P(A)$ is $G$-homotopy initial; see \cref{cor:last vertex}.

\begin{corollary}\label[corollary]{cor:last vertex for P(A)}
For any $G$-set $A$, the last vertex functor $\Delta \mathcal P(A) \to \mathcal P(A)$ is $G$-homotopy initial; in particular, it is a homotopy equivalence.
\end{corollary}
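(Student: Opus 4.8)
The plan is to obtain this as a direct specialization of the general statement \cref{cor:last vertex}, namely that the last vertex functor out of the category of simplices of \emph{any} category with $G$-action is $G$-homotopy initial. First I would record that $\mathcal P(A)$ is a category with $G$-action: as noted just after its definition, $g \in G$ acts by precomposing a surjection $A \twoheadrightarrow \mathbf k$ with $g^{-1} \colon A \to A$, and this action is strictly associative and unital, so $\mathcal P(A)$ is a (strict) $G$-poset. Consequently $\Delta \mathcal P(A)$ carries the inherited $G$-action and the last vertex functor $\varphi \colon \Delta \mathcal P(A) \to \mathcal P(A)$ is a $G$-functor. Applying \cref{cor:last vertex} with $\mathcal C = \mathcal P(A)$ then shows that $\varphi$ is $G$-homotopy initial, i.e., that $\varphi \downarrow d$ is $G_d$-contractible for every object $d$ of $\mathcal P(A)$.

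For the ``in particular'' clause I would invoke the equivariant Quillen Theorem A (\cref{thm:A}): a $G$-homotopy initial functor realizes to a $G$-equivalence of classifying spaces. Restricting along the trivial subgroup $\{e\} \le G$, a $G$-equivalence is in particular a weak equivalence of underlying spaces, and since $\abs{\Delta \mathcal P(A)}$ and $\abs{\mathcal P(A)}$ are CW complexes this is a genuine homotopy equivalence.

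An alternative route, avoiding \cref{cor:last vertex}, uses \cref{restrictionlemma}. Writing $\alpha \colon G \to \Sigma_n$ for the homomorphism encoding the $G$-action on $A$, that lemma identifies $\varphi$ for $A$ with the pullback along $\alpha$ of the last vertex functor for $\mathbf n$, which is $\Sigma_n$-homotopy initial by \cref{cor:last vertex for P(n)}. One then checks that restriction of structure group along $\alpha$ preserves $G$-homotopy initiality: the $G$-stabilizer of an object $d$ is the $\alpha$-preimage of its $\Sigma_n$-stabilizer, and for $H \le G_d$ the fixed-point space $(\varphi \downarrow d)^H$ equals $(\varphi \downarrow d)^{\alpha(H)}$, which is contractible since $\alpha(H)$ lies in the $\Sigma_n$-stabilizer of $d$. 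Either way there is essentially no obstacle beyond this bookkeeping — all the real work has been done in \cref{cor:last vertex} (or \cref{cor:last vertex for P(n)}) and in the equivariant Theorem A — so the main point to get right is simply identifying the hypotheses under which those results apply.
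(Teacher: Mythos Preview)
Your proposal is correct and matches the paper's approach: the corollary is stated without a separate proof, simply deferring to \cref{cor:last vertex} applied to the $G$-poset $\mathcal P(A)$, exactly as you do. One minor point: the reference you want for the equivariant Theorem A is \cref{eq Q thm A}, not \cref{thm:A} (which in this paper labels the non-equivariant statement).
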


The second notion of equivariant partitions is as follows. 

\begin{definition} \label[definition]{mostequivariantposet}
Let $\mathcal P^G(A)$ denote the poset of non-trivial equivariant partitions of $A$, ordered by coarsening. 
\end{definition}

In other words, $\mathcal P^G(A)$ is the category whose objects are equivalence classes of $G$-surjections between $G$-sets $A \twoheadrightarrow B$ modulo the action by $\Aut_G(B)$, and whose arrows are factorizations $A \twoheadrightarrow B' \twoheadrightarrow B$ where all maps are equivariant. The trivial partitions given by $G$-isomorphisms $A\xrightarrow{\cong} B$ and the constant map $A\to \mathbf 1$ are excluded. 
As the objects of $\mathcal P^G(A)$ are $G$-maps, the natural $G$-action on $\mathcal P^G(A)$ is the trivial one.

\subsection{Interactions through fixed points}

The next theorem provides a fundamental connection between the $G$-category of partitions $\mathcal{P}(A)$ and the category of equivariant partitions $\mathcal{P}^G(A)$.  This connection is utilized in \cref{sec:PA homotopy} to reduce questions about the equivariant homotopy type of $\mathcal{P}(A)$ to the study of the homotopy type of $\mathcal{P}^G(A)$, which, in turn, allows us to leverage classical tools, like Quillen's Theorem A, to simplify certain computations.

\begin{theorem} \label[theorem]{fixedptrel} 
For any $H\leq G$ there is an equivalence of categories 
\[ \mathcal P(A)^H \simeq \mathcal P^H(\downarrow^G_H A). \]
\end{theorem} 

\begin{proof}
    To simplify notation, we leave the $\downarrow^G_H$ implicit and simply treat $A$ as an $H$-set. We begin by defining an auxiliary category $\mathcal P^H_{\ord}(A)$ whose objects are the equivalence classes of $H$-surjections $f\colon A\twoheadrightarrow B$, where $B$ is an $H$-set equipped with a total ordering.  Morphisms in this category are the same as those of $\mathcal P^H(A)$; in particular, they are not required to respect the ordering.  One can then see that the functor $\mathcal P^H_{\ord}(A)\to\mathcal P^H(A)$ that forgets the orderings is an equivalence of categories.  It remains to check that $\mathcal P^H_{\ord}(A)$ is categorically equivalent to $\mathcal P(A)^H$.
    
    Given $f\colon A\twoheadrightarrow B$ in $\mathcal P^H_{\ord}(A)$, the total ordering on $B$ determines a unique bijection $B \xrightarrow{\cong} \mathbf{k_B}$ where $k_B = |B|$.  Define a functor $F\colon \mathcal P^H_{\ord}(A)\to \mathcal P(A)^H$ that sends the class of a map $f\colon A\twoheadrightarrow B$ to the class of 
    \[ A\xrightarrow{f} B \xrightarrow{\cong} \mathbf{k_B}. \]
    Note that $F(f)$ is $H$-fixed because for any $h\in H$, the fact that $hfh^{-1}=f$ implies that $F(f)$ and $F(f)\circ h^{-1}$ are the same up to an automorphism of $\mathbf{k_B}$, namely the one determined by $h$.  Similar reasoning shows that $F$ is well-defined, since varying the representative $f\colon A\twoheadrightarrow B$ of an equivalence class by an $H$-automorphism of $B$ only changes the value of $F(f)$ by an automorphism of $\mathbf{k_B}$.
    
    If $s\colon B\twoheadrightarrow B'$ defines a morphism in $\mathcal P^H_{\ord}(A)$, we define $F(s)$ to be the unique map that fills the following square:
    \[ \begin{tikzcd}
            B \ar["s"]{r} \ar[swap, "\cong"]{d} & B' \ar["\cong"]{d}\\
            \mathbf{k_B} \ar[swap, "F(s)"]{r} & \mathbf{k_B'}.
        \end{tikzcd} \]
    We want to show that $F$ is an equivalence of categories.  Note that since $\mathcal{P}^H_{\ord}(A)$ is equivalent to a poset, its hom-sets all have size $0$ or $1$, and so the functor $F$ is faithful.
    
    First we show that $F$ is surjective on objects.  Let $f\colon A\twoheadrightarrow\mathbf k$ represent an object in $\mathcal{P}(A)^H$, which means that for each $h\in H$ there exists a (necessarily unique) bijection $\sigma_h\colon \mathbf k\to\mathbf k$ such that the following diagram commutes:
    \[ \begin{tikzcd}
        A\rar["h"]\dar["f"', twoheadrightarrow] & A\dar["f",twoheadrightarrow]\\
        \mathbf k\rar["\sigma_h"'] & \mathbf k.
    \end{tikzcd} \]
    
    Thus $\mathbf k$ is endowed with the $H$-action given by $hi=\sigma_h (i)$ for all $i\in\mathbf k$ and $h\in H$. Note that the uniqueness of $\sigma_h$ ensures that $\sigma_e=\id$ and $\sigma_{h_1}\sigma_{h_2}=\sigma_{h_1h_2}$ and we do indeed get an $H$-action. This action is defined so that $f\colon A\twoheadrightarrow \mathbf{k}$ is an $H$-map which determines an object in $\mathcal P^H_{\ord}(A)$ whose image under $F$ is equal to $f\colon A\twoheadrightarrow \mathbf{k}$.  
    
    It remains to show that $F$ is full.  Given a morphism $\varphi\colon\mathbf k \twoheadrightarrow\mathbf j$ between objects $f\colon A\twoheadrightarrow\mathbf k$ and $f'\colon A\twoheadrightarrow\mathbf j$ in $\mathcal{P}(A)^H$, consider the following diagram:
    \[ \begin{tikzcd}
    A\rar["h"]\dar["f", twoheadrightarrow]\ar[dd,"f'"',bend right=40, twoheadrightarrow] & A\dar["f", twoheadrightarrow]\ar[dd,"f'", bend left=40, twoheadrightarrow]\\
    \mathbf k\rar["\sigma_h^{\mathbf k}"]\dar["\varphi", twoheadrightarrow] & \mathbf k\dar["\varphi", twoheadrightarrow]\\
    \mathbf j\rar["\sigma_h^{\mathbf j}"] & \mathbf j.
    \end{tikzcd} \]
    The lower square commutes when precomposed with the surjection $f$, which implies that the square itself commutes and thus $\varphi$ is an $H$-map when $\mathbf{k}$ and $\mathbf{j}$ are given $H$-actions as above.  This data determines a map $\varphi'$ between the corresponding objects $f\colon A\twoheadrightarrow\mathbf k$ and $f'\colon A\twoheadrightarrow\mathbf j$ in $\mathcal P^H_{\mathrm{ord}}(A)$ with $F(\varphi')=\varphi$, and hence $F$ is full.
\end{proof}

\begin{corollary} \label[corollary]{fixedptlemma}
  For any $H\leq G$ there is an equivalence of categories 
  \[ \Delta \mathcal P(A)^H \simeq \Delta \mathcal P^H(\downarrow^G_H A). \]
\end{corollary}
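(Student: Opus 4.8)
The plan is to obtain this by applying the category-of-simplices construction to the equivalence supplied by \cref{fixedptrel}, with one small piece of care. First I would check that $\Delta\mathcal P(A)^H$ agrees with the category of simplices of the fixed-point category $\mathcal P(A)^H$, i.e.\ that taking $H$-fixed points commutes with $\Delta(-)$: since $\Delta\mathcal C = \Delta^{(-)}\downarrow N\mathcal C$ carries the $G$-action induced objectwise from $N\mathcal C$, an $H$-fixed object of $\Delta\mathcal C$ is an $H$-fixed simplex of $N\mathcal C$, and $(N\mathcal C)^H = N(\mathcal C^H)$ by the identity recalled before \cref{fixed pts vs realizaton}; moreover a face or degeneracy map between two $H$-fixed simplices is automatically $H$-fixed, because $G$ acts trivially on the simplicial operators. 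This identifies $\Delta\mathcal P(A)^H$ with $\Delta(\mathcal P(A)^H)$.

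Next I would invoke \cref{fixedptrel} to get $\mathcal P(A)^H \simeq \mathcal P^H(\downarrow^G_H\!A)$. Here is the one subtle point, and I expect it to be the only real ``obstacle'': $\Delta(-)$ does \emph{not} send an arbitrary equivalence of categories to an equivalence of categories (only to a functor that is a homotopy equivalence on classifying spaces), so one cannot simply apply $\Delta(-)$ to the equivalence of \cref{fixedptrel} and be done. The fix is to observe that both $\mathcal P(A)^H$ (a subposet of the $G$-poset $\mathcal P(A)$) and $\mathcal P^H(\downarrow^G_H\!A)$ are posets, and that an equivalence of categories between posets is forced to be an \emph{isomorphism} of posets: fully faithfulness makes it order-preserving and order-reflecting, essential surjectivity makes it surjective on objects (isomorphic objects of a poset coincide), and injectivity on objects then follows from antisymmetry. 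Hence \cref{fixedptrel} upgrades to an isomorphism of posets, which induces an isomorphism of nerves and therefore an isomorphism $\Delta(\mathcal P(A)^H) \cong \Delta\mathcal P^H(\downarrow^G_H\!A)$.

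Finally I would string the two isomorphisms together: $\Delta\mathcal P(A)^H \cong \Delta(\mathcal P(A)^H) \cong \Delta\mathcal P^H(\downarrow^G_H\!A)$, which is in particular the asserted equivalence of categories. Everything here is bookkeeping once the poset observation is in hand; there is no genuinely hard step, and if one only wanted a homotopy equivalence of classifying spaces one could skip the poset upgrade and combine a bare application of $\Delta(-)$ with the last-vertex functor (\cref{cor:last vertex for P(A)}) instead.
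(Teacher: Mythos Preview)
Your argument is correct and is essentially the elaboration of what the paper means by ``follows directly'' from \cref{fixedptrel}: apply $\Delta(-)$ after identifying $(\Delta\mathcal P(A))^H$ with $\Delta(\mathcal P(A)^H)$. Your observation that $\Delta(-)$ does not preserve mere equivalences, together with the poset upgrade of \cref{fixedptrel} to an isomorphism, is a genuine point of care that the paper leaves implicit; note that the equivalence in \cref{fixedptrel} is presented as a zig-zag through the preorder $\mathcal P^H_{\mathrm{ord}}(A)$, but composing with a chosen quasi-inverse yields a single functor between the two posets, to which your ``equivalence between posets is an isomorphism'' argument then applies.
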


\section{$G$-trees} \label[section]{sec:G trees}

Having defined several notions of equivariant partitions, we now present the corresponding notions of trees in this equivariant context.  We refer the reader back to \cref{ntree def} for the analogous non-equivariant definitions.

\begin{definition}
    For any finite $G$-set $A$, an \emph{$A$-labeled tree}, or simply \emph{$A$-tree}, is a tree equipped with a non-equivariant labeling bijection from $A$ to the leaves of $T$. We say an $A$-tree is \emph{layered, reduced}, or \emph{measured} if the underlying $|A|$-tree is.
    
    An \emph{isomorphism} of (reduced) $A$-trees is a root-preserving homeomorphism. It is an isomorphism of labeled $A$-trees if it also preserves the labels, and an isomorphism of measured $A$-trees if it preserves edge measurements.
\end{definition}

First, we observe that, as in the non-equivariant case, the category of simplices $\Delta \mathcal P(A)$ may be described as the category of (isomorphism classes of) \emph{layered $A$-trees}.

\begin{example}
    Let $G = \Sigma_6$ and $A = \mathbf 6 = \set{1,2,3,4,5,6}$. Then both trees from \cref{layeredtrees ex} are examples of layered $\mathbf 6$-trees.
\end{example}

\begin{example} \label[example]{atree ex}
    Let $G = C_4 = \set{1,i,-1,-i}$ and $A = \set{x,ix,y,-y,iy,-iy} = C_4 \amalg C_4/C_2$, with $x=-x$ and $ix=-ix$. Then
    \[\scalebox{0.85}{
    \begin{tikzpicture} 
        [level distance=10mm, 
        every node/.style={fill, circle, minimum size=.1cm, inner sep=0pt}, 
        level 1/.style={sibling distance=20mm}, 
        level 2/.style={sibling distance=20mm}, 
        level 3/.style={sibling distance=14mm},
        level 4/.style={sibling distance=7mm}]

        \node (tree)[style={color=white}] {} [grow'=up] 
        child {node (level1) {} 
	        child{ node {}
		        child{ node (level 3) {}
			        child
    			    child
        		}
	        }
    	    child{ node (level2) {}
	    	    child{ node (level3) {}
		    	    child
    			    child
        		}
	        	child{ node {}
		        	child
			        child
        		}
	        }
        };

        \tikzstyle{every node}=[]
    
        \draw[dashed] ($(level1) + (-2cm, .5cm)$) -- ($(level1) + (2.5cm, .5cm)$);
        \draw[dashed] ($(level1) + (-2cm, 1.5cm)$) -- ($(level1) + (2.5cm, 1.5cm)$);

        \node at ($(level1) + (-2.5cm, 1.5cm)$) {$0$};
        \node at ($(level1) + (-2.5cm, .5cm)$) {$1$};

        \node at ($(level3) + (-1.7cm, 1.3cm)$){$x$};
        \node at ($(level3) + (-0.9cm, 1.3cm)$){$y$};
        \node at ($(level3) + (-0.4cm, 1.3cm)$){$ix$};
        \node at ($(level3) + (0.4cm, 1.3cm)$){$iy$};
        \node at ($(level3) + (1cm, 1.3cm)$){$-y$};
        \node at ($(level3) + (1.8cm, 1.3cm)$){$-iy$};
    \end{tikzpicture}} \]
    is the layered $A$-tree corresponding to the chain of partitions
    \[ (x,y)(ix,iy)(-y,-iy) < (x,y)(ix,iy,-y,-iy). \]
    Equivalently, reading down the layers of this tree, we see that this chain corresponds to the string $A \onto \mathbf 3 \to \mathbf 2$, where $A$, $\mathbf 3$, and $\mathbf 2$ correspond to the leaves, the inner edges in layer 0, and the inner edges in layer 1, respectively. Note that the labeling of the leaves need not correspond in any way to the symmetry of the tree.    
\end{example}

As before, layered $A$-trees are defined up to label-preserving isomorphism, so, for example, we may swap the labels $ix$ and $iy$, and independently $-y$ and $-iy$ in the above example. Next, we consider the category of reduced $A$-trees.

\begin{definition}
    We denote by $\mathcal T(A)$ the category whose objects are isomorphism classes of reduced $A$-trees $T$,
    and where there is a unique morphism $T \to T'$ if $T'$ can be obtained from $T$ by contracting a collection of inner edges, and call $T'$ a \emph{face} of $T$.  As we did non-equivariantly, we omit the terminal object given by the corolla tree with no internal edges.
    
    The poset $\mathcal T(A)$ naturally has an action by $G$, where $g$ acts on objects by sending $(T, f \colon A \to L(T))$ to $(T, fg^{-1})$.
\end{definition}

\begin{example} \label[example]{atreemap ex}
    Let $G$ and $A$ be as in \cref{atree ex}.  Then there is a map in $\mathcal T(A)$
    
    \[\scalebox{0.75}{ \begin{tikzpicture} 
    [level distance=10mm, grow'=up,
    every node/.style={fill, circle, minimum size=.1cm, inner sep=0pt}, 
    level 2/.style={sibling distance=26mm}, 
    level 3/.style={sibling distance=18mm},
    level 4/.style={sibling distance=7mm}]

    \node (tree) [style={color=white}] {}
        child {node {} 
	        child{ node (xy) {}
		        child[sibling distance=8mm]
			    child[sibling distance=8mm]   		
	        }
    	    child{ node {}
	    	    child{ node (xiy) {}
		    	    child
    			    child
        		}
	        	child{ node (yiy) {}
		        	child
			        child
        		}
	        }
        };
    
    \tikzstyle{level 2}=[{sibling distance=9mm}]
    \tikzstyle{level 3}=[{sibling distance=9mm}]   
    \node (tree2) at (7,0) [style={color=white}] {}
        child{node (r) {}
            child
            child
            child{edge from parent [draw=none]}
            child[level distance=10mm]{node (s) {}
                child 
                child 
                child 
                child 
            }
        };
    
    \tikzstyle{every node}=[]

    \node[label=$\longrightarrow$] at (3.5,10mm) {};

    \node at ($(xy) + (4mm, 12mm)$){$y$};
    \node at ($(xy) + (-4mm, 12mm)$){$x$};
    
    \node at ($(xiy) + (-4.5mm, 12.5mm)$){$ix$};
    \node at ($(xiy) + (3.5mm, 12.5mm)$){$iy$};
    \node at ($(yiy) + (-4.5mm, 12.5mm)$){$-y$};
    \node at ($(yiy) + (3mm, 12.5mm)$){$-iy$};
    
    \node at ($(r) + (-14.5mm,12mm)$){$x$};
    \node at ($(r) + (-4.5mm,12mm)$){$y$};
    \node at ($(s) + (-14.5mm,12mm)$){$ix$};
    \node at ($(s) + (-4.5mm,12mm)$){$iy$};
    \node at ($(s) + (3.5mm,12mm)$){$-y$};
    \node at ($(s) + (13mm,12mm)$){$-iy$};
    \end{tikzpicture} }.\]
\end{example}

Finally, we consider the $G$-space of measured $A$-trees.

\begin{definition} \label[definition]{spaceAtrees}
We denote the $G$-space of (isomorphism classes of) measured $A$-trees by $\mathbb T(A)$. It is defined as the simplicial complex whose vertices are the measured $A$-trees with exactly one inner edge. An $n$-simplex of $\mathbb T(A)$ corresponds to a measured tree with $n+1$ inner edges whose vertices are obtained by collapsing all but one inner edge which is then assigned weight $1$. Points in such a simplex consist of measured $A$-trees of that shape; that is, they are obtained by assigning lengths to all the inner edges in that simplex shape, and in turn, these lengths determine the barycentric coordinates of the point.

The group $G$ acts on a point of $\mathbb T(A)$ by acting on the underlying $A$-labeled tree.
\end{definition}

Analogously to \cref{restrictionlemma}, we can establish the following relationship between these new notions of equivariant trees and the classical notions reviewed in \cref{sec:P(n) and trees}.

\begin{lemma}  \label[lemma]{restrictionlemmatrees}
Let $A$ be a $G$-set with $|A| = n$, and let $\alpha \colon G \to \Sigma_n$ denote the group homomorphism encoding the $G$-action. Then there is an isomorphism of $G$-categories
\[ \mathcal T(A) \cong_G \alpha^{\**} \mathcal T(\mathbf n) \] 
and a $G$-homeomorphism between spaces 
\[ \mathbb T(A) \cong_{G} \alpha^{\**} \mathbb T(\mathbf n). \] 
\end{lemma}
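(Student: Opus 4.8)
The plan is to fix a bijection, write down an explicit identification of the underlying combinatorial and topological data, and then check that this identification intertwines the two $G$-actions; this is the tree analogue of the (omitted) proof of \cref{restrictionlemma}.

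First I would fix a bijection $\beta\colon \mathbf n \xrightarrow{\cong} A$ — equivalently, a total ordering of $A$ — and recall that, under this identification, the homomorphism $\alpha$ encoding the $G$-action is given by $\alpha(g)=\beta^{-1}\circ g\circ\beta\in\Sigma_n$, where $g\colon A\to A$ denotes the action of $g\in G$. Next I would define a functor $\Phi_\beta\colon \mathcal T(A)\to\mathcal T(\mathbf n)$ sending a reduced $A$-tree $(T,f\colon A\to L(T))$ to the reduced $\mathbf n$-tree $(T, f\circ\beta\colon \mathbf n\to L(T))$, and the analogous map $\Phi_\beta\colon\mathbb T(A)\to\mathbb T(\mathbf n)$ on measured $A$-trees, leaving the edge measurements untouched. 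Precomposing the labeling with $\beta$ changes neither the underlying tree, nor the \emph{layered}, \emph{reduced}, or \emph{measured} conditions, nor the property of one tree being a face of another (contracting a collection of inner edges does not involve the leaf labels), nor any edge lengths; hence $\Phi_\beta$ is well defined on isomorphism classes, is a functor in the first case, and is a map of simplicial complexes (identifying the shape of a fully grown tree indexing a $k$-simplex with the corresponding shape, and acting as the identity on barycentric/edge-length coordinates) in the second. Its inverse is $\Phi_{\beta^{-1}}$, so $\Phi_\beta$ is a bijection on objects; since $\mathcal T(A)$ and $\mathcal T(\mathbf n)$ are posets whose order relation "is a face of" is intrinsic to the labeled tree (hence preserved and reflected by precomposition with $\beta$), $\Phi_\beta$ is automatically an isomorphism of categories, and likewise it is a bijective simplicial map, hence a homeomorphism of simplicial complexes.

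It then remains to verify that $\Phi_\beta$ is $G$-equivariant, where $G$ acts on $\alpha^*\mathcal T(\mathbf n)$ (resp.\ $\alpha^*\mathbb T(\mathbf n)$) through $\alpha$. For $g\in G$ and an $A$-tree $(T,f)$, the respective action formulas give
\[ \Phi_\beta\bigl(g\cdot(T,f)\bigr)=\Phi_\beta(T,\,f\circ g^{-1})=(T,\,f\circ g^{-1}\circ\beta), \]
whereas
\[ \alpha(g)\cdot\Phi_\beta(T,f)=\alpha(g)\cdot(T,\,f\circ\beta)=(T,\,f\circ\beta\circ\alpha(g)^{-1}). \]
These agree because $g^{-1}\circ\beta=\beta\circ\alpha(g)^{-1}$, which is exactly the defining relation $\alpha(g)=\beta^{-1}\circ g\circ\beta$ rearranged; the identical computation applies to $\mathbb T$. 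Thus $\Phi_\beta$ is the required isomorphism of $G$-categories and the required $G$-homeomorphism.

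I do not expect a genuine obstacle: as with \cref{restrictionlemma}, the content is bookkeeping. The one place to be careful is the variance in the final computation — tracking the two inverses in "$f\circ g^{-1}$" and "$\alpha(g)^{-1}$" and noting that $\alpha$ itself carries no inverse — since this is precisely why the $G$-action on $\mathcal T(A)$ was defined by $g\cdot(T,f)=(T,f g^{-1})$; with the opposite convention the map would fail to be equivariant. A secondary, easier point is confirming that $\Phi_\beta$ matches up the simplicial-complex structures on $\mathbb T(A)$ and $\mathbb T(\mathbf n)$, which is immediate since relabeling leaves affects neither the set of tree shapes nor the edge-length coordinates within each simplex.
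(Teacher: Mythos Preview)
Your proposal is correct and is precisely the ``detailed unpacking of the definitions'' that the paper alludes to but omits: the paper gives no proof of \cref{restrictionlemmatrees}, instead introducing it as analogous to \cref{restrictionlemma}, whose proof is likewise omitted for the same reason. Your choice of bijection $\beta$, the relabeling functor/map $\Phi_\beta$, and the equivariance check via $\alpha(g)=\beta^{-1}g\beta$ are exactly the expected bookkeeping.
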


In order to visualize the equivariant partitions introduced in \cref{mostequivariantposet} properly, we need a corresponding more equivariant notion of $A$-tree.

\begin{definition} \label[definition]{gtree def}
    A \emph{$G$-tree} is a tree equipped with a $G$-action through root-preserving automorphisms which endows the sets of leaves, (inner) edges, and vertices with a $G$-action. An \emph{$A$-labeled $G$-tree} is a $G$-tree equipped with an equivariant labeling bijection between $A$ and the $G$-set of leaves. We say an $A$-labeled $G$-tree is 
    \begin{itemize}
        \item \emph{layered} or \emph{reduced} if the underlying $|A|$-tree is,
        
        \item \emph{$G$-elementary} if each layer has a unique $G$-orbit of vertices that are non-unary,
        
        \item \emph{$G$-measured} if the length assignment $E^i(T) \to (0,1]$ is $G$-equivariant.
    \end{itemize}
    
    An \emph{isomorphism} of (reduced) $G$-trees is a $G$-homeomorphism that preserves the root. It is an isomorphism of layered $A$-labeled $G$-trees if it also preserves labels, and an isomorphism of $G$-measured $A$-labeled $G$-trees if it preserves edge measurements. 
\end{definition}

\begin{remark}
    Note that this notion of $G$-tree is distinct from the notion with the same name in the work of the second-named author and Pereira; see \cite[\S 2.2]{BP:22}.  There, the above trees would be examples of ``trees with $G$-action", while the term $G$-tree would refer to ``orbits" of trees, say $G \cdot_H T$ for some tree $T$ with $H$-action. 
\end{remark}

As before, we associate categories and spaces to the different structures on $G$-trees. 
 
\begin{definition}
First, the category of simplices $\Delta \mathcal P^G(A)$ may be described as the category of (isomorphism classes of) \emph{layered $A$-labeled $G$-trees}, where faces and degeneracies again collapse or add layers.

Second, let $\mathcal T^G(A)$ denote the category of isomorphism classes of $A$-labeled $G$-trees, excluding the $A$-corolla. There is a unique morphism $T \to T'$ if $T'$ is obtained from $T$ by contracting a collection of inner edges; we call $T'$ a ($G$-equivariant) \emph{face} of $T$.

Third, let $\mathbb T^G(A)$ denote the space of (isomorphism classes of) measured $A$-labeled $G$-trees.
Its vertices are measured $G$-trees with exactly one orbit of inner edges. The descriptions of generic simplices and points in $\mathbb T^G(A)$ mimic the ones in \cref{spaceAtrees}. 
\end{definition}

\begin{example} \label[example]{Gtree ex}
    Let $G = \set{1,i,-1,-i}$, and $A = \set{x,ix,y,-y,iy,-iy}$ as in \cref{atree ex}.  None of the $A$-trees from \cref{atree ex} or \cref{atreemap ex} may be endowed with a $G$-action such that the $A$-labeling is $G$-equivariant. However, consider the following relabeling of the trees from \cref{atreemap ex}: 
    \[\scalebox{0.85}{ \begin{tikzpicture} 
    [level distance=10mm, grow'=up, auto,
    every node/.style={font=\tiny},
    level 2/.style={sibling distance=26mm}, 
    level 3/.style={sibling distance=18mm},
    level 4/.style={sibling distance=7mm}]

    \node (tree) {}
        child {node [vertex] {} 
	        child{ node (xy) [vertex] {}
		        child[sibling distance=8mm] {edge from parent node {$a$}}
			    child[sibling distance=8mm] {edge from parent node [swap] {$ia$}}
			    edge from parent node {$b$}
	        }
    	    child{ node [vertex] {}
	    	    child{ node (xiy) [vertex] {}
		    	    child {edge from parent node {$c$}}
    			    child {edge from parent node [swap] {$-c$}}
    			    {edge from parent node {$d$}}
        		}
	        	child{ node (yiy) [vertex] {}
		        	child {edge from parent node {$ic$}}
			        child {edge from parent node [swap] {$-ic$}}
			        {edge from parent node [swap]{$id$}}
        		}
        		{edge from parent node [swap] {$e$}}
	        }
	        {edge from parent node {$r$}}
        };
    
    \tikzstyle{level 2}=[{sibling distance=12mm}]
    \tikzstyle{level 3}=[{sibling distance=12mm}]   
    \node (tree2) at (7,0) [style={color=white}] {}
        child{node (r) [vertex] {}
            child {edge from parent node [near end] {$a$}}
            child {edge from parent node [swap] {$ia$}}
            child{edge from parent [draw=none]}
            child[level distance=13mm]{node (s) [vertex] {}
                child[level distance=10mm] {edge from parent node [near end] {$c$}}
                child[level distance=10mm] {edge from parent node [very near end]{$-c$}} 
                child[level distance=10mm] {edge from parent node [swap, very near end] {$ic$}}
                child[level distance=10mm] {edge from parent node [swap, near end ]{$-ic$}}
                {edge from parent node [swap] {$e$}}
            }
            {edge from parent node {$r$}}
        };
    
    \tikzstyle{every node}=[]

    \node[label=$\longrightarrow$] at (3.5,10mm) {};

    \node at ($(xy) + (4mm, 12mm)$){$ix$};
    \node at ($(xy) + (-4mm, 12mm)$){$x$};
    
    \node at ($(xiy) + (-4.5mm, 12.5mm)$){$y$};
    \node at ($(xiy) + (3.5mm, 12.5mm)$){$-y$};
    \node at ($(yiy) + (-4.5mm, 12.5mm)$){$iy$};
    \node at ($(yiy) + (3mm, 12.5mm)$){$-iy$};
    
    \node at ($(r) + (-18mm,12.5mm)$){$x$};
    \node at ($(r) + (-6mm,12.5mm)$){$ix$};
    \node at ($(s) + (-18mm,12.5mm)$){$y$};
    \node at ($(s) + (-6mm,12.5mm)$){$-y$};
    \node at ($(s) + (6mm,12.5mm)$){$iy$};
    \node at ($(s) + (18mm,12.5mm)$){$-iy$};
    \end{tikzpicture}.} \]
    We have additionally named the edges of the tree to indicate the $G$-action.  There is an arrow between these two trees in $\mathcal T^G(A)$.
    
    However, if we had only collapsed the edge labeled by $d$ on the left, the resulting tree would not have a compatible $G$-action, and thus would not be a $G$-tree. We must collapse an entire orbit of inner edges to get a $G$-action on the quotient tree.
\end{example}

\begin{example}  \label[example]{gtreelayered ex}
    With a slight modification, the map from \cref{Gtree ex} is also a map of elementary layered $A$-labeled $G$-trees.  Consider the following trees: 
    
    \[\scalebox{0.75}{ 
    \begin{tikzpicture} 
    [level distance=10mm, grow'=up,auto,
    every node/.style={font=\tiny},
    level 2/.style={sibling distance=26mm}, 
    level 3/.style={sibling distance=18mm},
    level 4/.style={sibling distance=7mm}]

    \node (tree) {}
        child {node [vertex] {} 
            child{ node [vertex] {}
                child{ node [vertex] {}
	                child{ node (xx) [vertex] {}
		                child[sibling distance=8mm] {}
    			        child[sibling distance=8mm] {}
	                }
	            }
	        }
    	    child{ node [vertex] {}
	    	    child{ node (xiy) [vertex] {}
		    	    child {node (y) [vertex] {}
    		    	    child{} 
		    	    }
		    	    child{ node (iy) [vertex] {}
    			        child {} 
    			    }
			    }
	        	child{ node (yiy) [vertex] {}
	        	    child{node (yy) [vertex] {}
	        	        child{}
        	        }
        	        child{ node (iyy) [vertex] {}
                        child{}
                    }
        		}
	        }
        };
    
    \tikzstyle{level 2}=[{sibling distance=30mm}]
    \tikzstyle{level 3}=[{sibling distance=8mm}]   
    \node (tree2) at (7,0) [style={color=white}] {}
        child{node (r) [vertex] {}
            child[level distance=13mm] {node [vertex] {}
                child[level distance=10mm]{ node (xxx) [vertex] {}
                    child{}
                    child{}
                }
            }
            child[level distance=13mm]{node (s) [vertex] {}
                child[level distance=10mm] {node (2y) [vertex] {} child{}}
                child[level distance=10mm] {node (2iy) [vertex] {} child{}}
                child[level distance=10mm] {node (2yy) [vertex] {} child{}}
                child[level distance=10mm] {node (2iyy) [vertex] {} child{}}
            }
        };
    
    \tikzstyle{every node}=[]
 
    \node[label=$\longrightarrow$] at (3.5,2mm) {};

    \node at ($(xx) + (4mm, 12mm)$){$ix$};
    \node at ($(xx) + (-4mm, 12mm)$){$x$};
    
    \node at ($(y) + (0, 12.5mm)$){$y$};
    \node at ($(iy) + (0, 12.5mm)$){$-y$};
    \node at ($(yy) + (0, 12.5mm)$){$iy$};
    \node at ($(iyy) + (0, 12.5mm)$){$-iy$};
    
    \node at ($(xxx) + (-4mm,12.5mm)$){$x$};
    \node at ($(xxx) + (4mm,12.5mm)$){$ix$};
    \node at ($(2y) + (0,12.5mm)$){$y$};
    \node at ($(2iy) + (0,12.5mm)$){$-y$};
    \node at ($(2yy) + (0,12.5mm)$){$iy$};
    \node at ($(2iyy) + (0,12.5mm)$){$-iy$};
    
    \draw[dashed] ($(level1) + (-2.2cm, .5cm)$) -- ($(level1) + (2.8cm, .5cm)$);
    \draw[dashed] ($(level1) + (-2.2cm, 1.5cm)$) -- ($(level1) + (2.8cm, 1.5cm)$);
    \draw[dashed] ($(level1) + (-2.2cm, 2.5cm)$) -- ($(level1) + (2.8cm, 2.5cm)$);
    
    \draw[dashed] ($(level1) + (4.7cm, .7cm)$) -- ($(level1) + (10cm, .7cm)$);
    \draw[dashed] ($(level1) + (4.9cm, 1.8cm)$) -- ($(level1) + (10cm, 1.8cm)$);

    \node at ($(level1) + (-2.5cm, 2.5cm)$) {$0$};
    \node at ($(level1) + (-2.5cm, 1.5cm)$) {$1$};
    \node at ($(level1) + (-2.5cm, 0.5cm)$) {$2$};

    \node at ($(level1) + (4.5cm, 1.8cm)$) {$0$};
    \node at ($(level1) + (4.5cm, .7cm)$) {$1$};
    \end{tikzpicture}}.
    \]
    For readability, we have dropped the names of the edges indicating the action by $G$; however, the action is just as it was previously.  Additionally, this map is between layered trees, as the arrow simply collapses the layer 1 on the left.  Finally, these trees are both $G$-elementary; in particular, even though there are two non-unary vertices in layer 1 in the tree on the left, this tree is still $G$-elementary since those two vertices are in the same $G$-orbit.
\end{example}

\begin{example}
    With $G$ and $A$ as in \cref{Gtree ex}, the tree
    \[ \scalebox{1.1}{ \begin{tikzpicture} 
    [level distance=10mm, grow'=up, auto,
    every node/.style={font=\tiny},
    level 2/.style={sibling distance=26mm}, 
    level 3/.style={sibling distance=7mm},
    level 4/.style={sibling distance=7mm}]

    \node (tree) {}
    	    child{ node [vertex] {}
	    	    child{ node (xiy) [vertex] {}
		    	    child {edge from parent node {$c$}}
    			    child {edge from parent node [swap] {$-c$}}
    			    {edge from parent node {$d$}}
        		}
	        	child{ node (yiy) [vertex] {}
		        	child {edge from parent node {$ic$}}
			        child {edge from parent node [swap] {$-ic$}}
			        {edge from parent node [swap]{$id$}}
        		}
        		{edge from parent node [swap] {$e$}}
        };
        
    \node at ($(xiy) + (-4.5mm, 12.5mm)$){$y$};
    \node at ($(xiy) + (3.5mm, 12.5mm)$){$-y$};
    \node at ($(yiy) + (-4.5mm, 12.5mm)$){$iy$};
    \node at ($(yiy) + (3mm, 12.5mm)$){$-iy$};

    \end{tikzpicture} }\]  
    is a vertex in $\mathbb T^G(A)$. However, the underlying $A$-tree is not a vertex in $\mathbb T(A)$.
\end{example}

Note that, just as with $\mathcal{P}^G(A)$, the natural $G$-actions on $\Delta \mathcal P^G(A)$, $\mathcal T^G(A)$, and $\mathbb T^G(A)$ are the trivial ones.  The different varieties of trees are strongly related, as indicated by the next lemma.

\begin{proposition} \label[proposition]{fixedptlemmatrees} 
For any $H\leq G$ there is an isomorphism of categories
\[ \mathcal T(A)^H \cong \mathcal T^H(\downarrow^G_H A) \] 
and a homeomorphism of spaces
\[ \mathbb T(A)^H \cong \mathbb T^H(\downarrow^G_H A).\] 
\end{proposition}

\begin{proof}
    We describe the homeomorphism of spaces; the isomorphism of categories is very similar with the slight wrinkle that we must consider an auxiliary category to define our functors as in the proof of \cref{fixedptrel}.  
    
    Given an $A$-labeled $H$-tree $T$, forgetting the $H$-action on $T$, but remembering the $G$-action on $A$, determines a measured $A$-tree we denote by $\varphi(T)$. Since the isomorphism class of $T$ as an $A$-labeled $H$-tree is smaller than the isomorphism class of $T$ as a measured $A$-tree, this asignment determines a well-defined continuous map
    \[ \varphi\colon \mathbb T^H(\downarrow^G_H A)\to \mathbb{T}(A). \]
    Given an $A$-labeled $H$-tree $T$, observe that the $H$-action is determined entirely by the action of $H$ on the leaves, and thus by the structure of $T$ as simply an $A$-tree. Said another way, $T$ being a $H$-tree is a property, not additional structure, which implies that $\varphi$ is injective.  Since both spaces are finite simplicial complexes, they are compact Hausdorff and so injectivity implies that $\varphi$ is a homeomorphism onto its image.

    It remains to prove $\im(\varphi) =\mathbb{T}(A)^H$.  Note that for any $A$-labeled $H$-tree $T$, the $H$-action on $T$ fixes the isomorphism class of $T$ as an $A$-tree.  Thus the image of $\varphi$ is contained in the $H$-fixed points of $\mathbb{T}(A)$.  Conversely, if a measured $A$-tree $(T, f \colon A \to L(T))$ is $H$-fixed, then for each $h \in H$, $h\cdot T = (T, fh^{-1})$ is in the same equivalence class as $T$, so there exists a tree automorphism $\sigma_h$ such that $fh^{-1} = \sigma_h f$. These $\sigma_h$ define an $H$-action on $T$ so that $f$ is an $H$-map.  If $T'$ is the resulting $A$-labeled $H$ tree, we have $\varphi(T') =T$, so we have shown $\im(\varphi) = \mathbb{T}(A)^H$.  
\end{proof}

\begin{remark} \label[remark]{weirdtrees}
The third option proposed for equivariant partitions at the beginning of \cref{sec:equivar partitions} was non-equivariant surjections $A \twoheadrightarrow B$ between $G$-sets. Using this notion in practice leads to several complications, often due to the fact that the $G$-actions and fixed points do not correspond to natural constructions.

In order to build a new $G$-poset structure $\mathcal P_G(A)$ with these objects, the arrows must be triangles so that the map $B \to B'$ is $G$-equivariant. Therefore the objects must be equivalences classes $[A \onto B]$ modulo $G$-automorphisms of $B$.  If such an equivalence class $[A \onto B]$ is $H$-fixed, the representing map need not be $H$-equivariant; instead, following the proof of \cref{fixedptrel}, the $G$-action on $B$ extends to a $G \times H$-action, and the map is $H$-equivariant with respect to the ``diagonal'' $H$-action on $B$. 

Finally, the trees that correspond to this structure are seemingly problematic, as $G$-trees equipped with a non-equivariant $A$-labeling of the leaves, modulo $G$-automorphisms of the $G$-tree. Describing the elements of an such equivalence class is a non-trivial exercise.  Once again, the $H$-fixed points correspond to $G \times H$-trees such that the $A$-labeling is $H$-equivariant with respect to the diagonal action.
\end{remark}

\section{Comparison of $G$-partition complexes and $G$-trees} \label[section]{sec:comparison}

In this section, we use the equivariant version of Quillen's Theorem A (\cref{thm:A}) to establish $G$-homotopy equivalences 
between the classifying spaces of the equivariant partition complex and several notions of equivariant trees.

To that end, let $\varphi \colon \Delta \mathcal P(\mathbf n) \to \mathcal T(\mathbf n)$ denote the functor from \cite{HM:21} that collapses unary vertices and forgets layerings.  Given a $G$-set $A$, \cref{restrictionlemma,restrictionlemmatrees} imply that this functor induces a $G$-functor
\begin{equation*} 
    \varphi \colon \Delta \mathcal P (A) \to \mathcal T(A).
\end{equation*}

\begin{theorem}\label[theorem]{partitionsastrees}
The induced $G$-functor $\varphi \colon \Delta \mathcal P (A) \to \mathcal T(A)$ is $G$-homotopy final. 
\end{theorem}

\begin{proof}
We adapt the proof in \cite{HM:21} to account for the orbital nature of $T$.

Fix a tree $T$ in $\mathcal T(A)$ and $H \leq G_T = {\rm Stab}_G(T)$. We must show that $(T \downarrow \varphi)^H$ is contractible. We first note that $T$ is an $H$-tree by \cref{fixedptlemmatrees}, and following \cref{fixedptlemma}, we define an \emph{$H$-layering of $T$} to be a layered $A$-labeled $H$-tree $S$, thought of as an object of $\Delta \mathcal P(A)^H = \Delta \mathcal P^H (\downarrow^G_H A)$, such that $\varphi(S) = T$.  Second, let $\Lambda^H(T) \subseteq N \mathcal P^H(\downarrow^G_H A)$ denote the sub-simplicial set spanned by the $H$-equivariant faces of $H$-layerings of $T$. 
Equivalently, $\Lambda^H(T)$ is generated by the elementary $H$-layerings of $T$, all of which live in simplicial degree $|V(T)/H|- 2$. 
Note that a simplex $S' \in \Lambda^H(T)$ is the face of a unique non-degenerate $H$-layering $S$ of $T$, as the face of a layering of $T$ is the layering of a unique face of $T$, and thus $S'$ induces a canonical $H$-map $T = \varphi(S) \to \varphi(S')$ in $\mathcal T^H(A)$.

We can then see that 
\[ (T \downarrow \varphi)^H = T \downarrow \varphi^H \cong \Delta \downarrow \Lambda^H(T). \]
Let $V^L(T)$ denote the $H$-set of maximal vertices of $T$, i.e. vertices whose inputs are all leaves. For any $Hv \in V^L(T)/H$, let $\Lambda^H_{Hv}(T) \subseteq \Lambda^H(T)$ denote the sub-simplicial set generated by the elementary $H$-layerings for which the vertex orbit $Hv$ is in the top layer.  Then $\Lambda^H(T) = \bigcup_{V^L(T)/H} \Lambda^H_{Hv} (T)$. But $\Lambda^H_{Hv}(T)$ is the cone on $\Lambda^H (\partial_{Hv} T)$, where $\partial_{Hv} T$ is the tree obtained from $T$ by removing all the vertices in $Hv$ and their incoming edges. Additionally, given distinct orbits $Hv_1, \dots, Hv_n$, we have that their intersection $\bigcap_{i=1, \dots, n} \Lambda^H_{Hv_i}(T)$ is the cone on $\Lambda(\partial_{Hv_1} \dots \partial_{Hv_n} T)$. Thus $\Lambda^H(T)$ is contractible, and hence so is $\Delta \downarrow \Lambda^H(T) \cong (T \downarrow \varphi)^H$.
\end{proof}

\begin{example}
    Let $G = \set{1,i,-1,-i}$ and $A = \set{x,ix,y,-y,iy,-iy}$ as in \cref{Gtree ex}. 
    Consider the tree $T$ below: 
    \[ \scalebox{0.75}{ \begin{tikzpicture} 
        [level distance=10mm, grow'=up,auto,
        level 2/.style={sibling distance=14mm}, 
        level 3/.style={sibling distance=10mm},
        level 4/.style={sibling distance=7mm}]
        \node {}
            child{node (r) [vertex] {}
                child{}
                child{}
                child{edge from parent [draw=none]}
                child[sibling distance=16mm] {node (s) [vertex] {}
                    child{}
                    child{}
                    child{}
                    child{}
                }  
            };
        
        \node at ($(r) + (-22mm,12.5mm)$){$x$};
        \node at ($(r) + (-9mm,12.5mm)$){$ix$};
        \node at ($(s) + (-16mm,12.5mm)$){$y$};
        \node at ($(s) + (-5mm,12.5mm)$){$iy$};
        \node at ($(s) + (4mm,12.5mm)$){$-y$};
        \node at ($(s) + (14mm,12.5mm)$){$-iy$};
        \end{tikzpicture}}. \]
    Both trees from \cref{gtreelayered ex} are in $\Lambda^G(T)$: the source is an actual $G$-layering of $T$, while the target is a face.
\end{example}

\begin{remark}
    For an $H$-tree $T$ with orbital representation $T/H$, $\Lambda^H(T)$ is not equal to $\Lambda(T/H)$, as unary vertices in $T/H$ can correspond to (an orbit of) non-unary vertices in $T$. Thus, we cannot reduce the proof of \cref{partitionsastrees} to the non-equivariant case, even though the argument of the proof seems to follow as if we could.
\end{remark}

\begin{remark}
    Considering $\mathbf{n}$ with the natural $\Sigma_n$-action, this result implies that the map
    $\varphi \colon \Delta \mathcal P(\mathbf n) \to \mathcal T(\mathbf n)$ is $\Sigma_n$-homotopy final.
\end{remark}

Combining \cref{partitionsastrees,eq Q thm A,cor:last vertex} yields the following comparison.

\begin{corollary}\label[corollary]{cor:zigzag for PA}
    There is a natural zig-zag of $G$-functors 
    \[ \mathcal P(A) \xleftarrow{\ \simeq\ } \Delta \mathcal P(A) \xrightarrow{\ \simeq\ } \mathcal T(A) \] 
    that induce $G$-homotopy equivalences on classifying spaces.
\end{corollary}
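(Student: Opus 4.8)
The plan is to assemble the zig-zag from three ingredients, each already established (or essentially established) earlier in the excerpt. First, the left-hand map $\Delta\mathcal P(A)\to\mathcal P(A)$: this is the last vertex functor, which is a $G$-functor by the discussion preceding \cref{cor:last vertex for P(A)}, and \cref{cor:last vertex for P(A)} states exactly that it is $G$-homotopy initial. Second, the right-hand map $\varphi\colon\Delta\mathcal P(A)\to\mathcal T(A)$ of \cref{phiequation}: \cref{partitionsastrees} states that this is a $G$-functor which is $G$-homotopy final. Both functors are natural in $A$ (in the sense that they are compatible with the descriptions via $\alpha^{\ast}$ from \cref{restrictionlemma,restrictionlemmatrees}), giving the asserted naturality of the zig-zag.

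The remaining step is to invoke the equivariant Quillen Theorem A (referenced as \cref{eq Q thm A}, proved in the appendix) to conclude that a $G$-homotopy initial or $G$-homotopy final $G$-functor induces a $G$-homotopy equivalence on classifying spaces. Applying it to the last vertex functor gives $\abs{\Delta\mathcal P(A)}\simeq_G\abs{\mathcal P(A)}$, and applying it to $\varphi$ gives $\abs{\Delta\mathcal P(A)}\simeq_G\abs{\mathcal T(A)}$. Splicing these together yields the displayed zig-zag
\[ \mathcal P(A)\xleftarrow{\ \simeq\ }\Delta\mathcal P(A)\xrightarrow{\ \simeq\ }\mathcal T(A) \]
on classifying spaces, as claimed.

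There is essentially no obstacle here, since all the hard work has been done: the substantive inputs are \cref{cor:last vertex for P(A)} and \cref{partitionsastrees}, together with the equivariant Theorem A. The only points requiring a sentence of care are (a) checking that ``$G$-homotopy initial'' and ``$G$-homotopy final'' are the hypotheses under which the equivariant Theorem A delivers a $G$-homotopy equivalence (rather than merely a weak $G$-equivalence) — this should follow since the relevant classifying spaces are $G$-CW complexes, so weak $G$-equivalence upgrades to $G$-homotopy equivalence; and (b) noting that the whole zig-zag is natural in $A$, which is immediate from the constructions. Thus the proof is a short citation-chaining argument rather than a computation.
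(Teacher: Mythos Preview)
Your proposal is correct and follows essentially the same route as the paper: the corollary is stated immediately after \cref{partitionsastrees} as a direct combination of \cref{partitionsastrees}, \cref{eq Q thm A}, and \cref{cor:last vertex} (equivalently your \cref{cor:last vertex for P(A)}). Your caveat (a) is unnecessary, since the equivariant Theorem~A as stated in \cref{eq Q thm A} already concludes a genuine $G$-homotopy equivalence, not merely a weak one.
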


As in the non-equivariant case, we have $G$-homeomorphisms between related spaces.

\begin{theorem} \label[theorem]{trees homeo}
    There are $G$-homeomorphisms 
    \[ \vert \mathcal{P}(A) \vert \cong_G \mathbb{T}(A) \cong_G |\mathcal T(A)|. \]
\end{theorem}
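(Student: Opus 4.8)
The plan is to leverage \cref{restrictionlemma} and \cref{restrictionlemmatrees}, which reduce everything to the symmetric group case by pulling back along the classifying homomorphism $\alpha \colon G \to \Sigma_n$. Indeed, by those lemmas we have $\mathcal{P}(A) = \alpha^{\ast}\mathcal{P}(\mathbf n)$ and $\mathcal{T}(A) \cong_G \alpha^{\ast}\mathcal{T}(\mathbf n)$, and $\mathbb{T}(A) \cong_G \alpha^{\ast}\mathbb{T}(\mathbf n)$, while geometric realization commutes with the pullback of the $G$-action (it only changes how the group acts, not the underlying space). So it suffices to produce $\Sigma_n$-homeomorphisms $|\mathcal{P}(\mathbf n)| \cong_{\Sigma_n} \mathbb{T}(\mathbf n) \cong_{\Sigma_n} |\mathcal{T}(\mathbf n)|$ and then apply $\alpha^{\ast}$ throughout; the resulting maps are automatically $G$-equivariant with respect to the pulled-back actions, hence give the desired $G$-homeomorphisms for $A$.

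The first homeomorphism $|\mathcal{P}(\mathbf n)| \cong_{\Sigma_n} \mathbb{T}(\mathbf n)$ is exactly Robinson's theorem \cite[Theorem 2.7]{robinson:2004}; the only thing to check is that his explicit homeomorphism is $\Sigma_n$-equivariant, which is immediate from its construction (it is built naturally from the labeling data, which is what $\Sigma_n$ permutes). For the second homeomorphism $\mathbb{T}(\mathbf n) \cong_{\Sigma_n} |\mathcal{T}(\mathbf n)|$, I would mimic Robinson's argument with $\mathcal{P}(\mathbf n)$ replaced by $\mathcal{T}(\mathbf n)$: a point of $\mathbb{T}(\mathbf n)$ is a measured reduced $\mathbf n$-tree, i.e.\ a reduced $\mathbf n$-tree shape together with a weighting $E^i(T) \to (0,1]$ normalized so the maximum weight is $1$; such data assigns barycentric coordinates to the simplex of $N\mathcal{T}(\mathbf n)$ corresponding to the flag of faces obtained by successively contracting the edges in increasing order of weight (edges of equal weight contracted simultaneously). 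Conversely, a point of a nondegenerate simplex $T_0 \to T_1 \to \cdots \to T_k$ in $N\mathcal{T}(\mathbf n)$ with barycentric coordinates $(t_0, \dots, t_k)$ determines weights on the inner edges of $T_0$: an edge surviving in $T_i$ but contracted in $T_{i+1}$ gets weight $t_{i+1} + \cdots + t_k$ (suitably normalized). One checks these two assignments are mutually inverse and continuous, and that they intertwine face and degeneracy maps so as to descend to a well-defined map on the realization; equivariance is again clear since relabeling leaves commutes with edge contraction.

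The main obstacle — really the only delicate point — is the bookkeeping showing that the two assignments in the previous paragraph are well-defined and mutually inverse across the face and degeneracy identifications of the nerve, in particular handling ties (several inner edges of equal weight) and making sure the normalization ``at least one inner edge has length $1$'' matches the barycentric normalization after collapsing unary vertices. This is the content of \cref{trees homeo}'s proof and is essentially a careful transcription of Robinson's argument; there is no conceptual difficulty once the dictionary between weightings and flags of contractions is set up, and the $\Sigma_n$-equivariance, and then the $G$-equivariance for general $A$, come for free from naturality in the labeling set.
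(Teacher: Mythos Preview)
Your proposal is correct and follows essentially the same route as the paper: reduce to the $\Sigma_n$-case via \cref{restrictionlemma,restrictionlemmatrees}, invoke Robinson for $|\mathcal P(\mathbf n)| \cong_{\Sigma_n} \mathbb T(\mathbf n)$, and build the second homeomorphism by the ``collapse edges below a moving threshold'' process to produce a chain in $N\mathcal T(\mathbf n)$, with the inverse reading off edge weights from barycentric coordinates. One small correction: your weight formula is inverted---an edge surviving in $T_i$ but contracted in $T_{i+1}$ should receive weight $t_0 + \cdots + t_i = 1 - (t_{i+1} + \cdots + t_k)$ (and edges surviving all the way to $T_k$ get weight $1$), which is exactly what the paper writes down; your parenthetical ``suitably normalized'' would catch this in the bookkeeping you flag.
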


\begin{proof}
    The first $G$-homeomorphism follows from \cite[Theorem 2.7]{robinson:2004} and \cref{restrictionlemma,restrictionlemmatrees}, since the restriction of a $\Sigma_n$-homeomorphism is a $G$-homeomorphism.  The second follows from a $\Sigma_n$-homeomorphism $F \colon \mathbb T(\mathbf n) \to |\mathcal T(\mathbf n)|$ of a similar flavor.
    
    Given a measured $\mathbf n$-tree $T$, we get a family of $\mathbf n$-trees $S(t)$, for $0 \leq t \leq 1$, by collapsing all inner edges with lengths less than $t$ and forgetting the remaining lengths.  This family in fact produces a chain of $\mathbf n$-trees, and the barycentric coordinate of $F(T)$ with respect to $S$ is given by the amount of time $S(t) = S$.
    
    Conversely, given a (strict) chain of $\mathbf n$-trees and barycentric coordinates $(S_0 < S_1 < \dots < S_n, (\ell_0, \dots, \ell_n))$, define the measured $\mathbf n$-tree $T$ to have underlying $\mathbf n$-tree $S_0$, with the weights of $E^i(S_n)$ equal to 1, and for $0 \leq k \leq n-1$, the weights of $E^i(S_{k}) \setminus E^i(S_{k+1})$ equal to $1 - \sum_{i=k+1}^{n} \ell_i$.  Here, we are using the fact that if $T'$ is a face of $T$ then there is a canonical inclusion $E^i(T') \subseteq E^i(T)$, which is ensured by \cref{cor:inclusionofinneredges}. It is straightforward to check that these maps are continuous, $\Sigma_n$-equivariant, and inverse to one another.
\end{proof}

\begin{example}
    Consider the following element of $\mathbb T(\mathbf 6)$:
    \[ \scalebox{1}{\begin{tikzpicture} 
    [level distance=10mm, grow'=up, auto,
    every node/.style={font=\tiny},
    level 2/.style={sibling distance=26mm}, 
    level 3/.style={sibling distance=18mm},
    level 4/.style={sibling distance=7mm}]
    
    \node (tree) {}
        child {node [vertex] {} 
	        child{ node (xy) [vertex] {}
		        child[sibling distance=8mm] {edge from parent node {$1$}}
			    child[sibling distance=8mm] {edge from parent node [swap] {$1$}}
			    edge from parent node {$1/2$}
	        }
    	    child{ node [vertex] {}
	    	    child{ node (xiy) [vertex] {}
		    	    child {edge from parent node {$1$}}
    			    child {edge from parent node [swap] {$1$}}
    			    {edge from parent node {$1/2$}}
        		}
	        	child{ node (yiy) [vertex] {}
		        	child {edge from parent node {$1$}}
			        child {edge from parent node [swap] {$1$}}
			        {edge from parent node [swap]{$2/3$}}
        		}
        		{edge from parent node [swap] {$1$}}
	        }
	        {edge from parent node {$1$}}
        };
        
     \node at ($(xy) + (-4mm, 12mm)$){$1$};
    \node at ($(xy) + (4mm, 12mm)$){$2$};
    \node at ($(xiy) + (-4.5mm, 12.5mm)$){$3$};
    \node at ($(xiy) + (3.5mm, 12.5mm)$){$4$};
    \node at ($(yiy) + (-4.5mm, 12.5mm)$){$5$};
    \node at ($(yiy) + (3mm, 12.5mm)$){$6$};
    
        \end{tikzpicture}}. \]
    The map in the proof above sends this element to the $2$-simplex of $\abs{\mathcal{T}(A)}$
    \[ \scalebox{0.6}{
    \begin{tikzpicture} 
    [level distance=10mm, grow'=up, auto,
    every node/.style={font=\tiny},
    level 2/.style={sibling distance=26mm}, 
    level 3/.style={sibling distance=18mm},
    level 4/.style={sibling distance=7mm}]

    \node (tree) {}
        child {node [vertex] {} 
	        child{ node (xy) [vertex] {}
		        child[sibling distance=8mm] {edge from parent node {}}
			    child[sibling distance=8mm] {edge from parent node [swap] {}}
			    edge from parent node {}
	        }
    	    child{ node [vertex] {}
	    	    child{ node (xiy) [vertex] {}
		    	    child {edge from parent node {}}
    			    child {edge from parent node [swap] {}}
    			    {edge from parent node {}}
        		}
	        	child{ node (yiy) [vertex] {}
		        	child {edge from parent node {}}
			        child {edge from parent node [swap] {}}
			        {edge from parent node [swap]{}}
        		}
        		{edge from parent node [swap] {}}
	        }
	        {edge from parent node {}}
        };
    
    \tikzstyle{level 2}=[{sibling distance=12mm}]
    \tikzstyle{level 3}=[{sibling distance=8mm}]   
    \node (tree2) at (5,-5) [style={color=white}] {}
        child{node (r) [vertex] {}
            child {edge from parent node [near end] {}}
            child {edge from parent node [swap] {}}
            child{edge from parent [draw=none]}
            child[level distance=13mm]{node (s) [vertex] {}
                child[level distance=10mm] {edge from parent node [near end] {}}
                child[level distance=10mm] {edge from parent node [very near end]{}} 
                child[level distance=10mm] {edge from parent node [swap, very near end] {}}
                child[level distance=10mm] {edge from parent node [swap, near end ]{}}
                {edge from parent node [swap] {}}
            }
            {edge from parent node {}}
        };
        
    \tikzstyle{level 2}=[{sibling distance=12mm}]
    \tikzstyle{level 3}=[{sibling distance=10mm}]   
    \node (tree3) at (-5,-5) [style={color=white}] {}
	        child{ node (r') [vertex] {}
		        child {edge from parent node [near end] {}}
            child {edge from parent node [swap] {}}
            child{edge from parent [draw=none]}
    	    child{ node (m) [vertex] {}
	    	    child {edge from parent node [near end] {}}
                child {edge from parent node [swap] {}}
	        	child{ node (t) [vertex] {}
		        	child {edge from parent node {}}
			        child {edge from parent node [swap] {}}
			        {edge from parent node [swap]{}}
        		}
        		{edge from parent node [swap] {}}
	        }
	        {edge from parent node {}}
        };
    
    \tikzstyle{every node}=[]
    
     \node at ($(xy) + (-4mm, 12mm)$){$1$};
    \node at ($(xy) + (4mm, 12mm)$){$2$};
    \node at ($(xiy) + (-4.5mm, 12.5mm)$){$3$};
    \node at ($(xiy) + (3.5mm, 12.5mm)$){$4$};
    \node at ($(yiy) + (-4.5mm, 12.5mm)$){$5$};
    \node at ($(yiy) + (3mm, 12.5mm)$){$6$};
    
    \node at ($(r) + (-18mm,12.5mm)$){$1$};
    \node at ($(r) + (-6mm,12.5mm)$){$2$};
    \node at ($(s) + (-12mm,12.5mm)$){$3$};
    \node at ($(s) + (-4mm,12.5mm)$){$4$};
    \node at ($(s) + (4mm,12.5mm)$){$5$};
    \node at ($(s) + (12mm,12.5mm)$){$6$};
    
    \node at ($(r') + (-18mm,12.5mm)$){$1$};
    \node at ($(r') + (-6mm,12.5mm)$){$2$};
    \node at ($(m) + (-10mm,12.5mm)$){$3$};
    \node at ($(m) + (0mm,12.5mm)$){$4$};
    \node at ($(t) + (-4mm, 12.5mm)$){$5$};
    \node at ($(t) + (4mm, 12.5mm)$){$6$};
    
    \draw[thick, dashed, ->] (2,1)--(4,-1);
    \draw[thick, ->] (-2,1)--(-4,-1);
    \draw[thick, ->] (-2,-4)--(2,-4);
    
    \node at (0,-4.5){};
    \node at (4,0.5){};
    \node at (-4,0.5){};
    
    \end{tikzpicture}}, \] 
    with barycentric coordinates $(1/2, 1/6, 1/3)$.
\end{example}

\begin{remark}
    The composite map $|\mathcal T(\mathbf n)| \to |\mathcal P(\mathbf n)|$ is not simplicial, as it does not even send vertices to vertices.  For example, the height 3 binary tree with 4 leaves is sent to the chain $(1)(234) < (1)(2)(34)$ with barycentric coordinates $(1/2, 1/2)$.
\end{remark}

Taking fixed points yields similar results to the above comparing $G$-equivariant partitions and $G$-trees.

\begin{theorem}\label[theorem]{thm:PGA and TGA}
For any $G$-set $A$:
\begin{enumerate}[(a)]
\item The functor
\[ \varphi \colon \Delta \mathcal P^G(A) \longto \mathcal T^G(A) \]
is homotopy final, and so induces a homotopy equivalence on classifying spaces.

\item There is a natural zig-zag of functors
\[ \mathcal P^G(A) \xleftarrow{\ \simeq \ } \Delta \mathcal P^G(A) \xrightarrow{\ \simeq \ } \mathcal T^G(A) \]
that induce homotopy equivalences on classifying spaces.

\item There are homeomorphisms 
\[ |\mathcal P^G(A)| \cong \mathbb T^G(A) \cong |\mathcal T^G(A)|. \]
\end{enumerate}
\end{theorem}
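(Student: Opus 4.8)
The plan is to obtain all three parts by passing to $G$-fixed points in the ``less equivariant'' results already proved. The mechanism is the family of identifications \cref{fixedptrel,fixedptlemma,fixedptlemmatrees}, which realize $\mathcal P^G(A)$, $\Delta\mathcal P^G(A)$, $\mathcal T^G(A)$, and $\mathbb T^G(A)$ as the $G$-fixed points of $\mathcal P(A)$, $\Delta\mathcal P(A)$, $\mathcal T(A)$, and $\mathbb T(A)$, respectively, together with the fact that forming classifying spaces commutes with taking $G$-fixed points, since $(N\mathcal C)^G = N(\mathcal C^G)$ and by \cref{fixed pts vs realizaton}. One bookkeeping observation is needed up front: even though \cref{fixedptrel} is stated as an equivalence of categories, both $\mathcal P(A)^G$ (a subcategory of the poset $\mathcal P(A)$, hence itself a poset) and $\mathcal P^G(A)$ (a poset by \cref{mostequivariantposet}) are skeletal, so that equivalence is automatically an isomorphism of categories; consequently \cref{fixedptrel,fixedptlemma} give \emph{homeomorphisms}, not merely homotopy equivalences, on classifying spaces. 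I would also check the routine fact that the last vertex functor and the functor $\varphi$ of \cref{phiequation} are compatible with all of these identifications.

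For part (a), I would begin with \cref{partitionsastrees}: the $G$-functor $\varphi\colon \Delta\mathcal P(A)\to\mathcal T(A)$ is $G$-homotopy final, so every undercategory $T\downarrow\varphi$ is $G_T$-contractible. The point to exploit is that when $T$ is a $G$-fixed tree, i.e.\ an object of $\mathcal T(A)^G\cong\mathcal T^G(A)$, its stabilizer is all of $G$, so $T\downarrow\varphi$ is $G$-contractible and in particular $(T\downarrow\varphi)^G$ is contractible. Since $(T\downarrow\varphi)^G = T\downarrow\varphi^G$ for the fixed-point functor $\varphi^G\colon \Delta\mathcal P(A)^G\to\mathcal T(A)^G$, this says $\varphi^G$ is homotopy final; transporting along \cref{fixedptlemma,fixedptlemmatrees} (which carry $\varphi^G$ to the $\varphi$ in the statement) shows $\varphi\colon \Delta\mathcal P^G(A)\to\mathcal T^G(A)$ is homotopy final, and Quillen's Theorem A gives the homotopy equivalence of classifying spaces.

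Part (b) uses the same device: \cref{cor:last vertex for P(A)} says the last vertex functor $\Delta\mathcal P(A)\to\mathcal P(A)$ is $G$-homotopy initial, so the identical argument (now with overcategories in place of undercategories) shows that its $G$-fixed-point functor---which under \cref{fixedptrel,fixedptlemma} is the last vertex functor $\Delta\mathcal P^G(A)\to\mathcal P^G(A)$---is homotopy initial, hence a homotopy equivalence. Together with part (a) this produces the asserted zig-zag. For part (c), I would apply $(-)^G$ to the $G$-homeomorphisms $\abs{\mathcal P(A)}\cong_G\mathbb T(A)\cong_G\abs{\mathcal T(A)}$ of \cref{trees homeo}, yielding homeomorphisms $\abs{\mathcal P(A)}^G\cong\mathbb T(A)^G\cong\abs{\mathcal T(A)}^G$; rewriting $\abs{\mathcal P(A)}^G=\abs{\mathcal P(A)^G}\cong\abs{\mathcal P^G(A)}$ and $\abs{\mathcal T(A)}^G=\abs{\mathcal T(A)^G}\cong\abs{\mathcal T^G(A)}$ via \cref{fixed pts vs realizaton}, \cref{fixedptrel}, and \cref{fixedptlemmatrees}, and using $\mathbb T(A)^G\cong\mathbb T^G(A)$ directly from \cref{fixedptlemmatrees}, gives the chain $\abs{\mathcal P^G(A)}\cong\mathbb T^G(A)\cong\abs{\mathcal T^G(A)}$.

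I expect the genuine obstacle to be exactly the ``soft'' bookkeeping flagged above rather than any new homotopy theory: one must confirm that the cited fixed-point comparisons are isomorphisms of categories and homeomorphisms of spaces---not just equivalences and homotopy equivalences---precisely where part (c) asks for homeomorphisms, and that the structure maps (the last vertex functor, $\varphi$, and Robinson's explicit homeomorphism underlying \cref{trees homeo}) are correctly intertwined with them. Once those identifications are pinned down, parts (a)--(c) are formal consequences of \cref{partitionsastrees}, \cref{cor:last vertex for P(A)}, and \cref{trees homeo} together with \cref{fixed pts vs realizaton}.
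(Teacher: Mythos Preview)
Your approach is essentially identical to the paper's: deduce (a)--(c) from \cref{partitionsastrees}, \cref{cor:last vertex}, and \cref{trees homeo} by passing to $G$-fixed points via \cref{fixedptrel,fixedptlemma,fixedptlemmatrees} and \cref{fixed pts vs realizaton}, using that the fixed points of a $G$-homotopy final (initial) functor are homotopy final (initial). Your additional observation that the equivalence in \cref{fixedptrel} is an isomorphism because both sides are posets (hence skeletal) is a welcome clarification---the paper glosses over exactly this point in establishing the homeomorphisms in (c).
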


\begin{proof}
    Using \cref{fixedptlemma,fixedptlemmatrees} and the fact that the fixed points of a $G$-homotopy initial (respectively, final) functor is homotopy initial (respectively, final), part (a) follows from \cref{partitionsastrees}, part (b) from (a) and \cref{cor:last vertex}, and part (c) from \cref{fixed pts vs realizaton,trees homeo}.
\end{proof}

\section{The $G$-homotopy type of $\mathcal{P}(A)$} \label[section]{sec:PA homotopy}

We now use tools developed above to study the homotopy type of the partition complexes $|\mathcal P(A)|$ and $|\mathcal P^G(A)|$. These spaces are related by \cref{fixedptrel,fixed pts vs realizaton}, which identify $|\mathcal P(A)|^H\simeq |\mathcal P^H(\downarrow^G_H A)|$ for all $H\leq G$.  As the $G$-homotopy type of $|\mathcal P(A)|$ depends on the ordinary homotopy type of its fixed points, we view computations of $|\mathcal P^H(A)|$ as stepping stones to understanding the $G$-homotopy type of~$|\mathcal P(A)|$. 

When $G=\Sigma_n$, computations of the $G$-homotopy type of $\mathcal P({\mathbf n})$ have been carried out by Arone and Brantner \cite{AroneBrantner}.  Our results are similar, but our proofs are different and make use of our explicit descriptions of the fixed point categories of $\mathcal P(A)$.

As a preview of some of the results of this section, we begin with a motivating example.

\begin{example}\label[example]{ex:P of C4/e}
    Let $G=C_4 = \{1, -1, i, -i\}$, treated multiplicatively as indicated by the names of the elements.  Let $A = C_4/e =\{1, -1, i, -i\}$ and take the left $G$-action on $A$ by left multiplication.  The action of $G$ on $|\mathcal P(A)|$ has five orbits, which partition the points of $|\mathcal P(A)|$ as in the following table.
  
\renewcommand{\arraystretch}{1.6}
\setlength{\tabcolsep}{10pt}
\begin{center}\fbox{
\begin{tabular}{c:c:c:c}

$(1,-1)(i,-1)$
& $(1)(-1,i,-i)$
& $(1)(i)(-1,-i)$
& \multirow{2}{*}{$(1)(-1)(i,-i)$} \\

\cdashline{1-1}

\multirow{2}{*}{$(1,i)(-1,-i)$} & $(i)(1,-1,-i)$ & $(-1)(i)(1,-i)$ & \\

    & $(-1)(1,i,-i)$ & $(-1)(-i)(1,i)$ & \multirow{2}{*}{$(i)(-i)(1,-1)$} \\

$(1,-i)(-1,i)$  & $(-i)(1,-1,i)$ & $(1)(-i)(-1,i)$ & \\
\end{tabular}}\end{center}
   
    Non-equivariantly, this partition complex consists of the wedge of 6 copies of $S^1$, but for our purposes, we wish to understand the $G$-structure on that space. 
    To this end, we first observe that there are four circles with trivial action that are permuted by the $G$-action, depicted below:\\

\begin{center}
    \adjustbox{width=\textwidth}{
\begin{tikzcd}[column sep = tiny, row sep = small]
	&&& {(-i)(1)(i, -1)} \\
	&& {(-i)(i,1,-1)} && {(1)(-i,i,-1)} \\
	&& {(-i)(i)(1,-1)} && {(1)(-1)(-i,i)} \\
	& {(i)(1,-1,-i)} & {(i)(-i)(1,-1)} && {(-1)(1)(-i,i)} & {(-1)(1, -i, i)} \\
	{(1)(i)(-1,-i)} &&& {(-1,1)(i,-i)} &&& {(-1)(-i)(1, i)} \\
	& {(1)(-1,i,-i)} & {(1)(-1)(i,-i)} && {(-i)(i)(-1,1)} & {(-i)(-1,1,i)} \\
	&& {(-1)(1)(i,-i)} && {(i)(-i)(-1,1)} \\
	&& {(-1)(i, -i, 1)} && {(i)(-i,-1,1)} \\
	&&& {(i)(-1)(-i,1)}
	\arrow[bend right=15, no head, thick, from=1-4, to=2-3]
	\arrow[bend left=15, no head, thick, from=1-4, to=2-5]
	\arrow[bend right=15, no head, thick, from=2-3, to=3-3]
	\arrow[bend left=15, no head, thick, from=2-5, to=3-5]
	\arrow[no head, thick, from=3-3, to=5-4]
	\arrow[no head, thick, from=3-5, to=5-4]
	\arrow[bend right = 30, no head, thick, from=4-2, to=5-1]
	\arrow[no head, thick, from=4-3, to=4-2]
	\arrow[no head, thick, from=4-3, to=5-4]
	\arrow[no head, thick, from=4-5, to=4-6]
	\arrow[bend left = 30, no head, thick, from=4-6, to=5-7]
	\arrow[no head, thick, from=5-4, to=4-5]
	\arrow[no head, thick, from=5-4, to=6-5]
	\arrow[no head, thick, from=5-4, to=7-3]
	\arrow[no head, thick, from=5-4, to=7-5]
	\arrow[bend left = 30, no head, thick, from=6-2, to=5-1]
	\arrow[no head, thick, from=6-3, to=5-4]
	\arrow[no head, thick, from=6-3, to=6-2]
	\arrow[no head, thick, from=6-5, to=6-6]
	\arrow[bend right = 30, no head, thick, from=6-6, to=5-7]
	\arrow[bend right=15, no head, thick, from=7-3, to=8-3]
	\arrow[bend left=15, no head, thick, from=7-5, to=8-5]
	\arrow[bend right = 30, no head, thick, from=8-3, to=9-4]
	\arrow[bend right = 30, no head, thick, from=9-4, to=8-5]
\end{tikzcd}.
}
\end{center}

We can then identify another circle given by the loop  depicted below:
      \[\begin{tikzpicture}
    \node (A) at ({3.3*cos(180)},{3.3*sin(180)}) {$(1,i)(-1,-i)$};
    \node (D) at ({3.3*cos(0)},{3.3*sin(0)}) {$(i)(-i)(1,-1)$};
    
    \node (B) at ({3.3*cos(120)},{2*sin(120)}) {$(-1)(-i)(1,i)$};
    \node (C) at ({3.3*cos(60)},{2*sin(60)}) {$(-i)(1,-1,i)$};
    
    \node (E) at ({3.3*cos(240)},{2*sin(240)}) {$(1)(i)(-1,-i)$};
    \node (F) at ({3.3*cos(300)},{2*sin(300)}) {$(i)(1,-1,-i)$};

    \draw (A) to[bend left=20] (B);
\draw (B) to[bend left=20] (C);
\draw (C) to[bend left=20] (D);
\draw (D) to[bend left=20] (F);
\draw (F) to[bend left=20] (E);
\draw (E) to[bend left=20] (A);
\end{tikzpicture}.\]
This circle is $G$-invariant, but not $G$-fixed, and is a copy of the representation sphere $S^\sigma$.  We leave it to the reader to identify the second such $S^\sigma$ in the diagram.

    Thus the partition complex $|\mathcal P(A)|$ is the wedge of four copies of $S^1$ with trivial $G$-action, thought of as $S^1 \wedge (C_4/e)_+$, and two representation spheres $S^\sigma$, thought of as $S^\sigma \wedge (C_4/C_2)_+$.
\end{example}

A study of the category $\mathcal P^G(A)$ reveals that its homotopy type depends heavily on the $G$-set $A$; more precisely, on whether $A$ is $H$-isovariant for some subgroup $H\leq G$, meaning there is a $G$-isomorphism $A\cong \amalg_{i=1}^n G/H$ for some $n$.
With these options in mind, we divide our approach in two cases. We show that when $A$ is not $H$-isovariant for any $H\leq G$, then the partition complex is contractible (\cref{notIsovariantContractible}) but the isovariant case is more homotopically interesting (\cref{EqTreesHType}), as \cref{ex:P of C4/e} demonstrates.

\subsection{Case 1: $A$ is not $H$-isovariant}

We first prove that if $A$ is not $H$-isovariant for any $H\leq G$, then $\mathcal P^G(A)$ is contractible.  Note that in this case $A$ must have at least two orbits, since otherwise we would have $A\cong G/G_a$ for any $a\in A$.  The first of our results only requires that $A$ have at least two orbits, so we state it in this generality.

\begin{notation}
    Let $\mathcal P_2^G(A)\subseteq \mathcal P^G(A)$ denote the full subcategory on objects $A\twoheadrightarrow B$ where $B$ has at least two $G$-orbits.
\end{notation}

\begin{lemma}\label[lemma]{contractible subcat}
    Suppose that $A$ is a non-trivial $G$-set with at least two orbits.  Then $\mathcal P_2^G(A)$ is contractible.
\end{lemma}

\begin{proof}
   Let $\mathcal C\subseteq \mathcal P_2^G(A) $ be the full subcategory on objects $f\colon A\twoheadrightarrow B$ where $B$ has trivial $G$ action.  Since $A$ is not trivial and has at least two orbits, the partition $A\twoheadrightarrow A/G$ is neither discrete nor indiscrete and thus is an object in $\mathcal C$. This object is initial in $\mathcal C$ and so $\mathcal C$ is contractible.

    Let $I\colon \mathcal C \to \mathcal P_2^G(A)$ denote the inclusion; we want to show that this functor is a homotopy equivalence.  By Quillen's Theorem A, it suffices to prove that for any object $f\colon A\twoheadrightarrow B$ in $\mathcal P_2^G(A)$, the category $f\downarrow I$ has an initial object.  By the definition of $\mathcal P^G_2(A)$, the $G$-set $B$ must have at least two orbits so $|B/G|>1$.  Let $\pi\colon B\to B/G$ denote the quotient map.  The pair $(\pi f\colon A\twoheadrightarrow B/G, B\twoheadrightarrow B/G)$ is an object in $f\downarrow I$ and is initial since any equivariant map from $A$ to a set with trivial $G$-action which factors through $B$ must also factor through $B/G$.
\end{proof}

Since almost all $G$-sets have more than one orbit, $\mathcal P^G_2(A)\subseteq \mathcal P^G(A)$ is a rather large subcategory.  We will see presently that the inclusion of this subcategory induces a homotopy equivalence whenever $A$ is not $H$-isovariant for any $H\leq G$. The argument follows Quillen's Theorem A:  if $I\colon \mathcal P^G_2(A)\to \mathcal P^G(A)$ is the inclusion, we show that the overcategory $I\downarrow (f\colon A\twoheadrightarrow B)$ is contractible for any $f\colon A\twoheadrightarrow B$ in $\mathcal P^G(A)$.  When $A = \amalg_{i=1}^nG/H$ is $H$-isovariant, our arguments show that the overcategory $I\downarrow f$ is either contractible or categorically equivalent to the partition poset $\mathcal P(\mathbf{n})$, which is never contractible.  Before proceeding, we need some notation.

\begin{definition}
 Let $H \leq G$ be a proper subgroup and let $A$ be a finite $G$-set.  We say that $A$ is \emph{$H$-induced} if there is an $G$-map $A\twoheadrightarrow G/H$. 
\end{definition}

\begin{remark}
   Let $A'$ be an $H$-set, and let $*$ denote the $H$-set with one point and trivial action.  Applying the induction functor $\uparrow_H^G\colon H\Fin\to G\Fin$ to the map $A'\to \ast$ yields a $G$-map $\uparrow_H^G(A')\to \uparrow_H^G(*)\cong G/H$.  This construction gives an equivalence of categories $H\Fin \simeq G\Fin \downarrow (G/H)$, which justifies our terminology for $H$-induced sets.  In particular, $A$ is $H$-induced if and only if there is a finite $H$-set $A'$ with $A\cong \uparrow_H^G (A')$.
\end{remark}

The following result is equivalent, by \cref{fixedptrel} above, to Lemma 6.3 in \cite{AroneBrantner} in the case where $G=\Sigma_n$. 

\begin{proposition} \label[proposition]{notIsovariantContractible}
    If $A$ is not $H$-isovariant for any $H\leq G$ then $\mathcal P^G(A)$ is contractible.
\end{proposition}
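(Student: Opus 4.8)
The plan is to prove the proposition by showing that the inclusion $I \colon \mathcal P_2^G(A) \hookrightarrow \mathcal P^G(A)$ induces a homotopy equivalence on classifying spaces; since \cref{contractible subcat} tells us $\mathcal P_2^G(A)$ is contractible, this immediately gives the result. Because $\mathcal P^G(A)$ is a poset, each overcategory $I \downarrow f$ is simply the subposet of $\mathcal P_2^G(A)$ on objects $g$ with $g \leq f$, and by Quillen's Theorem A it suffices to show this is contractible for every object $f \colon A \twoheadrightarrow B$ of $\mathcal P^G(A)$. I would split the argument into two cases according to the number of $G$-orbits of $B$.

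If $B$ has at least two $G$-orbits, then $f$ itself lies in $\mathcal P_2^G(A)$ and is the maximum element of $I \downarrow f$, so the overcategory is contractible. The substantive case is when $B$ is transitive, so that by \cref{rmk:OS thm} we may write $B \cong G/K$; here $K$ is a proper subgroup, since $f$ is non-trivial and hence $B \neq \mathbf 1$. In this case the idea is to exploit the equivalence of categories $G\Fin \downarrow (G/K) \simeq K\Fin$, which sends an object over $G/K$ to its fiber over $eK$; write $A_0 := f^{-1}(eK)$, a finite $K$-set. First I would observe that for any factorization $A \twoheadrightarrow B' \twoheadrightarrow G/K$ witnessing an object $g \leq f$, surjectivity of $A \twoheadrightarrow B'$ forces the map $B' \to G/K$ to be unique, so that any $G$-isomorphism between such $B'$'s compatible with the maps out of $A$ automatically lies over $G/K$. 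Tracing definitions through the equivalence, I would then identify $I \downarrow f$ with $\mathcal P_2^K(A_0)$: the objects $g \leq f$ whose target has at least two $G$-orbits correspond precisely to non-bijective $K$-surjections $A_0 \twoheadrightarrow B_0'$ with $B_0'$ having at least two $K$-orbits, taken modulo $\Aut_K(B_0')$ — the non-indiscreteness conditions being automatic, since $[G:K] > 1$ and $B_0'$ already has several orbits.

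To conclude, I would invoke \cref{contractible subcat} a second time, now for the group $K$ and the $K$-set $A_0$. Its hypothesis holds: if $A_0 \cong \coprod_{i=1}^{n} K/L$ for some $L \leq K$, then $A \cong G \times_K A_0 \cong \coprod_{i=1}^{n} G/L$ would be $L$-isovariant, contradicting the assumption on $A$. Hence $\mathcal P_2^K(A_0)$ is contractible, so $I \downarrow f$ is contractible in this case as well, and Quillen's Theorem A finishes the proof.

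The main obstacle, as I see it, is the bookkeeping in the transitive case: one must check carefully that passing through $G\Fin \downarrow (G/K) \simeq K\Fin$ correctly matches up morphisms, the ``modulo automorphisms'' equivalence relations on objects, and the non-triviality conditions, so that the identification $I \downarrow f \cong \mathcal P_2^K(A_0)$ really holds. Once that identification is secured, the rest reduces formally to Quillen's Theorem A together with a second application of \cref{contractible subcat}.
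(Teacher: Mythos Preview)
Your proposal is correct and follows essentially the same argument as the paper: both prove that the inclusion $I\colon \mathcal P_2^G(A)\hookrightarrow \mathcal P^G(A)$ is a homotopy equivalence via Quillen's Theorem A, handling the transitive-target case by passing through the equivalence $G\Fin\downarrow(G/K)\simeq K\Fin$ to identify $I\downarrow f$ with $\mathcal P_2^K(A_0)$ and then applying \cref{contractible subcat} again. Your discussion of the bookkeeping (uniqueness of the map $B'\to G/K$, compatibility of the automorphism equivalence relations) is in fact more careful than the paper's, which simply asserts the identification.
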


\begin{proof}
    Let $I\colon \mathcal P^G_2(A)\to \mathcal P^G(A)$ denote the inclusion of the full subcategory on objects $A \twoheadrightarrow B$ where $B$ has at least two orbits.  We want to show, under our hypotheses, that $I$ induces a homotopy equivalence so the result follows from \cref{contractible subcat}.  We prove that for any $f\colon A\twoheadrightarrow B$ in $\mathcal P^G(A)$, the undercategory $I\downarrow f$ is contractible, and hence our claim follows from (the dual of) Quillen's Theorem A. 

    If $f\colon A\twoheadrightarrow B$ is an object in $P^G_2(A)$ then $I\downarrow f$ is contractible since the identity on $f$ is a terminal object.  Suppose then that $f\colon A\twoheadrightarrow B$ is not in $\mathcal P^G_2(A)$.  Then $B$ has a single orbit and we may assume without loss of generality that $B=G/K$ for some proper subgroup $K\leq G$. In particular, $A$ is $K$-induced and so there is a finite $K$-set $A'$ so that $A\cong \uparrow_K^G (A')$.  
    
    We claim there is an equivalence of categories $I\downarrow f\simeq \mathcal P^K_2(A')$.  If so, then the fact that $A$ is not $H$-isovariant for any $H\leq G$ implies $A'$ is not $H$-isovariant for any $H\leq K$.  In particular, $A'$ has at least two orbits and is not a trivial $K$-set and so $\mathcal P^K_2(A')$ is contractible by \cref{contractible subcat}.

    An object in the category $I\downarrow f$ consists of a pair $(g\colon A\twoheadrightarrow B', h\colon B'\to G/H)$ in $G\Fin$ such that $f=hg$ and $B'$ has more than one orbit.  The claim follows from the observation that $I\downarrow f$ is equivalent to the subcategory of $(f\colon A\twoheadrightarrow G/H)\downarrow (\Set^G\downarrow (G/H))$ consisting of surjections from $A$ onto objects with at least two orbits.  Since the equivalence $H\Fin\simeq G\Fin \downarrow (G/H)$ preserves both surjections and objects with at least two orbits, we see that $I\downarrow f$ is equivalent to the subcategory of $A'\downarrow H\Fin$ with only surjections onto $H$-sets with more than one orbit, which is exactly $\mathcal P^H_2(A')$.
\end{proof}

\begin{remark}
    In the notation of the above proof, it is always true that $I\downarrow f\simeq \mathcal P^H_2(A')$.  When $A$ is $H$-isovariant, $A'$ is a trivial $H$-set and we have an equivalence of categories $\mathcal P^H_2(A')\simeq \mathcal P(|A'|)$, which is never contractible.  
\end{remark}

\subsection{Case 2: $A$ is $H$-isovariant}

We now turn our attention to studying $\mathcal P^G(A)$ when $A$ is $H$-isovariant.
The simplest case is when $A=G/H$ is a transitive $G$-set, and we can identify $\mathcal{P}^G(A)$ with an equivalent category. 

First, recall that, given $H,K\leq G$, the set of $G$-maps from $G/H$ to $G/K$ is in bijection with the set of $g\in G$ such that $gHg^{-1}\subseteq K$.  When $H\leq K$, the $G$-map $G/H\to G/K$ corresponding to $eHe^{-1}\subseteq K$ is given by $gH\mapsto gK$; we call this map the \emph{canonical quotient}. 

\begin{proposition}
     There is an equivalence of categories between the poset $\mathcal P^G(G/H)$ and the poset $S(G,H)$ of subgroups $K$ of $G$ such that $H \lneq K \lneq G$. 
\end{proposition}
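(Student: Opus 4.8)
The plan is to construct mutually inverse (up to natural isomorphism) functors between $\mathcal P^G(G/H)$ and the poset $S(G,H)$. First I would describe the objects of $\mathcal P^G(G/H)$ concretely: an object is an equivalence class of $G$-surjections $G/H \twoheadrightarrow B$ modulo $\Aut_G(B)$, and by \cref{rmk:OS thm} together with the surjectivity and transitivity of $G/H$, the target $B$ must itself be a transitive $G$-set, hence $G$-isomorphic to $G/K$ for some $K \leq G$. So every object is represented by the canonical quotient $G/H \to G/K$ for a subgroup $K$ with $H \leq K$; the exclusion of the trivial partitions forces $H \lneq K \lneq G$. The key point to nail down here is that two such canonical quotients $G/H \to G/K$ and $G/H \to G/K'$ lie in the same equivalence class if and only if $K = K'$: a $G$-isomorphism $G/K \xrightarrow{\cong} G/K'$ commuting with the quotients from $G/H$ forces $K$ and $K'$ to be conjugate, and the commuting condition (that the basepoint $eH$ maps consistently) pins down the conjugating element to normalize appropriately, yielding $K = K'$. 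This gives a bijection on objects with $S(G,H)$.

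Next I would check the morphisms match up. A morphism in $\mathcal P^G(G/H)$ from $G/H \to G/K$ to $G/H \to G/K'$ is a $G$-map $G/K \to G/K'$ making the triangle over $G/H$ commute; using the description of $G$-maps between orbits (a $G$-map $G/K \to G/K'$ corresponds to $g$ with $gKg^{-1} \subseteq K'$), the commutativity over $G/H$ with the basepoint $eH \mapsto eK \mapsto$ (image) $= eK'$ forces the representing element to be $e$, so such a morphism exists exactly when $K \subseteq K'$, and is then unique. Hence the assignment $K \mapsto (G/H \to G/K)$ is a fully faithful, essentially surjective functor $S(G,H) \to \mathcal P^G(G/H)$ — both categories being posets, this is precisely an equivalence of categories. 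I would phrase the construction in this direction (from the combinatorial poset into $\mathcal P^G$) since it avoids choosing representatives.

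The main obstacle I expect is the bookkeeping around conjugacy: a priori the target of a $G$-surjection from $G/H$ is only a transitive $G$-set, so it is $G/K$ for $K$ well-defined only up to conjugacy, and one must argue that the canonical quotient map rigidifies this — that is, that remembering the map $G/H \twoheadrightarrow G/K$ and not just $G/K$ forces $K$ (containing the chosen $H$) to be honestly well-defined, not just up to conjugacy. The clean way to handle this is to always normalize so that the composite $eH \mapsto eK$, i.e. to represent every object by the canonical quotient $gH \mapsto gK$ for an actual subgroup $K \supseteq H$, and then to verify that an automorphism of $G/K$ over $G/H$ must be the identity because it must fix $eK$ (being the image of $eH$). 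Once this rigidity is in place, everything else is a direct unwinding of the definitions, and the functoriality and the verification that it is an equivalence are routine.
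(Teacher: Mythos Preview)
Your proposal is correct and follows essentially the same approach as the paper: define the functor $S(G,H) \to \mathcal P^G(G/H)$ sending $K$ to the canonical quotient $G/H \twoheadrightarrow G/K$, then verify it is fully faithful and essentially surjective using the Orbit--Stabilizer identification and the observation that a map $G/K \to G/K'$ under $G/H$ must send $eK$ to $eK'$. Your extra care about conjugacy and injectivity on objects is not strictly needed---it follows automatically from full faithfulness between posets---but it is not wrong, and the paper's proof is otherwise the same in structure and content.
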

    
\begin{proof}
    Define a functor $I\colon S(G,H)\to \mathcal P^G(G/H)$ that sends a subgroup $K$ to the class of the canonical quotient $G/H\twoheadrightarrow G/K$.  On morphisms, $I$ sends an inclusion of subgroups $K\leq K'$ to the canonical quotient $G/K\twoheadrightarrow G/K'$.  
    
    As the domain category is a poset, $I$ is necessarily faithful. To see that $I$ is full, note that a morphism in $\mathcal P^G(G/H)$ between canonical quotients $(G/H\twoheadrightarrow G/K)\to (G/H\twoheadrightarrow G/K')$ corresponds to a map $G/K\to G/K'$ sending $eK$ to $eK'$.  Such a map exists, and is a canonical quotient, if and only if $K\leq K'$.  
        
    Finally, we show $I$ is essentially surjective on objects. If $f\colon G/H\twoheadrightarrow B$ is surjective, then $B$ must be a transitive $G$-set, and hence
    $B\cong G/K$ where $K$ is the stabilizer of $f(eH)$.  Since the stabilizer of $eH$ is $H$, we have $H\leq K$ and $f$ is equivalent to the canonical quotient $G/H\twoheadrightarrow G/K$. 
\end{proof}

\begin{remark} \label[remark]{subgroupLatticeHType}
    The space $ |\mathcal P^G(G/H)|$ is generally non-contractible.  For example, when $G=\Sigma_3$, the space $\mathcal P^{\Sigma_3}(\Sigma_3/e)$ is equivalent to four points.  Interestingly, understanding the general homotopy type of the realization of the posets $S(G,H)$ is an open problem. When $G$ is solvable, Kratzer and Th\'evenaz show that $|S(G,e)|$ is equivalent to a wedge of equidimensional spheres \cite{KratzerThevenaz}.  However, this no longer holds for general $G$; Kramarev and Lokutsievskiy show that when $G=PSL(2,\mathbb{F}_7)$, the space $|S(G,e)|$ is homotopy equivalent to a wedge of $48$ copies of $S^1$ and $48$ copies of $S^2$ \cite{KramarevLokutsievski}. 
\end{remark}

We are left to understand the homotopy type of $\mathcal{P}^G(A)$ when $A$ is $H$-isovariant with more than one orbit. In \cref{EqTreesHType} below, we show that the homotopy type of $\mathcal P^G(A)$ for such $A$ is entirely determined by the subgroup $H\leq G$ and the number of orbits.  When $H=G$, we recover the non-equivariant partition complex, so for the remainder of the section we assume $H<G$ is a proper subgroup.

First, we fix some notation.  For any object $\alpha$ in $\mathcal P^G(A)$, let $\alpha^{\perp}$ denote the collection of objects in $\mathcal P^G(A)$ orthogonal to $\alpha$.  Thinking of $\mathcal P^G(A)$ as a poset, an element $\beta$ is in $\alpha^{\perp}$ if there is no element $\omega$ which is either a lower or upper bound for $\beta$ and $\alpha$.
    
\begin{lemma} \label[lemma]{perp lemma}
    Let $A = \amalg_{i=1}^n G/H$ for $n>1$, and let $\alpha\colon A \twoheadrightarrow \amalg_{i=1}^n G/G$ be the union of $n$ collapse maps.  Then $\alpha^{\perp} \subseteq \mathcal P^G(A)$ consists of all objects $\beta\colon A\twoheadrightarrow B$ where $B\cong G/H$.
\end{lemma}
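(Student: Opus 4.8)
The plan is to carry out the comparison inside the lattice of \emph{all} $G$-equivariant partitions of $A$, i.e.\ the poset $\mathcal P^G(A)$ enlarged by the discrete and indiscrete partitions. There meets and joins exist, computed blockwise as the common refinement and as the partition generated by merging blocks, and both operations preserve $G$-invariance. Because $H \lneq G$ and $n > 1$, the partition $\alpha$ has $n \geq 2$ blocks, each of cardinality $\abs{G/H} \geq 2$, so $\alpha$ is strictly coarser than discrete and strictly finer than indiscrete. Hence for any $\beta$ the meet $\alpha \wedge \beta \leq \alpha$ is never indiscrete and the join $\alpha \vee \beta \geq \alpha$ is never discrete, so $\beta$ admits a common lower (resp.\ upper) bound with $\alpha$ inside $\mathcal P^G(A)$ exactly when $\alpha \wedge \beta$ is not discrete (resp.\ $\alpha \vee \beta$ is not indiscrete). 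The lemma then reduces to the claim that $\alpha \wedge \beta$ is discrete and $\alpha \vee \beta$ is indiscrete if and only if $B \cong G/H$.

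First I would set up a dictionary. Writing $A = \coprod_{i=1}^n G/H$ and letting $B_i \subseteq B$ denote the image of the $i$-th summand under $\beta$, the blocks of $\alpha \wedge \beta$ are precisely the fibres of the maps $\beta|_{G/H} \colon G/H \to B$; thus $\alpha \wedge \beta$ is discrete iff each $\beta|_{G/H}$ is injective, equivalently iff each $B_i \cong G/H$. Dually, $\alpha \vee \beta$ merges any two fibres of $\beta$ meeting a common summand, so it is indiscrete as soon as some $B_i$ equals all of $B$.

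For the forward inclusion, if $B \cong G/H$ then $B$ is transitive, so every nonempty $G$-subset $B_i$ equals $B$; hence $\beta|_{G/H}\colon G/H \twoheadrightarrow B$ is a surjection between $G$-sets of equal cardinality, hence a bijection, and the dictionary gives that $\alpha \wedge \beta$ is discrete and $\alpha \vee \beta$ is indiscrete, so $\beta \in \alpha^{\perp}$. For the reverse inclusion, suppose $\beta \in \alpha^{\perp}$. Discreteness of $\alpha \wedge \beta$ forces each $\beta|_{G/H}$ to be injective, hence each $B_i \cong G/H$ and in particular transitive, so each $B_i$ lies in a single $G$-orbit of $B$. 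If $B$ had at least two orbits, then $\beta' \colon A \xrightarrow{\beta} B \twoheadrightarrow B/G$ would be a coarsening of $\beta$, and also a coarsening of $\alpha$ since each summand lands in one $B_i$ and hence in one orbit of $B$; as $\abs{B/G} \geq 2$ makes $\beta'$ non-indiscrete while $\beta' \geq \alpha$ makes it non-discrete, $\beta'$ would be a common upper bound in $\mathcal P^G(A)$, contradicting orthogonality. So $B$ is transitive, whence $B_1 = B$ and $B \cong B_1 \cong G/H$.

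I expect the only real work to lie in verifying the blockwise dictionary of the second paragraph, and in being careful that the candidate bounds $\alpha \wedge \beta$ and $\beta'$ are genuine, non-trivial objects of $\mathcal P^G(A)$ — which is exactly where the hypotheses $H \lneq G$ and $n > 1$ are used, and which requires translating fluently between the ``surjection out of $A$'' and the ``partition of $A$'' descriptions of $\mathcal P^G(A)$.
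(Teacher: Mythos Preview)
Your argument is correct. The lattice-theoretic framing via $\alpha \wedge \beta$ and $\alpha \vee \beta$ is sound: since $\alpha$ is neither discrete nor indiscrete, the meet with $\beta$ detects the existence of a common lower bound in $\mathcal P^G(A)$ and the join detects an upper bound, and your blockwise dictionary (meet discrete $\Leftrightarrow$ each $\beta|_{G/H}$ injective; join indiscrete whenever some $B_i = B$) is accurate.

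The paper's proof reaches the same conclusion by constructing explicit bounds rather than computing the meet and join. It first argues that $B$ must be transitive, since otherwise $A \twoheadrightarrow B \twoheadrightarrow B/G$ is a common upper bound (this is exactly your $\beta'$, and note that transitivity of each $B_i$ is automatic from transitivity of $G/H$, so you did not actually need the meet condition to run this step). Having $B \cong G/K$ with $H$ subconjugate to $K$, the paper then observes that if $K$ is strictly larger than $H$ up to conjugacy, the componentwise quotient $A \twoheadrightarrow \amalg_{i=1}^n G/K$ furnishes a common lower bound. Your route instead uses the meet: $\alpha \wedge \beta$ discrete forces each restriction to be injective, whence $B_i \cong G/H$, and combined with transitivity of $B$ this gives $B \cong G/H$. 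The two arguments are equivalent in content; your lattice packaging makes the logical structure cleaner and avoids naming the auxiliary lower bound $\amalg G/K$ explicitly, while the paper's version is a bit more direct and sidesteps verifying that the enlarged poset is a lattice with the claimed meet and join formulas.
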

    
\begin{proof}
    Let $\beta\colon A\twoheadrightarrow B$ represent an object in $\alpha^{\perp}$.  Then $B$ has only one orbit; otherwise, the map $A\twoheadrightarrow B\twoheadrightarrow B/G$ is an upper bound for $\alpha$ and $\beta$.  Thus $B\cong G/K$ for some subgroup $K\leq G$.  Since there is a $G$-map $A\to G/K$, $H$ must be subconjugate to $K$.
    
    If $K$ is not conjugate to $H$, the map $A\twoheadrightarrow \amalg_{i=1}^{n} G/K$ is a lower bound for $\beta$ and $\alpha$.  It follows that everything in $\alpha^{\perp}$ is of the form in the statement.  That all such objects are in $\alpha^{\perp}$ follows from similar arguments.
\end{proof}

\begin{proposition} \label[proposition]{EqTreesHType}
    There is a homotopy equivalence
    \[ |\mathcal P^G(\amalg_{i=1}^n G/H)|\simeq \bigvee\limits_{|W_G(H)|^{n-1}} |\mathcal P^G(G/H)|^{\diamond}\wedge |\mathcal P(\mathbf n)|^{\diamond} \]
    where $W_G(H) = N_G(H)/H$ is the Weyl group and $(-)^{\diamond}$ is the unreduced suspension.
\end{proposition}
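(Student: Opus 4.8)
The plan is to realize $\abs{\mathcal P^G(A)}$ as a wedge of unreduced suspensions of joins of links, built around the special element $\alpha$ and its orthogonal complement from \cref{perp lemma}, and then to turn joins into smashes.

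Write $P=\mathcal P^G(A)$ and let $\alpha\colon A=\amalg_{i=1}^n G/H\twoheadrightarrow \amalg_{i=1}^n G/G$ be the object of \cref{perp lemma}. First I record the local structure of $P$ near the antichain $\alpha^\perp$. By \cref{perp lemma}, $\alpha^\perp$ consists of the partitions $\beta\colon A\twoheadrightarrow B$ with $B\cong G/H$; each such $\beta$ is an isomorphism on every copy of $G/H$, hence is encoded by an element of $W_G(H)^n$ modulo the diagonal $W_G(H)$-action, so $\alpha^\perp$ is an antichain of exactly $|W_G(H)|^{n-1}$ elements. Fix $\beta\in\alpha^\perp$. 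A strict refinement of $\beta$ must further subdivide the transitive $G$-set $B\cong G/H$, and the only equivariant freedom is in deciding how the $n$ copies of $G/H$ (all identified in $B$) are grouped; excluding the discrete partition of $A$ (which is not in $P$) and $\beta$ itself, this identifies the subposet $P_{<\beta}$ of strict refinements of $\beta$ with the poset of nontrivial partitions of the $n$ copies, i.e.\ $P_{<\beta}\cong\mathcal P(\mathbf n)$. Dually, a strict coarsening of $\beta$ is a proper nontrivial quotient $G$-set of $B\cong G/H$, so the subposet $P_{>\beta}$ of strict coarsenings is the poset $S(G,H)$ of subgroups $H\lneq K\lneq G$, which by the proposition above ($\mathcal P^G(G/H)\simeq S(G,H)$) has $\abs{S(G,H)}\simeq \abs{\mathcal P^G(G/H)}$. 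Hence the link of the vertex $\beta$ in the order complex of $P$ is $\operatorname{lk}(\beta)=\abs{P_{<\beta}}\ast\abs{P_{>\beta}}\cong \abs{\mathcal P(\mathbf n)}\ast\abs{\mathcal P^G(G/H)}$.

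Next I decompose the order complex $NP$. Since $\alpha^\perp$ is an antichain, every simplex of $NP$ contains at most one vertex of $\alpha^\perp$, so $NP$ is the union of the full subcomplex $N$ on the vertex set $P\setminus\alpha^\perp$ together with, for each $\beta\in\alpha^\perp$, the closed star $\overline{\operatorname{st}}(\beta)$, a cone on $\operatorname{lk}(\beta)$ with apex $\beta$. Because $P_{<\beta}\subseteq\mathcal P_2^G(A)$ and $P_{>\beta}$ both avoid $\alpha^\perp$, one checks that $\operatorname{lk}(\beta)\subseteq\abs{N}$, that $\overline{\operatorname{st}}(\beta)\cap\abs{N}=\operatorname{lk}(\beta)$, and that $\overline{\operatorname{st}}(\beta)\cap\overline{\operatorname{st}}(\beta')\subseteq\abs{N}$ for $\beta\neq\beta'$. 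Granting the key claim that $\abs{N}$ is contractible, collapsing $\abs{N}$ gives a homotopy equivalence
\[ \abs{\mathcal P^G(A)}\ \simeq\ \bigvee_{\beta\in\alpha^\perp}\overline{\operatorname{st}}(\beta)/\operatorname{lk}(\beta)\ =\ \bigvee_{\beta\in\alpha^\perp}\operatorname{lk}(\beta)^{\diamond}\ \simeq\ \bigvee_{|W_G(H)|^{n-1}}\bigl(\abs{\mathcal P(\mathbf n)}\ast\abs{\mathcal P^G(G/H)}\bigr)^{\diamond}. \]
Finally, for CW complexes $X,Y$ one has $(X\ast Y)^{\diamond}=S^0\ast X\ast Y=(S^0\ast X)\ast Y=X^{\diamond}\ast Y\simeq X^{\diamond}\wedge Y^{\diamond}$, using associativity of the join and the identity $A\ast B\simeq A\wedge B^{\diamond}$ for a well-pointed $A$; substituting $X=\abs{\mathcal P(\mathbf n)}$ and $Y=\abs{\mathcal P^G(G/H)}$ yields the stated formula.

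The main obstacle is the key claim: $\abs{N}=\abs{\mathcal P^G(A)\setminus\alpha^\perp}$ is contractible. The intended route is Quillen's Theorem A, in the spirit of \cref{contractible subcat} and the proof of \cref{notIsovariantContractible}. First, the full subcategory $\mathcal P_2^G(A)\subseteq N$ of partitions whose target has at least two orbits is contractible: the trivial-target partitions have initial object $A\twoheadrightarrow A/G=\mathbf n$ (nontrivial since $H$ is proper and $n>1$), and for any $f\colon A\twoheadrightarrow B$ with $\geq 2$ orbits the comma category $f\downarrow(\text{trivial-target partitions})$ has initial object $A\twoheadrightarrow B/G$. The delicate point is to absorb the remaining transitive-target partitions $A\twoheadrightarrow G/K$ with $H\lneq K\lneq G$: not every such partition is a coarsening of an element of $\alpha^\perp$, so one cannot merely retract $N$ onto $\mathcal P_2^G(A)$. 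Instead one considers the functor $N\to\Pi_n$ (the poset of all partitions of $\mathbf n$) sending $A\twoheadrightarrow B$ to the partition of $\mathbf n=A/G$ recording which copies of $G/H$ land in a common orbit of $B$, and verifies that each of its fibers is contractible — a fiber is a product of posets of transitive-target partitions of some $\amalg^{\ell}G/H$, each of which has a maximum (the indiscrete partition) hence is contractible, while the fiber over the indiscrete partition of $\mathbf n$ is such a poset with $\alpha^\perp$ deleted, which still has a maximum. Checking that this makes the functor close enough to a fibration for a Theorem A/B argument to force $\abs{N}$ contractible is where the real work lies.
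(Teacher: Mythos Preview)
Your argument is essentially the paper's, unpacked: both hinge on the Bj\"orner--Walker/Arone--Brantner complementation decomposition of a bounded poset around the complements of a chosen element $\alpha$, and your identifications of $P_{<\beta}\cong\mathcal P(\mathbf n)$, $P_{>\beta}\cong\mathcal P^G(G/H)$, and the count $|\alpha^\perp|=|W_G(H)|^{n-1}$ all match the paper exactly. The difference is only that the paper invokes \cite[3.5]{AroneBrantner}/\cite[4.2]{BjornerWalker} as a black box to get the wedge splitting, whereas you reprove it via the star--link decomposition and the claim that $|N|=|P\setminus\alpha^\perp|$ is contractible.

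The one genuine gap is the contractibility of $|N|$, which you flag but do not close. Your proposed route---fibering over $\Pi_n$ and analysing fibers---is more delicate than necessary, and as stated it is not clear the comma categories (not just the literal fibers) behave well enough for Theorem~A. The clean fix you are missing: $\widehat{\mathcal P^G(A)}=\mathcal P^G(A)\cup\{\hat 0,\hat 1\}$ is a \emph{lattice}, since meets and joins of $G$-stable equivalence relations on $A$ are again $G$-stable. Hence $\alpha^\perp$ is precisely the set of lattice complements of $\alpha$, and the contractibility of $P\setminus\alpha^\perp$ is the standard complementation homotopy (e.g.\ via the monotone closure maps $x\mapsto x\vee\alpha$ and $x\mapsto x\wedge\alpha$, which contract the subposets $\{x:x\vee\alpha\neq\hat 1\}$ and $\{x:x\wedge\alpha\neq\hat 0\}$ onto cones with apex $\alpha$). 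With that observation your argument is complete and equivalent to the paper's; without it, the sketch does not stand on its own.
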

    
\begin{proof}
Let $\alpha\in \mathcal P^G(A)$ be as in \cref{perp lemma}.  By \cite[3.5]{AroneBrantner} (see also \cite[4.2]{BjornerWalker}), there is a homotopy equivalence
\[ |\mathcal P^G(A)|\simeq \bigvee\limits_{\beta\in \alpha^{\perp}} |(\mathcal \beta \downarrow \mathcal P^G(A))_{\times})|^{\diamond}\wedge |(\mathcal P^G(A)\downarrow \beta)_{\times}|^{\diamond}, \]
where $\times$ denotes that we are considering the subcategory of the slice category that does not contain the initial or final objects.

By \cref{perp lemma}, we have that an arbitrary $\beta\in \alpha^{\perp}$ is of the form $A\twoheadrightarrow G/H$, and it is straightforward to check that $\mathcal (\beta \downarrow \mathcal P^G(A))_{\times}\simeq \mathcal P^G(G/H)$ and $(\mathcal P^G(A)\downarrow \beta)_{\times}\simeq \mathcal P(n)$.  It remains to check how many isomorphism classes of objects are in $\alpha^{\perp}$.  Note that every element $\beta\in \alpha^{\perp}$ is represented by an object in $\Hom_G(A,G/H)$.  Since $A$ is a disjoint union of $n$ copies of $G/H$, we have $\Hom_G(A,G/H)\cong \Aut_G(G/H)^n\cong W_G(H)^n$. Finally, we need to take the quotient by the subgroup of automorphisms of the target, which is the diagonal copy of $W_G(H)$.
\end{proof}

\begin{remark}
    The splitting of  \cref{EqTreesHType} is similar to a result of \cite{AroneBrantner}.  Let $A$ be an $H$-isovariant $G$-set and let $n=|A|$ and $|G/H|=d$. 
     The action of $G$ on $G/H$ induces an inclusion $G\subseteq \Sigma_{d}$.  The $G$-action on $A$ induces an inclusion $G\subseteq \Sigma_n$ which, up to relabeling, factors as 
    \[ G\subseteq \Sigma_d\subseteq \Sigma_d^{\frac{n}{d}}\subseteq \Sigma_n, \] 
    where the second inclusion is the diagonal embedding.  Using this embedding, \cite[Theorem 6.2]{AroneBrantner} identifies 
    \[ |\mathcal P(A)|^G \cong\uparrow_{W_{\Sigma_d}(G)\times \Sigma_{\frac{n}{d}}}^{W_{\Sigma_n}(G)} |\mathcal P^G(G/H)|^{\diamond}\wedge |\mathcal P(\mathbf n)|^{\diamond}, \]
    where $\uparrow$ is the induction functor on based spaces.  This result compares directly with \cref{EqTreesHType}, as induction on based spaces is given by wedge sum.  Counting the number of summands in both presentations, we obtain a combinatorial identity 
   \[ \frac{|W_{\Sigma_n}(G)|}{|W_{\Sigma_d}(G)|\cdot (\frac{n}{d})!} = |W_G(H)|^{n-1} \]
   that must hold whenever $G$ acts $H$-isovariantly on a set with $n$ elements.
\end{remark}

In many cases, \cref{EqTreesHType} suffices to compute the homotopy type of $|\mathcal P^G(A)|$.

\begin{corollary}\label[corollary]{HTypeOfFreeGTrees}
    If $G$ is a solvable group, $H\leq G$ is normal, and $A$ is $H$-isovariant then $|\mathcal P^G(A)|$ is homotopy equivalent to a wedge of equidimensional spheres.
\end{corollary}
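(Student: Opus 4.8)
The plan is to bootstrap from the splitting in \cref{EqTreesHType}, reducing the general $H$-isovariant case to the two building blocks $|\mathcal P^G(G/H)|$ and $|\mathcal P(\mathbf n)|$, and to show that each of these, and hence every space built from them by suspension, smash, and wedge, is a wedge of equidimensional spheres. Writing $A\cong\coprod_{i=1}^n G/H$, I would first dispose of the degenerate case $H=G$: then $A$ is a trivial $G$-set, $\mathcal P^G(A)$ is isomorphic to the ordinary partition poset $\mathcal P(\mathbf n)$, and $|\mathcal P(\mathbf n)|$ is a wedge of $(n-1)!$ copies of $S^{n-3}$ by \cite{robinson:2004}. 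So one may assume $H<G$ is proper, which is exactly the setting in which \cref{EqTreesHType} applies.

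Next I would analyze the base block $|\mathcal P^G(G/H)|$. Using the proposition above identifying $\mathcal P^G(G/H)$ with the poset $S(G,H)$ of subgroups strictly between $H$ and $G$, together with the hypothesis $H\trianglelefteq G$, the correspondence theorem gives an order isomorphism $S(G,H)\cong S(G/H,e)$, the poset of proper nontrivial subgroups of the quotient $G/H$. Since $G/H$ is solvable (being a quotient of a solvable group), the theorem of Kratzer and Th\'evenaz \cite{KratzerThevenaz} cited in \cref{subgroupLatticeHType} gives that $|S(G/H,e)|\simeq|\mathcal P^G(G/H)|$ is a wedge of equidimensional spheres --- with the usual conventions that this is empty when $G/H$ has prime order and a wedge of copies of $S^0$ when the poset has dimension $0$.

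Finally I would assemble the pieces. For $n>1$, \cref{EqTreesHType} gives $|\mathcal P^G(A)|\simeq\bigvee_{|W_G(H)|^{n-1}}|\mathcal P^G(G/H)|^{\diamond}\wedge|\mathcal P(\mathbf n)|^{\diamond}$. Writing $|\mathcal P^G(G/H)|\simeq\bigvee S^{a}$ (from the previous paragraph) and $|\mathcal P(\mathbf n)|\simeq\bigvee S^{n-3}$, I would invoke the elementary facts that the unreduced suspension of a wedge of $d$-spheres is homotopy equivalent to a wedge of $(d+1)$-spheres, that the smash product distributes over wedges with $S^{p}\wedge S^{q}\cong S^{p+q}$, and that a wedge of copies of a wedge of $d$-spheres is again a wedge of $d$-spheres. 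These combine to show the right-hand side is a wedge of copies of $S^{a+n-1}$, all of the same dimension, which establishes the corollary. (All of these operations are formal once one knows the spaces involved are CW complexes, which they are as realizations of nerves and of simplicial complexes.)

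I expect the main obstacle to be organizational rather than conceptual: correctly threading the normality hypothesis through the identification $S(G,H)\cong S(G/H,e)$ so that Kratzer--Th\'evenaz genuinely applies to the quotient, and handling the low-dimensional edge cases (e.g.\ $G/H$ of prime order, or $n\le 2$ so that $|\mathcal P(\mathbf n)|$ is empty or a finite discrete set) so that ``wedge of equidimensional spheres'' holds on the nose, including the conventions for the empty space and for wedges of $S^0$'s.
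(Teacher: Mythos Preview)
Your proposal is correct and follows essentially the same route as the paper: identify $\mathcal P^G(G/H)\simeq S(G,H)\cong S(G/H,e)$ via normality of $H$, apply Kratzer--Th\'evenaz to the solvable quotient, invoke the classical result for $|\mathcal P(\mathbf n)|$, and then feed both into \cref{EqTreesHType} using that suspension, smash, and wedge preserve wedges of equidimensional spheres. If anything, you are slightly more careful than the paper in explicitly separating out the degenerate cases $H=G$ and $n=1$.
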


\begin{proof}
    Since $H$ is normal, $Q=G/H$ is a solvable group, and we can use the equivalences of categories
    \[ \mathcal P^G(G/H)\cong S(G,H)\cong S(Q,e)\cong \mathcal P^Q(Q/e) \]
    together with \cref{subgroupLatticeHType} to deduce that $|\mathcal P^G(G/H)|$ has the homotopy type of a wedge of equidimensional spheres.   It is well-known that the homotopy type of $|\mathcal{P}(\mathbf{n})|$ is also a wedge of equidimensional spheres; see, for example \cite{robinson:2004}.   The claim now follows from \cref{EqTreesHType} and the facts that the smash product distributes over wedges and the smash product of two spheres is a sphere.
\end{proof}

\section{Connections to Cohomology and Lie algebras} \label[section]{sec:homology Lie}

Non-equivariantly, the cohomology of the space of trees is related to certain integral representations of the symmetric group $\Sigma_n$ coming from Lie algebra theory.  In this section we recall this result, following Robinson \cite{robinson:2004}, and explain how our work relates to it.  All cohomology groups in this section are integral.

Before proceeding, some remarks are in order regarding the way our work fits into the general context of equivariant cohomology theories.  For a $G$-space $X$, there are three standard ways that the action of $G$ induces additional structure on homology. The most straightforward, and the one we focus on, is that for all $g\in G$, the maps $g\colon X\to X$ induce a $G$-action on $H^*(X)$ giving it the structure of a graded $G$-module.  Two other common approaches are Borel cohomology and Bredon cohomology \cite{may:96}, but we do not consider these notions here. Computations of the Bredon homology of partition complexes for $G=\Sigma_n$ are done in work of Arone, Dwyer, and Lesh \cite{AroneDwyerLesh1}, \cite{AroneDwyerLesh2}. 

We now recall the work of Robinson on computations of the $\Sigma_n$-module structure on the cohomology of the ordinary partition complex $\mathcal P(\mathbf{n})$.  For a fixed $n$, write $\mathcal L_n$ for the free Lie algebra on a set of $n$ generators $\{x_1,\dots,x_n\}$. 
The \emph{n-linear part} of $\mathcal L_n$ is the subgroup $\Lie_n \leq \mathcal L_n$ generated by Lie monomials containing every generator $x_i$ 
exactly once.  The standard left action of the symmetric group $\Sigma_n$ on the set $\{x_1,\dots,x_n\}$ 
extends to an action on $\Lie_n$ that we call the \emph{integral Lie representation} of $\Sigma_n$.  The collection of $\mathbb{Z}[\Sigma_n]$-modules $\{\Lie_n\}$ forms a symmetric operad in abelian groups whose algebras are Lie algebras.

Let $\varepsilon^{\Sigma_n}$ denote the integral sign representation of $\Sigma_n$. The following theorem is proved in \cite[Theorem 4.1]{robinson:2004}.

\begin{theorem}\label[theorem]{Robinson Tree Homology}
    There is an isomorphism of $\Sigma_n$-modules \[H^{n-3}(\mathbb T(\mathbf{n}))\cong \varepsilon^{\Sigma_n}\otimes \Lie_n.\] 
\end{theorem}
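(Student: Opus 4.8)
The plan is to reduce to the classical computation of the top cohomology of the partition lattice as a $\Sigma_n$-representation, and then transport the module structure along Robinson's homeomorphism. First, \cref{trees homeo}, applied with $G=\Sigma_n$ and $A=\mathbf n$, provides a $\Sigma_n$-equivariant homeomorphism $\mathbb T(\mathbf n)\cong\abs{\mathcal P(\mathbf n)}$, so it suffices to identify the reduced cohomology $H^{n-3}(\abs{\mathcal P(\mathbf n)})$ as a $\ZZ[\Sigma_n]$-module. Now $\abs{\mathcal P(\mathbf n)}$ is the order complex of the proper part of the partition lattice $\Pi_n$, which is shellable; consequently it is homotopy equivalent to a wedge of $(n-1)!$ copies of $S^{n-3}$ (see \cite{wachs}, \cite{robinson:2004}), so that its reduced cohomology is free abelian of rank $(n-1)!$ and concentrated in degree $n-3$. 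All that remains is to pin down the $\Sigma_n$-action.

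For this I would use the standard facet-cocycle model for the top cohomology of a shellable complex: $\tilde H^{n-3}(\abs{\mathcal P(\mathbf n)})$ is the quotient of the free abelian group on the maximal chains of the order complex (the top cochains) by the coboundaries, with $\Sigma_n$ permuting maximal chains and contributing the sign of the induced orientation on the top simplices. Under the dictionary of \cref{layeredtrees ex}, a maximal chain in $\Pi_n$ corresponds to a binary layered reduced $\mathbf n$-tree, that is, to a complete bracketing of $x_1,\dots,x_n$. Sending such a bracketing to the corresponding iterated Lie monomial in $\Lie_n$, while carrying along the orientation sign, defines a $\Sigma_n$-equivariant map to $\varepsilon^{\Sigma_n}\otimes\Lie_n$. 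One then checks, as in \cite{robinson:2004} (see also \cite{RW96}, \cite{barcelo}, \cite{hanlonwachs}), that this map is an isomorphism: it is surjective because the bracketings span $\Lie_n$, and injective by a rank comparison, using that the coboundary relations among facet cocycles correspond under the dictionary to the antisymmetry and Jacobi relations defining $\Lie_n$, so that both sides are presented identically over $\ZZ$. Equivalently, one may argue operadically: $\abs{\mathcal P(\mathbf n)}$ is a model for the arity-$n$ part of the bar construction of the commutative operad, whose homology is, up to the degree shift and the sign twist recorded by $\varepsilon^{\Sigma_n}$, the Koszul-dual Lie operad; this is the integral refinement, due to Robinson and Whitehouse, of the rational computations of Joyal, Klyachko, and Stanley.

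Finally, to move between cohomology and homology one uses that the (co)homology here is free and concentrated in a single degree, so the universal coefficient theorem supplies a natural — hence $\Sigma_n$-equivariant — isomorphism $H^{n-3}(\abs{\mathcal P(\mathbf n)})\cong\Hom_{\ZZ}\bigl(\tilde H_{n-3}(\abs{\mathcal P(\mathbf n)}),\ZZ\bigr)$; thus if one prefers to begin from the homological form $\tilde H_{n-3}(\abs{\mathcal P(\mathbf n)})\cong\varepsilon^{\Sigma_n}\otimes\Lie_n$ of the classical result, the cohomological statement follows by $\ZZ$-dualizing, since both $\varepsilon^{\Sigma_n}$ (rank one with square trivial) and $\Lie_n$ are self-dual as integral $\Sigma_n$-representations.

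The step I expect to be the main obstacle is this integral identification of the $\Sigma_n$-module structure. Rationally the statement is classical, but making it hold on the nose over $\ZZ$ requires either a careful check that the facet-cocycle presentation of $\tilde H^{n-3}$ matches the ``free magma modulo antisymmetry and Jacobi'' presentation of $\Lie_n$, with all signs tracked through the orientations of the top simplices, or else an appeal to the integral operadic Koszul-duality computation of \cite{RW96}, \cite{robinson:2004}; once the module is identified, the reduction via \cref{trees homeo} and the homology–cohomology bookkeeping are formal.
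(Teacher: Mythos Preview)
The paper does not give its own proof of this theorem; it simply records the statement and attributes it to \cite[Theorem~4.1]{robinson:2004}. So your proposal is being compared against Robinson's original argument rather than anything written in this paper.

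Your sketch is sound in outline and would establish the result, but it takes a slightly different route from Robinson's. Robinson works directly with the simplicial complex structure on $\mathbb T(\mathbf n)$: the top-dimensional simplices are the binary $\mathbf n$-trees, and he exhibits an explicit cochain map sending a binary tree to its associated Lie monomial, checking by hand that the coboundary relations become antisymmetry and Jacobi. You instead transport the problem to $\abs{\mathcal P(\mathbf n)}$ via the homeomorphism and invoke shellability of the partition lattice before carrying out essentially the same identification on maximal chains. Since the top cells and their incidence relations are the same in either model, the two arguments differ only in packaging. Your appeal to \cref{trees homeo} is harmless: that result rests on Robinson's earlier homeomorphism \cite[Theorem~2.7]{robinson:2004}, not on the cohomological computation, so there is no circularity. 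One small caution: the integral self-duality of $\Lie_n$ as a $\Sigma_n$-lattice that you invoke at the end is not as automatic as the rational statement, so it is cleaner to run the argument in cohomology from the start, as you in fact do in your main line.
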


We would like to prove an analogous result when $\mathbf{n}$ is replaced by a $G$-set $A$ for some finite group $G$.  The first step is to find suitable replacements for the $\Sigma_n$-representations ${\Lie}_n$ and $\varepsilon^{\Sigma_n}$. Given a $G$-set $A$, let $\alpha\colon G\to \Sigma_n$ be the homomorphism that realizes the action of $G$ on $A$.  Implicitly, this homomorphism depends on a choice of total ordering for $A$, but we do not use this additional information.

Let $\mathcal{L}_A$ denote the free Lie algebra on the set $A$.  Since $\mathcal{L}_A$ is generated as a Lie algebra by a set in bijection with $A$, it inherits a natural $G$-action.   We define the $A$-linear part of $\mathcal{L}_A$ to be the $G$-subgroup ${\Lie}_A\leq \mathcal{L}_A$ generated by Lie monomials containing every generator of $\mathcal{L}_A$ exactly once.  This $G$-submodule plays the role of ${\Lie}_n$ in the equivariant setting.  

To replace the sign representation, we define the $A$-sign representation $\varepsilon^G_A$ of $G$.  Let $G$ act on the free abelian group $V$ generated by $A$.  A choice of ordering for $A$ corresponds to a choice of ordered basis for $V$, and thus gives matrix representations for the action of each $g\in G$.  Define $\varepsilon^G_A(g) = \det(g) =\pm 1$ for all $g\in G$, and consider this action as a $1$-dimensional $G$-representation.  Note that while this definition requires a choice of ordering for $A$, the $G$-representation $\varepsilon^G_A$ is independent of this choice, since any two choices of ordering yield actions on $V$ that are conjugate.

It is not hard to show there are isomorphisms of $G$-modules 
\[{\Lie}_A\cong \alpha^*({\Lie}_n) \quad \textrm{and}\quad  \varepsilon^G_A\cong \alpha^*(\varepsilon^{\Sigma_n}), \] 
where $\alpha^*$ is the functor that restricts a $\Sigma_n$-module to a $G$-module along the homomorphism $\alpha\colon G\to \Sigma_n$.  The next proposition uses these isomorphisms to give an equivariant analogue of \cref{Robinson Tree Homology}.

\begin{theorem}\label[theorem]{Homology of T(A)}
    There is an isomorphism of $G$-modules
    \[ H^{n-3}(\mathbb T(A)) \cong \varepsilon_A^G \otimes \Lie_A. \]
\end{theorem}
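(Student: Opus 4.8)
The plan is to deduce the result directly from Robinson's computation (\cref{Robinson Tree Homology}) by restricting every object in sight along the homomorphism $\alpha\colon G\to \Sigma_n$ encoding the $G$-action on $A$, exactly as in the proofs of \cref{restrictionlemmatrees,Homology of T(A)}'s preceding remarks.

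First, by \cref{restrictionlemmatrees} there is a $G$-homeomorphism $\mathbb T(A)\cong_G \alpha^*\mathbb T(\mathbf n)$; that is, $\mathbb T(A)$ is the space $\mathbb T(\mathbf n)$ with its $\Sigma_n$-action pulled back to $G$. Since integral cohomology is contravariantly functorial and the $G$-action on $H^*(X)$ for a $G$-space $X$ is the one induced by the action maps $g\colon X\to X$, passing to cohomology commutes with restriction of the group, so $H^{n-3}(\mathbb T(A))\cong \alpha^*\bigl(H^{n-3}(\mathbb T(\mathbf n))\bigr)$ as $\ZZ[G]$-modules. (One may also record here that $\mathbb T(\mathbf n)$, being homeomorphic to $\abs{\mathcal P(\mathbf n)}$, has the homotopy type of a wedge of $(n-3)$-spheres, so degree $n-3$ is the only nonvanishing reduced cohomology group; this is not needed for the statement but explains why the result is phrased in that degree.)

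Second, apply \cref{Robinson Tree Homology} to obtain an isomorphism of $\Sigma_n$-modules $H^{n-3}(\mathbb T(\mathbf n))\cong \varepsilon^{\Sigma_n}\otimes\Lie_n$, and restrict it along $\alpha$. The restriction functor $\alpha^*$ from $\ZZ[\Sigma_n]$-modules to $\ZZ[G]$-modules is symmetric monoidal for the tensor product over $\ZZ$ with diagonal action, since that action is defined groupwise; hence $\alpha^*\bigl(\varepsilon^{\Sigma_n}\otimes\Lie_n\bigr)\cong \alpha^*(\varepsilon^{\Sigma_n})\otimes \alpha^*(\Lie_n)$. Combining this with the $G$-module isomorphisms $\varepsilon_A^G\cong \alpha^*(\varepsilon^{\Sigma_n})$ and $\Lie_A\cong \alpha^*(\Lie_n)$ recorded just before the statement yields $H^{n-3}(\mathbb T(A))\cong \varepsilon_A^G\otimes \Lie_A$, as claimed.

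The argument is essentially formal once Robinson's theorem is in hand, so there is no genuine obstacle: the only work is the bookkeeping that cohomology and tensor products of representations are compatible with restriction along $\alpha$, together with the (straightforward) verification of the two auxiliary isomorphisms $\Lie_A\cong\alpha^*\Lie_n$ and $\varepsilon_A^G\cong\alpha^*\varepsilon^{\Sigma_n}$, which hold because both $\Lie_A$ and $\varepsilon_A^G$ are built from the free abelian group on $A$ with its $G$-action, which is precisely $\alpha^*$ of the free abelian group on $\mathbf n$ with its $\Sigma_n$-action. If one prefers a statement manifestly independent of the auxiliary choice of ordering of $A$, one can phrase the whole comparison functorially in the finite $G$-set $A$ using the intrinsic descriptions of $\Lie_A$ and $\varepsilon_A^G$, but the restriction argument above already produces the stated isomorphism.
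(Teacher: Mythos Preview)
Your argument is correct and follows essentially the same route as the paper: restrict along $\alpha\colon G\to\Sigma_n$ using \cref{restrictionlemmatrees}, invoke Robinson's \cref{Robinson Tree Homology}, and use that both cohomology and tensor products of modules commute with restriction together with the identifications $\Lie_A\cong\alpha^*\Lie_n$ and $\varepsilon_A^G\cong\alpha^*\varepsilon^{\Sigma_n}$. Your write-up is slightly more expansive in justifying the compatibility of restriction with cohomology and with $\otimes$, but the structure of the proof is identical to the paper's.
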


\begin{proof}
    Unwinding the definition, we see there are isomorphisms of $G$-modules
    \[\alpha^*(\varepsilon_A^{\Sigma_n}\otimes \Lie_n)\cong \alpha^*(\varepsilon^{\Sigma_n})\otimes\alpha^*({\Lie}_n) \cong \varepsilon_A^G \otimes \Lie_A. \]
    Since $\varepsilon^{\Sigma_n} \otimes \Lie_n\cong H^{n-3}(\mathbb T(\mathbf{n}))$, the result now follows from \cref{restrictionlemmatrees} and the fact that for any $\Sigma_n$-space $Y$ there is an isomorphism of $G$-modules $H^*(\alpha^*Y)\cong \alpha^* H^*(Y)$. This last isomorphism follows from an isomorphism at the level of singular cochains.
\end{proof}

We conclude this section with some comments on the cohomology of the space of equivariant trees $\mathbb{T}^G(A)$. We rely on the homeomorphism $\mathbb{T}^G(A)\cong |\mathcal{P}^G(A)|$ from \cref{thm:PGA and TGA}. Using \cref{notIsovariantContractible}, the homology of this space is often trivial.

\begin{proposition}\label[proposition]{G tree homology}
    Suppose $A$ is a $G$-set that is not $H$-isovariant for any $H\leq G$.  Then the reduced homology $\widetilde{H}^*(\mathbb{T}^G(A))=0$. 
\end{proposition}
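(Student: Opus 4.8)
The plan is to reduce the statement to two results already established in the paper. First, by \cref{thm:PGA and TGA}(c) there is a homeomorphism $\mathbb{T}^G(A) \cong |\mathcal{P}^G(A)|$, so it suffices to compute the reduced cohomology of the classifying space $|\mathcal{P}^G(A)|$. Second, the hypothesis that $A$ is not $H$-isovariant for any $H \leq G$ is precisely the hypothesis of \cref{notIsovariantContractible}, which tells us that the category $\mathcal{P}^G(A)$ is contractible. Since the classifying space functor sends contractible categories to contractible spaces (a contractible category has a nerve that is a weakly contractible simplicial set), $|\mathcal{P}^G(A)|$ is contractible, and therefore so is $\mathbb{T}^G(A)$.

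From contractibility the conclusion is immediate: a contractible space has trivial reduced (co)homology in all degrees, so $\widetilde{H}^*(\mathbb{T}^G(A)) = 0$. One could also phrase this without invoking contractibility of the space directly: \cref{notIsovariantContractible} is proved by exhibiting the inclusion $\mathcal P^G_2(A) \hookrightarrow \mathcal P^G(A)$ as a homotopy equivalence together with an initial (hence homotopy-contracting) object in a cofinal subcategory, and each of these steps already passes to classifying spaces, so no new input is needed.

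I do not anticipate any real obstacle here; the content of the statement lives entirely in the earlier results \cref{thm:PGA and TGA} and \cref{notIsovariantContractible}, and the only thing to check is the bookkeeping that ``contractible category $\Rightarrow$ contractible classifying space $\Rightarrow$ vanishing reduced cohomology,'' which is standard. The single point worth stating carefully in the write-up is that we are using the identification of $\mathbb{T}^G(A)$ with $|\mathcal P^G(A)|$ rather than with $|\mathcal T^G(A)|$ or $|\Delta \mathcal P^G(A)|$, though of course all three are homeomorphic by \cref{thm:PGA and TGA} and any of them would do.
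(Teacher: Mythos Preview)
Your proposal is correct and matches the paper's own argument exactly: the paper states this proposition immediately after noting the homeomorphism $\mathbb{T}^G(A)\cong |\mathcal{P}^G(A)|$ from \cref{thm:PGA and TGA} and invoking \cref{notIsovariantContractible}, with no further proof given. There is nothing to add.
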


When $A$ is $H$-isovariant, its homology is non-trivial and, by  \cref{EqTreesHType}, it is determined by the homotopy types of $|\mathcal{P}^G(G/H)|$ and $|\mathcal{P(|A|)}|$. As noted in \cref{subgroupLatticeHType}, the homotopy type of $|\mathcal{P}^G(G/H)|$ is not known in general, and so we are unable to compute the homology of these spaces completely.  In nice cases, we can use \cref{HTypeOfFreeGTrees} to compute the cohomology when the $G$-set $A$ is free.

\begin{corollary}
    If $G$ is a solvable group and $A$ is $H$-isovariant for some $H\trianglelefteq G$, then the reduced cohomology of $\mathbb{T}^G(A)$ is a finitely generated free abelian group concentrated in a single dimension.
\end{corollary}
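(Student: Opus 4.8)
The plan is to reduce the statement to the homotopy-type computations already in hand. By \cref{thm:PGA and TGA}(c) there is a homeomorphism $\mathbb{T}^G(A)\cong|\mathcal{P}^G(A)|$, so it suffices to understand $\widetilde{H}^*(|\mathcal{P}^G(A)|)$. Under the hypotheses here---$G$ solvable, $H\trianglelefteq G$, and $A$ an $H$-isovariant $G$-set---\cref{HTypeOfFreeGTrees} says that $|\mathcal{P}^G(A)|$ is homotopy equivalent to a wedge of equidimensional spheres, say $\bigvee_{i\in I}S^d$ for a fixed $d\geq 0$.

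The only point requiring comment is the finiteness of this wedge. This is immediate from \cref{EqTreesHType}: the wedge is indexed by $|W_G(H)|^{n-1}$ copies of a single smash product $|\mathcal{P}^G(G/H)|^{\diamond}\wedge|\mathcal{P}(\mathbf n)|^{\diamond}$ of finite complexes, and hence has only finitely many summands. Equivalently, $\mathbb{T}^G(A)$ is by definition a finite simplicial complex, so all of its cohomology groups are finitely generated, and a homotopy equivalence carries this finiteness over to $\bigvee_{i\in I}S^d$, forcing $I$ to be finite.

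It then remains only to recall that for a finite wedge of $d$-spheres, additivity of reduced cohomology over wedges together with the cohomology of a single sphere gives $\widetilde{H}^k\bigl(\bigvee_{i\in I}S^d\bigr)\cong\ZZ^{|I|}$ when $k=d$ and $0$ otherwise (an empty wedge being a point, with vanishing reduced cohomology). Hence $\widetilde{H}^*(\mathbb{T}^G(A))$ is a finitely generated free abelian group concentrated in dimension $d$, as claimed. There is essentially no obstacle beyond this bookkeeping: the corollary is a formal consequence of \cref{thm:PGA and TGA}, \cref{HTypeOfFreeGTrees}, and \cref{EqTreesHType}; if one wishes, the last of these even pins down the rank as the product of $|W_G(H)|^{n-1}$ with the ranks of $\widetilde{H}^*(|\mathcal{P}^G(G/H)|)$ and $\widetilde{H}^*(|\mathcal{P}(\mathbf n)|)$.
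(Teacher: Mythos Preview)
Your proof is correct and follows exactly the route the paper intends: combine the homeomorphism $\mathbb{T}^G(A)\cong|\mathcal{P}^G(A)|$ from \cref{thm:PGA and TGA} with \cref{HTypeOfFreeGTrees}, then read off the cohomology of a finite wedge of equidimensional spheres. One small caveat on your closing aside about ranks: since \cref{EqTreesHType} involves unreduced suspensions $(-)^{\diamond}$, the rank factor coming from $|\mathcal{P}^G(G/H)|$ (and likewise $|\mathcal{P}(\mathbf n)|$) is that of $\widetilde{H}^*(|\mathcal{P}^G(G/H)|^{\diamond})$, which differs from the rank of $\widetilde{H}^*(|\mathcal{P}^G(G/H)|)$ precisely when the latter space is empty (the suspension is then $S^0$, contributing rank $1$ rather than $0$).
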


\appendix

\section{Equivariant versions of Theorems A and B} \label{app Q thm A} 

Quillen's Theorems A and B \cite[\S 1]{quillen:73} play central roles in Quillen's work on algebraic $K$-theory, but are also widely applied outside of that context. In this appendix, we prove analogous theorems for $G$-functors between categories with $G$-action.  While a proof of an equivariant Theorem A appeared in \cite[Theorem 3.10]{RafaelVF}, and more general versions of equivariant Theorems A and B are proved in \cite[Theorem 2.25]{DM:16} and \cite[\S 3]{Dotto}, we include the proofs here because they are short, and keep the paper self-contained.

\subsection{Equivariant Theorem A} 

Quillen's Theorem A is a useful tool for determining whether a functor induces a homotopy equivalence on classifying spaces. In particular, Theorem A is used by Heuts and Moerdijk in their comparison of partition complexes and trees \cite{HM:21}. We now prove a suitable equivariant analogue. In the special case of posets, an equivariant version of Theorem A was proved by Th\'evanaz and Webb \cite{Thevanaz}. 

The idea behind Quillen's Theorem A is that we can determine whether a functor $F\colon \mathcal C\to \mathcal D$ is a homotopy equivalence by looking at the classifying spaces of the undercategory $d\!\downarrow\!F$ for all objects $d$ of $\mathcal D$. Recall that the objects of $d\!\downarrow\! F$ are pairs $(c, g\colon d\to Fc)$, where $c$ is an object of $\mathcal C$ and $g$ is a morphism in $\mathcal D$, and that a morphism between $(c,g)$ and $(c',g')$ in $d\!\downarrow\! F$ is a map $f\colon c\to c'$ such that $g'=(Ff)g$. 

The original statement of Quillen's Theorem A is as follows.

\begin{theorem} \label[theorem]{thm:A}
Let $F \colon \mathcal C \rightarrow \mathcal D$ be a functor.  If $d\!\downarrow\!F$ is contractible for every object $d$ of $\mathcal D$, then $F$ induces a homotopy equivalence $\abs{F}\colon \abs{\mathcal C}\to \abs{\mathcal D}$.
\end{theorem}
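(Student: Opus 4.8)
The plan is to follow Quillen's original strategy: build a single bisimplicial set out of the undercategories $d\downarrow F$ whose two ``edge'' realizations are $\abs{\mathcal C}$ and $\abs{\mathcal D}$, arranged so that the hypothesis is used precisely where those undercategories must be collapsed to points. Concretely, I would let $S_{p,q}$ be the set of triples $\bigl((d_0\to\cdots\to d_q)\in N_q\mathcal D,\ (c_0\to\cdots\to c_p)\in N_p\mathcal C,\ g\colon d_q\to Fc_0\bigr)$. Using that a morphism $g\colon d\to Fc_0$ together with a chain out of $c_0$ is the same datum as a chain in $d\downarrow F$, one gets $S_{p,q}\cong\coprod_{(d_0\to\cdots\to d_q)\in N_q\mathcal D} N_p(d_q\downarrow F)$ and, dually, $S_{p,q}\cong\coprod_{(c_0\to\cdots\to c_p)\in N_p\mathcal C} N_q(\mathcal D\downarrow Fc_0)$. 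The $\mathcal D$-faces act on the $d$-chain, with $d^q$ additionally precomposing $g$ along $d_{q-1}\to d_q$; the $\mathcal C$-faces act on the $c$-chain, with $d^0$ postcomposing $g$ along $Fc_0\to Fc_1$. The first routine task is to check that this is a well-defined bisimplicial set.

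Next I would compute the two edge realizations using the realization lemma for bisimplicial sets (a map that is a weak equivalence in every column, respectively every row, realizes to a weak equivalence). For each fixed $q$, realizing in $p$ turns $S_{\bullet,q}$ into $\coprod_{(d_0\to\cdots\to d_q)} \abs{d_q\downarrow F}$; the hypothesis says each $\abs{d_q\downarrow F}$ is contractible, so the projection $S_{\bullet,q}\to N_q\mathcal D$ (remember only the $d$-chain) is a weak equivalence for every $q$, whence a weak equivalence $u\colon\abs S\xrightarrow{\ \simeq\ }\abs{\mathcal D}$. This is the one place the hypothesis enters. For each fixed $p$, realizing in $q$ turns $S_{p,\bullet}$ into $\coprod_{(c_0\to\cdots\to c_p)}\abs{\mathcal D\downarrow Fc_0}$, and $\mathcal D\downarrow Fc_0$ has the terminal object $(Fc_0,\id)$ and so is contractible unconditionally; thus the projection $S_{p,\bullet}\to N_p\mathcal C$ is a weak equivalence for every $p$, giving $v\colon\abs S\xrightarrow{\ \simeq\ }\abs{\mathcal C}$ with no hypothesis needed.

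It then remains to compare these with $\abs F$. On the diagonal, $u$ records $(d_0\to\cdots\to d_n)$ and $\abs F\circ v$ records $(Fc_0\to\cdots\to Fc_n)$, and the bridge morphism $g\colon d_n\to Fc_0$ assembles these into one long chain $d_0\to\cdots\to d_n\xrightarrow{g}Fc_0\to\cdots\to Fc_n$ in $\mathcal D$; sliding the cut point along an interval is the standard way to promote this to a homotopy $\abs F\circ v\simeq u$ on $\abs S$. Combining, $\abs F$ agrees up to homotopy with $u\circ v^{-1}$, so it is a weak homotopy equivalence; since $\abs{\mathcal C}$ and $\abs{\mathcal D}$ are CW complexes, Whitehead's theorem upgrades this to a genuine homotopy equivalence.

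I expect the main obstacle to be the bisimplicial bookkeeping — pinning down the face and degeneracy maps (especially the interaction of $g$ with $d^0$ and $d^q$) and the two transposed forms of the realization lemma — rather than any deep input. An essentially equivalent route, which some readers may prefer, factors $F$ through the comma category $\mathcal E=(\mathcal D\downarrow F)$ with objects $(d,c,g\colon d\to Fc)$: the projection $p\colon\mathcal E\to\mathcal C$ has the section $c\mapsto(Fc,c,\id)$ together with a natural transformation $\id_{\mathcal E}\Rightarrow sp$, hence $\abs p$ is an equivalence by the earlier results on natural transformations, and the tautological maps $g$ form a natural transformation showing $\abs{q}\simeq\abs F\circ\abs p$ for the other projection $q\colon\mathcal E\to\mathcal D$; the content then reduces to showing $\abs q$ is an equivalence, which follows from Thomason's homotopy colimit theorem once one observes that $\mathcal E$ is the Grothendieck construction of $d\mapsto(d\downarrow F)$ and that the hypothesis makes this a homotopy colimit over $\mathcal D$ of contractible pieces. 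Either way the crux is the same: transporting ``all undercategories contractible'' across a homotopy-colimit statement.
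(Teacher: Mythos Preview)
Your proposal is correct and is essentially Quillen's original bisimplicial argument from \cite{quillen:73}. Note, however, that the paper does not give its own proof of this statement: Theorem~A is stated as the classical result of Quillen and simply cited, then used as a black box in the proof of the equivariant version (\cref{eq Q thm A}). So there is nothing to compare against beyond observing that what you wrote is precisely the argument the paper is citing.
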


As noted by Quillen \cite{quillen:73}, the dual statement where we assume $F$ is homotopy initial, rather than homotopy final, also holds by an analogous proof.

We want an equivariant analogue of this theorem. Observe that if $F\colon \mathcal C \to \mathcal D$ is a $G$-functor between categories with a $G$-action, then the fiber $d\!\downarrow\! F$ has an action of the isotropy subgroup $G_d=\{g\in G\mid g\cdot d = d\}$. For any $H\leq G_d$ we can compute the fixed point category $(d\downarrow F)^H$.  Note that we have an equality of categories
\[ (d\downarrow F)^H = d\downarrow F^H, \]
since both have objects $(c,\psi\colon d\to Fc)$ with $hc=c$ and $h\psi=\psi$ for all $h\in H$.

We now want to ask that each fiber $d\!\downarrow\! F$ is \emph{$G_d$-contractible}, meaning that the homotopy equivalence $\abs{d\!\downarrow\! F}\to \ast$ restricts to homotopy equivalences of fixed points $\abs{d\!\downarrow\! F}^H\to \ast$ for all $H\leq G_d$; i.e.,  $F$ must be $G$-homotopy final (see \cref{htpy initial def}). Setting $H=e$, we see the fibers all need to be contractible, as in the non-equivariant version.
We can thus state the following equivariant version of Quillen's Theorem A.

\begin{theorem}[Equivariant Theorem A]\label[theorem]{eq Q thm A}
If $F\colon \mathcal C\to \mathcal D$ is $G$-homotopy initial or $G$-homotopy final, then $\abs{F}\colon \abs{\mathcal C}\to \abs{\mathcal D}$ is a $G$-homotopy equivalence.
\end{theorem}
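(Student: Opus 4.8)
The plan is to reduce the equivariant statement to the ordinary Quillen Theorem~A (\cref{thm:A}), applied separately to each fixed-point category. Recall that the classifying space of a category with $G$-action is the realization of a simplicial $G$-set and hence carries a $G$-CW structure (one equivariant cell $G/K \times D^n$ for each $G$-orbit of nondegenerate $n$-simplices with stabilizer $K$). Therefore, by the equivariant Whitehead theorem \cite{may:96}, it suffices to prove that $\abs{F}^H \colon \abs{\mathcal C}^H \to \abs{\mathcal D}^H$ is an ordinary homotopy equivalence for every $H \leq G$.

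First I would identify this map with the realization of a functor between fixed-point categories. Since $(N\mathcal C)^H = N(\mathcal C^H)$ as simplicial sets, \cref{fixed pts vs realizaton} gives a natural homeomorphism $\abs{\mathcal C}^H \cong \abs{\mathcal C^H}$, and under these identifications $\abs{F}^H$ becomes $\abs{F^H}$ for the restricted functor $F^H \colon \mathcal C^H \to \mathcal D^H$. So the task reduces to verifying that $F^H$ meets the hypothesis of \cref{thm:A}.

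Then, treating the $G$-homotopy final case (the $G$-homotopy initial case is entirely dual, with overcategories replacing undercategories), I would fix an object $d$ of $\mathcal D^H$ and show $d \downarrow F^H$ is contractible. Since $d$ is $H$-fixed we have $H \leq G_d$, and, as recorded just before the statement, $d \downarrow F^H = (d \downarrow F)^H$. Because $F$ is $G$-homotopy final, $d \downarrow F$ is $G_d$-contractible in the sense of \cref{htpy initial def}; instantiating that contractibility at the subgroup $H \leq G_d$ shows $\abs{(d \downarrow F)^H}$ is contractible. Combined with the homeomorphism of the previous paragraph, $\abs{d \downarrow F^H}$ is contractible, i.e.\ $d \downarrow F^H$ is a contractible category. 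Hence \cref{thm:A} applies to $F^H$, giving that $\abs{F^H}$, and therefore $\abs{F}^H$, is a homotopy equivalence; as $H$ was arbitrary, $\abs{F}$ is a $G$-homotopy equivalence.

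I do not expect a genuine obstacle: the real content has been pre-packaged in \cref{fixed pts vs realizaton}, in the elementary identity $(d\downarrow F)^H = d\downarrow F^H$, and in the ordinary Theorem~A. The only points that deserve care are the appeal to the equivariant Whitehead theorem (which needs the classifying spaces to be $G$-CW, a standard consequence of their being realizations of simplicial $G$-sets) and the observation that ``$G_d$-contractible'' is exactly the hypothesis feeding the $H$-fixed-point computation for every $H \leq G_d$, so that no further estimate is required.
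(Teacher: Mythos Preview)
Your argument is correct and essentially identical to the paper's: both reduce to the non-equivariant Theorem~A applied to $F^H$ via the identifications $\abs{\mathcal C}^H \cong \abs{\mathcal C^H}$ and $(d\downarrow F)^H = d\downarrow F^H$. The only difference is that you invoke the $G$-CW structure and the equivariant Whitehead theorem, whereas the paper takes ``$G$-homotopy equivalence'' to \emph{mean} ``homotopy equivalence on all fixed points'' (see its definition of $G$-equivalence), making that step unnecessary.
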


\begin{proof}
We focus on the case where $F$ is $G$-homotopy final, as the dual result follows by replacing the use of (non-equivariant) Theorem A with its dual theorem. 

To conclude $\abs{F}$ is a $G$-homotopy equivalence, we need to show that $\abs{F}^H\colon \abs{\mathcal C}^H\to \abs{\mathcal D}^H$ is a homotopy equivalence for all $H\leq G$. Since taking fixed points commutes with classifying spaces by \cref{fixed pts vs realizaton}, we equivalently show that $\abs{F^H}\colon \abs{\mathcal C^H}\to \abs{\mathcal D^H}$ is a homotopy equivalence by applying (non-equivariant) Theorem A. Note that if $d\in \ob\mathcal D^H$, then we must have $H\leq G_d$, and \[\abs{d\!\downarrow\!F}^H=\abs{(d\!\downarrow\!F)^H}=\abs{d\!\downarrow\! (F^H)}.\] By assumption, $d\!\downarrow\! F^H = (d\!\downarrow\! F)^H$ is contractible, so we apply Theorem A to conclude $\abs{F^H}\colon \abs{\mathcal C^H}\to \abs{\mathcal D^H}$ is a homotopy equivalence, completing the proof.
\end{proof}

Finally, we include the consequence of \cref{eq Q thm A} that we use in this paper.

\begin{corollary}\label[corollary]{cor:last vertex}
For any $G$-category $\mathcal C$, the last vertex functor $ F\colon \Delta \mathcal C \to \mathcal C$ is $G$-homotopy initial and hence induces a $G$-equivalence on classifying spaces.
\end{corollary}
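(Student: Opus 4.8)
The plan is to deduce this directly from the equivariant Theorem~A (\cref{eq Q thm A}) by verifying the $G$-homotopy initiality condition of \cref{htpy initial def} one fixed-point category at a time, and in each case reducing to the known non-equivariant fact that the last vertex functor is homotopy initial (the discussion preceding Theorem~2.4 in \cite{Dug:06}).

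First I would record the bookkeeping that makes this reduction possible. The $G$-action on $\mathcal C$ induces a levelwise $G$-action on $N\mathcal C$, hence an action on the objects (simplices of $N\mathcal C$) and the generating morphisms (faces and degeneracies) of $\Delta\mathcal C$; this is precisely the $G$-action on $\Delta\mathcal C$ referred to in the paragraph before \cref{cor:last vertex for P(n)}, and with respect to it $F$ is visibly a $G$-functor. Since the action on $N\mathcal C$ is levelwise, a simplex is $H$-fixed exactly when it lies in $(N\mathcal C)^H = N(\mathcal C^H)$; thus $(\Delta\mathcal C)^H = \Delta(\mathcal C^H)$ as categories, and under this identification $F^H \colon \Delta(\mathcal C^H) \to \mathcal C^H$ is again the last vertex functor, now for the category $\mathcal C^H$.

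Next I would verify the initiality condition. Fix an object $d$ of $\mathcal C$ and a subgroup $H \leq G_d$, so that $d$ is an object of $\mathcal C^H$. Just as for the identity $(d\downarrow F)^H = d\downarrow F^H$ noted before \cref{eq Q thm A}, the dual statement gives $(F\downarrow d)^H = F^H\downarrow d$, the overcategory of $d$ along the last vertex functor of $\mathcal C^H$. By the non-equivariant result this overcategory is contractible, so $\abs{F\downarrow d}^H = \abs{(F\downarrow d)^H}$ is contractible for every $H \leq G_d$ (using \cref{fixed pts vs realizaton}); that is, $F\downarrow d$ is $G_d$-contractible. Since $d$ was arbitrary, $F$ is $G$-homotopy initial, and \cref{eq Q thm A} then yields that $\abs{F}$ is a $G$-homotopy equivalence.

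I do not expect a genuine obstacle here: the only point requiring care is the identification $(\Delta\mathcal C)^H = \Delta(\mathcal C^H)$ together with the compatibility of the last vertex functor with passage to $H$-fixed points, both of which follow immediately from the $G$-action on $N\mathcal C$ being defined objectwise. Everything else is a formal application of \cref{eq Q thm A} and the cited non-equivariant input.
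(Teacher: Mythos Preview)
Your proposal is correct and follows essentially the same route as the paper: both reduce to the non-equivariant fact from \cite{Dug:06} by identifying $(F\downarrow d)^H$ with the overcategory of the last vertex functor for $\mathcal C^H$, using that fixed points commute with the category-of-simplices construction. The paper phrases this via the identification $F\downarrow d = \Delta(\mathcal C\downarrow d)$ and then notes $\Delta(\mathcal C^H\downarrow d)\simeq \mathcal C^H\downarrow d$ is contractible, but this is the same argument in slightly different packaging.
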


\begin{proof}
    From the definitions, one can check that $F \downarrow d = \Delta(\mathcal C \downarrow d)$ for any object $d$ of $\mathcal C$, and so $(F \downarrow d)^H = F^H \downarrow d = \Delta(\mathcal C^H \downarrow d)$ for all $H \leq G_d$.  As noted in, for example, the discussion in \cite{Dug:06} before Theorem 2.4, the (non-equivariant) last vertex map induces a weak equivalence on nerves. Hence $(F \downarrow d)^H$ and $\mathcal C^H \downarrow d$ have equivalent nerves, and thus $(F \downarrow d)^H$ is contractible, as desired.
\end{proof}

\subsection{Equivariant Theorem B}

We may similarly prove an equivariant version of Quillen's Theorem B, which gives a sufficient condition to model the homotopy fiber of $\abs{F}\colon \abs{\mathcal C}\to \abs{\mathcal{} D}$ as a classifying space. The original statement of Quillen's Theorem B is as follows.

\begin{theorem}
Let $F\colon \mathcal C\to \mathcal D$ be a functor and suppose that for every morphism $d\to d'$ in $\mathcal D$, the induced map $\abs{d'\!\downarrow\!F}\to \abs{d\!\downarrow\!F}$ is a homotopy equivalence. Then the following pullback square is a homotopy pullback:
\[ \begin{tikzcd}
\abs{d\!\downarrow\!F} \ar[r]\ar[d, swap,] & \abs{\mathcal C}\ar[d, "F"] \\
\abs{d\!\downarrow\!\id_{\mathcal D}} \ar[r, swap,] & \abs{\mathcal D}.
\end{tikzcd}
\] 
\end{theorem}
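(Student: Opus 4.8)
The plan is to follow Quillen's original strategy \cite{quillen:73} and realize $\abs{d \downarrow F}$ as the homotopy fibre of $\abs{F}$, via the Grothendieck construction of the comma-category diagram. First I would record the reduction that makes the statement manageable: the category $d \downarrow \id_{\mathcal D}$ has the initial object $(d, \id_d)$, so $\abs{d \downarrow \id_{\mathcal D}}$ is contractible and the lower map of the square is, up to homotopy, the inclusion of the point $\abs{F}(d)$. Hence the square is a homotopy pullback if and only if the canonical comparison $\abs{d \downarrow F} \to \operatorname{hofib}_d \abs{F}$ is a weak equivalence, and this is the form of the statement I would actually establish.

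Next I would introduce the category $\mathcal Z$ whose objects are triples $(d, c, g\colon d \to Fc)$ with $d$ in $\mathcal D$, $c$ in $\mathcal C$, and $g$ a morphism of $\mathcal D$, and where a morphism $(d,c,g) \to (d',c',g')$ is a pair $(\alpha\colon d \to d',\, \gamma\colon c \to c')$ with $g'\alpha = F\gamma \cdot g$. This category carries two functors: the projection $\pi\colon \mathcal Z \to \mathcal D$, $(d,c,g)\mapsto d$, whose strict fibre over $d$ is exactly $d \downarrow F$ and which is a (cartesian) fibration whose base-change functor along $u\colon d\to d'$ is precomposition $u^*\colon d'\downarrow F\to d\downarrow F$; and $q\colon \mathcal Z\to\mathcal C$, $(d,c,g)\mapsto c$. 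The assignment $c\mapsto (Fc,c,\id_{Fc})$ defines a section $s$ with $qs=\id_{\mathcal C}$ and $\pi s = F$, and the morphisms $(g,\id_c)\colon (d,c,g)\to (Fc,c,\id)$ assemble into a natural transformation $\id_{\mathcal Z}\Rightarrow sq$. Thus $\abs q$ is a homotopy equivalence with inverse $\abs s$, and since $\abs F = \abs\pi \circ \abs s$ with $\abs s$ an equivalence, the homotopy fibre of $\abs F$ over $d$ agrees with that of $\abs\pi$. The problem is thereby transferred to the single fibration $\pi$, whose strict fibres are precisely the comma categories in the hypothesis and whose base-change maps are precisely the maps assumed to be equivalences.

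The core step is the \emph{fibration lemma}: if $\pi\colon\mathcal E\to\mathcal D$ is a functor for which every morphism $d\to d'$ induces a homotopy equivalence between the relevant fibres (here the base-change $\abs{d'\downarrow F}\to\abs{d\downarrow F}$), then each strict fibre models the homotopy fibre of $\abs\pi$. This is where the hypothesis enters, and its proof is bisimplicial. I would write $N\mathcal Z$ as the diagonal of the bisimplicial set
\[ (p,q) \;\longmapsto\; \coprod_{d_0 \to \cdots \to d_p} N_q\!\left(d_p \downarrow F\right), \]
whose structure maps in the $\mathcal D$-direction use the contravariant functoriality $d\mapsto d\downarrow F$ (precomposition), and which projects to $N\mathcal D$ regarded as constant in the second variable. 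The hypothesis says exactly that this diagram is homotopy-constant along $\mathcal D$; I would then invoke the Bousfield--Friedlander theorem (equivalently Quillen's bisimplicial lemma) to conclude that the squares obtained by taking the fibre over each vertex are homotopy cartesian after passing to diagonals. Combined with the identification of the strict fibre as $d\downarrow F$ and the contractibility of $\abs{d\downarrow\id_{\mathcal D}}$, this yields that the original square is a homotopy pullback.

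I expect the main obstacle to be the verification of the hypotheses of the bisimplicial fibration lemma: one must check the $\pi_*$-Kan condition (and the relevant properness) ensuring that rowwise homotopy-cartesianness passes to the diagonal, and it is precisely here that the assumed equivalences $\abs{d'\downarrow F}\to\abs{d\downarrow F}$ do their work. A secondary bookkeeping point is variance: since $d\mapsto d\downarrow F$ is contravariant, the attaching vertex in the display above is the terminal one $d_p$, and one must keep track of the identification $N\mathcal D\cong N\mathcal D^{\op}$ when matching the base of the fibration with $\mathcal D$, but this is harmless at the level of classifying spaces.
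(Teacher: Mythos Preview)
The paper does not actually prove this statement: it records the classical (non-equivariant) Quillen Theorem~B as a known result, attributes it to \cite{quillen:73}, and then uses it as a black box in the proof of the equivariant Theorem~B. So there is no ``paper's own proof'' to compare against; the only comment following the statement is the observation that $\abs{d\downarrow\id_{\mathcal D}}$ is contractible because $(d,\id_d)$ is initial, which is exactly your first reduction.

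Your proposal is a correct outline of Quillen's original argument. The auxiliary category $\mathcal Z$ you introduce is Quillen's $S(F)$, the contraction $\abs{\mathcal Z}\simeq\abs{\mathcal C}$ via the section $s$ and the natural transformation $\id_{\mathcal Z}\Rightarrow sq$ is precisely his Step~1, and the transfer of the problem to the prefibred functor $\pi\colon\mathcal Z\to\mathcal D$ followed by the bisimplicial fibration lemma (what Quillen calls Lemma in \S1 of \cite{quillen:73}, nowadays often packaged via Bousfield--Friedlander) is his Step~2. The variance bookkeeping you flag---that $d\mapsto d\downarrow F$ is contravariant, so the attaching vertex is $d_p$---is handled by Quillen by working with $\mathcal D^{\op}$, and as you note this is harmless on classifying spaces. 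In short: you have supplied the proof the paper chose to cite rather than reproduce, and your sketch is faithful to the original source.
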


Since $\id_d$ is an initial object, $\abs{d\!\downarrow\!\id_{\mathcal D}}$ is contractible, so the inclusion $\abs{d\!\downarrow\!F}\to {\rm hfib}(\abs{F})$ is a homotopy equivalence. There is also a dual version of Theorem B. 

To prove an equivariant version of Theorem B, we need the following result.

\begin{lemma} \label[lemma]{lem:hfib of G-functor}
    Suppose $F\colon \mathcal C\to \mathcal D$ is a $G$-functor and let $d\in \ob(\mathcal D)$. Then the homotopy fiber ${\rm hfib}_d(\abs{F})$ is a $G_d$-space, and  for every $H\leq G_d$ we have 
    \[ {\rm hfib}_d(\abs{F})^H\simeq {\rm hfib}_d(\abs{F^H}).
    \] 
\end{lemma}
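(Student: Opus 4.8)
The plan is to pin down the homotopy fiber via the standard path-space model and then use that $H$-fixed points commute with pullbacks and with mapping spaces out of a trivially-acted source, together with \cref{fixed pts vs realizaton}. Write $I=[0,1]$ with trivial $G$-action and let $P_d\abs{\mathcal D}$ be the space of paths $\gamma\colon I\to\abs{\mathcal D}$ with $\gamma(1)=d$. Since the vertex $d$ is fixed by $G_d$, the conjugation action of $G_d$ on $\Map(I,\abs{\mathcal D})$ restricts to an action on $P_d\abs{\mathcal D}$, and evaluation at $0$ gives a $G_d$-map $\mathrm{ev}_0\colon P_d\abs{\mathcal D}\to\abs{\mathcal D}$. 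Restricting the $G$-action to $G_d$, we take as our model
\[ {\rm hfib}_d(\abs F):=\abs{\mathcal C}\times_{\abs{\mathcal D}}P_d\abs{\mathcal D}, \]
the pullback of $\abs F$ along $\mathrm{ev}_0$; being a pullback of $G_d$-spaces (all formed in compactly generated weak Hausdorff spaces), it is naturally a $G_d$-space.

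Now fix $H\leq G_d$. A point $(c,\gamma)$ of this pullback is $H$-fixed precisely when $c$ and $\gamma$ are individually $H$-fixed, so $(-)^H$ commutes with the pullback. Moreover, because $I$ has trivial action, $\gamma$ is $H$-fixed iff $\gamma(t)\in\abs{\mathcal D}^H$ for every $t$; since $d\in\abs{\mathcal D}^H$ (as $H\leq G_d$), this identifies $(P_d\abs{\mathcal D})^H=P_d(\abs{\mathcal D}^H)$. Finally, \cref{fixed pts vs realizaton} gives homeomorphisms $\abs{\mathcal C}^H\cong\abs{\mathcal C^H}$ and $\abs{\mathcal D}^H\cong\abs{\mathcal D^H}$ natural in the simplicial $G$-set, so that (using $(N\mathcal C)^H=N(\mathcal C^H)$) they carry $\abs F^H$ to $\abs{F^H}\colon\abs{\mathcal C^H}\to\abs{\mathcal D^H}$. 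Assembling,
\[ {\rm hfib}_d(\abs F)^H\;\cong\;\abs{\mathcal C^H}\times_{\abs{\mathcal D^H}}P_d\bigl(\abs{\mathcal D^H}\bigr)\;=\;{\rm hfib}_d(\abs{F^H}), \]
which is in particular a homotopy equivalence, proving the claim.

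The only real work is point-set bookkeeping: one must perform all constructions within compactly generated weak Hausdorff spaces so that the fixed-point subspace of $\Map(I,\abs{\mathcal D})$ with its trivially-acted source is genuinely $\Map(I,\abs{\mathcal D}^H)$, so that $(-)^H$ commutes with the relevant limits, and so that the homeomorphisms of \cref{fixed pts vs realizaton} are natural enough to intertwine $\abs F^H$ and $\abs{F^H}$. Since $G$ is finite, all of this is routine, and I expect no homotopical difficulty beyond it.
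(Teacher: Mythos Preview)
Your proof is correct and follows essentially the same approach as the paper: both model the homotopy fiber via the standard path-space pullback, endow it with the $G_d$-action coming from the actions on $\abs{\mathcal C}$ and on paths in $\abs{\mathcal D}$, and then identify the $H$-fixed points by noting that fixed points commute with the pullback, with the path space (since $I$ has trivial action), and with geometric realization via \cref{fixed pts vs realizaton}. Your write-up is a bit more explicit about the point-set bookkeeping, but the argument is the same.
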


\begin{proof}
We can model the homotopy fiber ${\rm hfib}_d(\abs{F})$ as
\[ \{d\}\times^h_{\abs{\mathcal D}}\abs{\mathcal C}= \{(d, \gamma, c)\mid c\in \abs{\mathcal C}, \gamma\in \abs{D}^I, \gamma(0)=d, \gamma(1)=\abs{F}(c)\}. \] 
This space has a $G_d$-action $g\cdot (d,\gamma, c) = (d, g\cdot \gamma, g\cdot c)$ where $(g\cdot\gamma)(t) = g\cdot \gamma(t)$ for all $t\in I$. A point $(d,\gamma,c)\in {\rm hfib}_d(\abs{F})$ is thus $H$-fixed if and only if $c\in \abs{\mathcal C}^H=\abs{\mathcal C^H}$ and $\gamma$ is a path in $\abs{\mathcal D}^H=\abs{\mathcal D^H}$, which is to say $(d,\gamma,c)\in {\rm hfib}_d(\abs{F^H})$. Thus ${\rm hfib}_d(\abs{F})^H$ models ${\rm hfib}_d(\abs{F^H})$ for $H\leq G_d$.
\end{proof}

\begin{theorem}[Equivariant Theorem B]\label[theorem]{thm:B}
Suppose $F\colon \mathcal C\to \mathcal D$ is a $G$-functor and every morphism $d\to d'$ in $\mathcal D$ induces a $G_d\cap G_{d'}$-equivalence $\abs{d'\!\downarrow\! F}\to \abs{d\!\downarrow\!F}$. Then for each object $d$ of $\mathcal D$, the inclusion $\abs{d\!\downarrow\!F}\to {\rm hfib}_d(\abs{F})$ is a $G_d$-equivalence.
\end{theorem}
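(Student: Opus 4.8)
The plan is to deduce \cref{thm:B} from the non-equivariant Theorem B stated above, applied to each fixed-point functor $F^H\colon \mathcal C^H\to\mathcal D^H$, exactly as \cref{eq Q thm A} was deduced from non-equivariant Theorem A. Fix an object $d$ of $\mathcal D$ and let $\iota_d\colon\abs{d\!\downarrow\!F}\to {\rm hfib}_d(\abs F)$ be the canonical inclusion; this is a $G_d$-map (the structure morphisms $d\to Fc$ assemble into a $G_d$-equivariant ``natural transformation''). By the definition of $G_d$-equivalence, it suffices to show that $\iota_d^H$ is an ordinary homotopy equivalence for every $H\leq G_d$.

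First I would identify the source and target of $\iota_d^H$. On the source, the equality $(d\!\downarrow\!F)^H=d\!\downarrow\!F^H$ noted before \cref{eq Q thm A}, together with $\abs{\mathcal C}^H=\abs{\mathcal C^H}$ (from \cref{fixed pts vs realizaton} and $N(\mathcal C^H)=(N\mathcal C)^H$), gives $\abs{d\!\downarrow\!F}^H=\abs{d\!\downarrow\!F^H}$. On the target, \cref{lem:hfib of G-functor} gives ${\rm hfib}_d(\abs F)^H\simeq {\rm hfib}_d(\abs{F^H})$ via the explicit path-space model in its proof. Inspecting these identifications, one sees that $\iota_d^H$ is carried to the canonical inclusion $\abs{d\!\downarrow\!F^H}\to{\rm hfib}_d(\abs{F^H})$. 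I would then check that $F^H$ satisfies the hypothesis of Quillen's Theorem B: a morphism $\alpha\colon d\to d'$ in $\mathcal D^H$ is an $H$-fixed morphism between $H$-fixed objects, so $H\leq G_d\cap G_{d'}$, and precomposition with $\alpha$ defines a functor $d'\!\downarrow\!F\to d\!\downarrow\!F$ which is $H$-equivariant precisely because $\alpha$ is $H$-fixed. By hypothesis the induced map $\abs{d'\!\downarrow\!F}\to\abs{d\!\downarrow\!F}$ is a $(G_d\cap G_{d'})$-equivalence, hence an $H$-equivalence; taking $H$-fixed points and using the identification above yields a homotopy equivalence $\abs{d'\!\downarrow\!F^H}\to\abs{d\!\downarrow\!F^H}$, which is what Theorem B for $F^H$ requires.

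With the hypothesis verified, Theorem B applied to $F^H$ shows that the inclusion of $\abs{d\!\downarrow\!F^H}$ into ${\rm hfib}_d(\abs{F^H})$ is a homotopy equivalence (using that $\id_d$ is initial in $d\!\downarrow\!\id_{\mathcal D^H}$, so that classifying space is contractible). Transporting back along the identifications of the first step, this says exactly that $\iota_d^H$ is a homotopy equivalence for all $H\leq G_d$, i.e., $\iota_d$ is a $G_d$-equivalence. The dual statement follows by the same argument with Theorem B replaced by its dual.

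The main obstacle, as flagged above, is the bookkeeping in the identification step: one must verify that the several comparisons of fixed points — of undercategories, of classifying spaces, and of the homotopy-fiber model from \cref{lem:hfib of G-functor} — are natural enough that both the map $\iota_d^H$ and the comparison maps $\abs{d'\!\downarrow\!F^H}\to\abs{d\!\downarrow\!F^H}$ are genuinely the maps produced by the non-equivariant machinery, so that Theorem B can be invoked verbatim. Everything else in the argument is formal.
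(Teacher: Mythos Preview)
Your proposal is correct and follows essentially the same approach as the paper's proof: reduce to non-equivariant Theorem~B applied to $F^H\colon\mathcal C^H\to\mathcal D^H$ for each $H\leq G_d$, using the identifications $\abs{d\!\downarrow\!F}^H=\abs{d\!\downarrow\!F^H}$ and \cref{lem:hfib of G-functor}, and checking the hypothesis of Theorem~B for $F^H$ via the containment $H\leq G_{d_1}\cap G_{d_2}$ for morphisms in $\mathcal D^H$. Your version is slightly more explicit about the naturality bookkeeping needed to identify $\iota_d^H$ with the canonical comparison map, which the paper leaves implicit.
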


\begin{proof}
For an object $d$ of $\mathcal D$, we want to show that $\abs{d\!\downarrow\!F}^H\to {\rm hfib}_d(\abs{F})^H$ is a homotopy equivalence for each $H\leq G_d$ by applying non-equivariant Theorem B to $F^H\colon \mathcal C^H\to \mathcal D^H$. Note that $d$ is always an object of $\mathcal D^H$ and \[\abs{d\!\downarrow\!F}^H=\abs{(d\!\downarrow\!F)^H}=\abs{d\!\downarrow\! (F^H)}.\]

In order to apply Theorem B, we need to know that every morphism $d_1\to d_2$ in $\cat D^H$ induces an equivalence $\abs{d_2\!\downarrow\!F^H}\to \abs{d_1\!\downarrow\!F^H}$. This equivalence holds by assumption, since if $d_1$ and $d_2$ are objects of $\mathcal D^H$, then $H\leq G_{d_1}\cap G_{d_2}$.  Hence Theorem B allows us to conclude that for any object $d$ of $\mathcal D^H$, the pullback
\[ \begin{tikzcd}
\abs{d\!\downarrow\!F^H} \ar[r]\ar[d] & \abs{\mathcal C^H}\ar[d, "F^H"] \\
\abs{d\!\downarrow\!F^H}\ar[r]  & \abs{\mathcal D^H}
\end{tikzcd} \] 
is a homotopy pullback, i.e. the inclusion $\abs{d\!\downarrow\!F^H}\to {\rm hfib}_d(\abs{F^H})$ is a homotopy equivalence. Then \cref{lem:hfib of G-functor} implies $\abs{d\!\downarrow\!F}^H\to {\rm hfib}_d(\abs{F})^H$ is an equivalence. This argument applies to any $H\leq G_d$, and therefore $\abs{d\!\downarrow\!F}\to {\rm hfib}_d(\abs{F})$ is a $G_d$-equivalence.
\end{proof}

\begin{remark}
There is a dual version of Theorem B for $F\!\downarrow\!d$, where we instead assume each morphism $d\to d'$ in $\cat D$ induces a $G_d\cap G_{d'}$-equivalence $\abs{F\!\downarrow\!d}\to \abs{F\!\downarrow\!d'}.$
\end{remark}

\begin{remark}
As is true non-equivariantly, the equivariant version Theorem B could be used to give an alternative proof of the equivariant version of Theorem A.
\end{remark}

\bibliographystyle{alpha}
\bibliography{abbr-references}

\end{document}